\setlist[description]{%
	%	topsep=30pt,               % space before start / after end of list
	%	itemsep=5pt,               % space between items
	font={\rmfamily\mdseries \dashuline}, % set the label font
	%  font={\bfseries\sffamily\color{red}}, % if colour is needed
}
\newcommand{\kk}{\Bbbk} % the field
\newcommand{\ZZ}{\mathbb{Z}}
\newcommand{\II}{\mathbb{I}}
\newcommand{\eqbydef}{\mathrel{\overset{\makebox[0pt]{\mbox{\normalfont\tiny def}}}{=}}}
\newcommand{\lto}{\longrightarrow}
\newcommand{\lmapsto}{\longmapsto}
\newcommand{\simto}{\xrightarrow{\sim}}
\newcommand{\lsimto}{\xlongrightarrow{\sim}}
\newcommand{\eps}{\varepsilon}
\newcommand{\id}[1]{\operatorname{id}_{#1}}
\newcommand{\Hom}[2]{ \operatorname{Hom}_{#1}\! \left( #2 \right) }
\newcommand{\End}[2]{\operatorname{End}_{#1}(#2)}
\newcommand{\tr}{\operatorname{tr}}
\newcommand{\ev}{\operatorname{ev}}
\newcommand{\coev}{\operatorname{coev}}
\newcommand{\ot}{\otimes}
\newcommand{\boxt}{\boxtimes}
\DeclareMathOperator*{\bigot}{\scaleobj{1}{\raisebox{2pt}{$\bigotimes$}}}
\newcommand{\lact}{\mathbin{\triangleright}}
\newcommand{\ract}{\triangleleft}
\newcommand{\lract}{\ogreaterthan}
\newcommand{\catcenter}{\operatorname{\mathcal{Z}}}
\newcommand{\reg}{{\operatorname{reg}}}
\newcommand{\tild}[1]{{\widetilde{#1}}}
\newcommand{\mopp}{{\operatorname{mop}}}
\newcommand{\opp}{{\operatorname{op}}} % die scheinbar redundanten klammern hier sind noetig.
\newcommand{\coopp}{{\operatorname{cop}}}
\newcommand{\opcop}{{\opp,\coopp}}
\newcommand{\anti}[2]{{#1^{\langle #2 \rangle}}}
\newcommand{\opj}[2]{\overline{#1}^{#2}}	%"opj" soll ein amalgam aus "opposite" und "object" sein
\newcommand{\opmod}[2]{\opj{#1}{#2}} %\newcommand{\opmod}[2]{{#1}^{* #2}}			%"opj" in category of modules
\newcommand{\dual}[2]{{{#1}^{#2}}}	% conditional dual in a pivotal category (so no distinction between left and right dual
\newcommand{\balcat}[3]{\mathcal{Z}_{#2,#3}(#1)}
\newcommand{\cat}[1]{\mathcal{#1}}
\newcommand{\A}{\cat{A}}
\newcommand{\M}{\cat{M}}
\newcommand{\X}{\cat{X}}
\newcommand{\vect}[1]{\operatorname{vect}({#1})}
\newcommand{\lmod}[1]{{#1}\!\operatorname{--mod}}
\newtheoremstyle{indented}{3pt}{3pt}{\addtolength{\leftskip}{5.5em}}{}{\bfseries}{.}{.5em}{}
\theoremstyle{plain}
\newcounter{dummy} %nur für scrbook: %\numberwithin{dummy}{chapter}
\newtheorem{theorem}[dummy]{Theorem}
\newtheorem{proposition}[dummy]{Proposition}
\newtheorem{lemma}[dummy]{Lemma}
\theoremstyle{definition}
\newtheorem{definition}[dummy]{Definition}
\theoremstyle{remark}
\newtheorem{remark}[dummy]{Remark}
\newtheorem{remarks}[dummy]{Remarks}
\newtheorem{example}[dummy]{Example}
\newtheorem{examples}[dummy]{Examples}
\theoremstyle{indented}
\title{%
	\begin{flushright}
		\vspace{-1.8cm} \normalfont{\small{\textsf{[ZMP-HH/20-1]}}}\\
		\vspace{-0.5cm} \normalfont{\small{\textsf{Hamburger Beiträge zur Mathematik Nr.\! 819}}}\\
		%\vspace{-0.5cm} \normalfont{\small{\textsf{January 2020}}}
	\end{flushright}
	\vspace{0.5cm}
	Defects in Kitaev models and bicomodule algebras
}
\author{%
	Vincent Koppen
	\\[0.25cm]
	%\hspace{-1.2cm}
	{\normalsize\slshape Fachbereich Mathematik, Universit\"{a}t Hamburg, Germany}\\%[-0.25cm]
%	{\normalsize\slshape Universit\"{a}t Hamburg}\\[-0.25cm]
%	{\normalsize\slshape Germany}\\%[-0.75cm]	
	%\hspace{-1.8cm}
	\normalsize{\texttt{\href{mailto:vincent.koppen@uni-hamburg.de}{vincent.koppen@uni-hamburg.de}}} %\\[0.3cm]
}
\date{}%\footnotesize \today \\[-0.75cm]}
\begin{document}
%------PLACEHOLDER-DEFINITIONS-------------------------------------------------------
\def\balhopf[#1]#2{ {\color{brown} \mathsf{H^*_{#1,#2}} } }
\newcommand{\balalg}[2]{A_{#1, #2}}
\newcommand{\balalghopf}[3]{{{#1}^*_{#2, #3}}}
\newcommand{\vtxal}[2]{{#1 \lract #2}} %  placeholder für \drinfelddouble(A,K), A\lract K
\newcommand{\statespace}{\mathscr{H}}

\newcommand{\halfedges}[1]{{\Sigma_{#1}^{0.5}}} % or notation $\Sigma_v$ for the vertex neighborhood, consisting of half-edges and vertex sites thought of as a circle with marked points oder so?
\newcommand{\plaqedges}[1]{{\Sigma_{#1}^{1.5}}} %\newcommand{\plaqedges}[1]{{\Sigma_{#1}^{\xfrac{3}{2}}}}
\newcommand{\vtxsites}[1]{{\Sigma_{#1}^{\text{sit}}}}
\newcommand{\plaqsites}[1]{{\Sigma_{#1}^{\text{sit}}}}

\newcommand{\edges}[1]{{\Sigma_{#1}^1}}
\newcommand{\plaquettes}[1]{{\Sigma_{#1}^2}}
\newcommand{\vertices}[1]{{\Sigma_{#1}^0}}

\newcommand{\vertexedge}[2]{{\eps({#2})}} % relative orientation of the edge #2 relative to vertex #1: =-1 iff edge points towards vertex % \sigma is also a possible notation
\newcommand{\plaqedge}[2]{\eps_{#1}({#2})} % relative orientation of the edge #2 relative to plaquette #1: =-1 iff edge points counter-clockwise around plaquette

%-------------------------------------------------------------------------------------

\maketitle

\begin{abstract}
We construct a Kitaev model, consisting of a Hamiltonian which is
the sum of commuting local projectors, for surfaces with boundaries and defects of dimension 0 and 1.
More specifically, we show that one can consider cell decompositions of surfaces whose 2-cells are labeled by semisimple Hopf algebras and 1-cells are labeled by semisimple bicomodule algebras.
We introduce an algebra whose representations label the 0-cells and which reduces to the Drinfeld double of a Hopf algebra in the absence of defects.
In this way we generalize the algebraic structure underlying the standard Kitaev model without defects or boundaries, where all 1-cells and 2-cells are labeled by a single Hopf algebra and where point defects are labeled by representations of its Drinfeld double.
In the standard case, commuting local projectors are constructed using the Haar integral for semisimple Hopf algebras.
A central insight we gain in this paper is that in the presence of defects and boundaries, the suitable generalization of the Haar integral is given by the unique symmetric separability idempotent for a semisimple (bi-)comodule algebra.
\end{abstract}

%\tableofcontents

\section{Introduction}

The Kitaev model has been constructed as a simple model
for topological quantum computing, using a degenerate ground-state space as the code space and a set of commuting local projectors to correct local errors.
It is also known as the quantum double model, surface code or toric code \cite{kitaev, buerschaperEtAl}.
The algebraic input datum for such a construction is, in the simplest situation, a finite-dimensional semisimple complex Hopf algebra \cite{buerschaperEtAl, meusburger}; for the toric code it is the group algebra of the group with two elements.
The ground states of this model are described by a three-dimensional topological field theory of Turaev-Viro type \cite{balKir}, which provides links to quantum topology.

On the other hand, it is interesting to consider such models not just on surfaces, but on surfaces with additional structure.
In terms of physics, we want to allow for defects and boundaries; in mathematical terms, we consider the theories on a suitable class of stratified manifolds called \emph{defect surfaces} in the sense of \cite{fss}, but see also e.g.\! \cite{carqueville-etal}.
(Here we study models on oriented surfaces, whereas in \cite{fss} surfaces with $2$-framings are considered.)
Defects in topological field theories are known to lead to higher-dimensional ground-state spaces and more interesting mapping class group representations of the underlying surfaces on these; see e.g.\! \cite{barkeshli, bilayer, laubscherEtAl}.
This is, in particular, relevant for applications to topological quantum computing, where quantum gates are implemented by mapping class group actions on the code space \cite{freedman-larsen-wang}.
There have been already several approaches to include defects or boundaries in Kitaev models based on group algebras \cite{bravyi-kitaev, bombin+martin-delgado, beigi-shor-whalen, cong-cheng-wang}, but our approach deals with the more general case of semisimple Hopf algebras.

The main result
of this paper is the construction of a Kitaev-type model, consisting of a commuting-projector Hamiltonian, for surfaces with general defects and boundaries, using general Hopf-algebraic and representation-theoretic
input data.

\medskip

For our construction it is necessary to realize the data labeling the defects,
which are known for Turaev-Viro theory in a category-theoretic language,
concretely in Hopf-algebraic and re\-pre\-sen\-tation-theoretic terms.
Specifically, topological field theories of Turaev-Viro type are parameterized by spherical fusion categories \cite{barWest-invariants}.
The data for defects separating two such theories are semisimple bimodule categories \cite{kitaevKong, fsv12, fss}.
The idea for obtaining the data for a Kitaev construction is to invoke Tannaka-Krein duality \cite{deligne}.
It states that a semisimple Hopf algebra is equivalent to specifying a fusion category (the representation category of the Hopf algebra, admitting a canonical spherical structure) together with a monoidal fibre functor valued in finite-dimensional vector spaces (the forgetful functor assigning to a representation its underlying vector space).
This recovers semisimple Hopf algebras as the input datum for the Kitaev models without defects, which we think of here as the labels for the two-dimensional strata of the defect surface.

We extend this idea and employ, for the bimodule categories labelling line defects on the surface in Turaev-Viro theory, the appropriate bimodule versions of fibre functors.
By a bimodule version of Tannaka-Krein duality, which we explain
in Subsection \ref{subsec:bicomodule-algebras},
this realizes these categories as the representation categories of bicomodule algebras over Hopf algebras.
We thus identify bicomodule algebras as the labels for line defects and, as a special case, comodule algebras for boundaries.

Having established the algebraic data for line defects of the surface, we
turn our attention to vertices where such line defects can join.
They are labeled by objects in a category which serves as possible labels for generalized Wilson lines in a corresponding three-dimensional topological field theory, including boundary Wilson lines and Wilson lines at the intersection of surface defects.
This category has been determined as a suitable generalization \cite{fss-trace, fss} of the Drinfeld center for a spherical fusion category, which labels bulk Wilson lines.
Here, in Subsection \ref{subsec:algebra-at-vertex}, this category is realized as a representation category as follows:
For a vertex at which line defects meet, the bicomodule algebras of the line defects and the algebras dual to the Hopf algebras attached to the adjacent two-dimensional strata naturally assemble into an algebra, defined in Definition \ref{def:vertex-algebra}.
This algebra, which in this paper we call \emph{vertex algebra},
generalizes the Drinfeld double of the Hopf algebra, whose representations label point-like excitations in the Kitaev model without defects.
The category of possible labels for such a vertex is then the category of modules over this algebra.
Theorem \ref{thm:gluing-category-as-representation-category_wo-proof}, which we prove in Appendix \ref{app:gluing-category}, states that this category is equivalent to the category of generalized Wilson lines at the intersection of surface defects in a corresponding three-dimensional field theory \cite{fss}.

\medskip 

Furthermore, a choice of cell decomposition on the underlying surface enters the construction of the Kitaev model.
In the standard Kitaev model without defects, every $1$-cell (or \emph{edge}) of the cell decomposition is labeled by a single Hopf algebra.
In our setting this should be seen as the regular bicomodule algebra and we consider this label as the \emph{transparent}
defect.
In our case, edges of the cell decomposition are either transparently labeled or they constitute a non-trivial defect and are labeled by an arbitrary bicomodule algebra.

\medskip

Our construction proceeds in the following steps -- mirroring the construction of the standard Kitaev model without defects, as in e.g.\ \cite{buerschaperEtAl, balKir}.
We first define in Subsection \ref{subsec:state-space} a vector space with local degrees of freedom for each edge and each $0$-cell (or \emph{vertex}) of the cell decomposition.
Then we show in Subsection \ref{subsec:local-reps-on-state-space} that this vector space admits, locally with respect to the cell decomposition, the structure of a bimodule over the algebras attached to the vertices.
This is analogous to the representations of the Drinfeld double for each site, a pair of a vertex and an adjacent 2-cell (or \emph{plaquette}), in the standard Kitaev model without defects.
In this case one then proceeds to use the Haar integral for any semisimple Hopf algebra to define local projectors via these local representations.
One of our main insights, established in Subsection \ref{subsec:symm-sep-idem}, is that, in the presence of defects, the suitable generalization of the Haar integral to semisimple bicomodule algebras is given by the symmetric separability idempotent, see Definition \ref{def:symm-sep-idem}.
The symmetric separability idempotent of a semisimple algebra is unique, which we recall in Proposition \ref{prop:symm-sep-idem-unique}.
Furthermore, we show in Proposition \ref{prop:symm-sep-idem-coinvariant} that for a semisimple (bi-)comodule algebra, the symmetric separability idempotent satisfies a compatibility with the (bi-)comodule structure which generalizes a basic property of the Haar integral of a semisimple Hopf algebra.
In the absence of defects, the symmetric separability idempotent reduces to the Haar integral, as we show in Example \ref{ex:haar-int-as-symm-sep-idem}.

Using such separability idempotents, in Subsection \ref{subsec:vertex-plaquette-operators} we finally construct projectors for each vertex,
as usual called \emph{vertex operators}, and for each plaquette,
as usual called \emph{plaquette operators}.
Our main result, Theorem \ref{thm:vertex-and-plaquette-operators-commute}, is that all vertex operators and plaquette operators commute -- giving rise to an exactly solvable Hamiltonian defined as a sum of commuting projectors, which project to the ground states of the model.

\medskip
Concerning the ground states, our construction can be seen as a concrete representation-theoretic realization of the category-theoretic construction in \cite{fss}.
While in \cite{fss} more general categories than representation categories of Hopf algebras and bicomodule algebras are considered, for us the additional structure of fibre functors on the categories is necessary in order to define a larger vector space which contains the pre-block space and block space
as subspaces.
Moreover, while for the construction in \cite{fss} no semisimplicity is required, in this paper semisimplicity is essential for the construction of commuting local projectors, since we define them in terms of the symmetric separability idempotents.
(See \cite{kms} for some progress on projectors for non-semisimple Hopf algebras.)
Lastly, since semisimple Hopf algebras have an involutive antipode, they have a canonical trivial pivotal structure.
Hence, we can define our model on any surface with orientation.

\begin{section}*{Acknowledgments}
I would like to thank my Ph.D.\ advisor Christoph Schweigert for introducing to me the topic of the paper and for many helpful discussions and valuable advice and feedback.
Furthermore I am grateful to Ehud Meir, Catherine Meusburger and Thomas Voß for fruitful discussions and to Vincentas Mulevičius for help with the figures.
The author is partially supported by the RTG 1670 ``Mathematics inspired by String theory and Quantum Field Theory'' and by the Deutsche Forschungsgemeinschaft (DFG, German Research Foundation) under Germany’s Excellence Strategy -- EXC 2121 ``Quantum Universe'' -- 390833306.
\end{section}

\section{Hopf-algebraic and representation-theoretic labels for surfaces with cell decomposition}

Following the discussion in the introduction, we will explain in this section the input data for our construction.

\medskip

Let $\Sigma$ be a compact oriented surface together with a cell decomposition $(\Sigma^0, \Sigma^1, \Sigma^2)$ with non-empty sets of $0$-cells (or \emph{vertices}), $1$-cells (or \emph{edges}) and $2$-cells (or \emph{plaquettes}), respectively.
This can be thought of as an
embedding of a graph $(\Sigma^0,\Sigma^1)$ into $\Sigma$ such that its complement in $\Sigma$ is the disjoint union of a set $\Sigma^2$ of disks.
Furthermore, let the edges be oriented, i.e.\ there are source and target maps $s, t : \Sigma^1 \lto \Sigma^0$.
If the surface $\Sigma$ has a boundary, then we require that the 1-skeleton of the cell decomposition be contained in the boundary.

For the construction of a Kitaev model one needs as a further input Hopf-alge\-bra\-ic and rep\-re\-sen\-ta\-tion-theo\-re\-tic data labelling the various strata of the cell decomposition.
In the ordinary Kitaev model without defects as in \cite{buerschaperEtAl}, all edges of the cell decomposition are labeled by a single semisimple Hopf algebra $H$, and wherever point-like excitations are considered \cite{balKir}, a vertex is labeled by a
representation of the Drinfeld double $D(H)$ of the Hopf algebra $H$.
In this paper we consider more general labels for the edges, thereby implementing arbitrary line defects (also known as \emph{domain walls} in condensed matter theory) and boundaries in the Kitaev model.
Accordingly we also consider more general labels for vertices, implementing point defects (also known as \emph{point-like excitations}) inside defect lines or boundaries.
For the remainder of this section we will specify the three types of Hopf-algebraic and representation-theoretic data that label the plaquettes, edges and vertices of a cell decomposition.

\subsection{Bicomodule algebras over Hopf algebras for line defects}\label{subsec:bicomodule-algebras}

We fix once and for all an algebraically closed field $\kk$ of characteristic zero.
For the necessary background on Hopf algebras and conventions regarding the notation, see \cite{montgomery, kassel, buerschaperEtAl}.

\begin{definition}\label{def:bicomodule-algebra} \ 
\begin{itemize}
\item
Let $H_1$ and $H_2$ be Hopf algebras over $\kk$.
An \emph{$H_1$-$H_2$-bicomodule algebra $K$} is a $\kk$-algebra $K$ together with an $H_1$-$H_2$-bicomodule structure, i.e.\ with co-associative co-action written in Sweedler notation for comodules as
\begin{align*}
K &\lto H_1 \ot K \ot H_2,	\\
k &\lmapsto k_{(-1)} \ot k_{(0)} \ot k_{(1)},
\end{align*}
which is required to be a morphism of algebras.
If $H_1 = \kk$ or $H_2 = \kk$, then $K$ is just a right $H_2$-comodule or a left $H_1$-comodule algebra, respectively.

A \emph{semisimple bicomodule algebra} is a bicomodule algebra whose underlying algebra is semisimple.

\item 
Let $\Sigma$ be an oriented surface with a cell decomposition with oriented edges.
A \emph{label $H_p$ for a plaquette} $p \in \Sigma^2$ is a semisimple Hopf algebra $H_p$ over $\kk$.

For any edge $e \in \Sigma^1$ let $p_1 \in \Sigma^2$ and $p_2 \in \Sigma^2$ be the labelled plaquettes on the left and on the right of $e$, respectively, with respect to the orientation of $e$ relative to the orientation of $\Sigma$.
Then a \emph{label $K_e$ for the edge} $e$ is a finite-dimensional semisimple $H_{p_1}$-$H_{p_2}$-bicomodule algebra $K_e$ over $\kk$.
\begin{figure}[H]
	\centering
	\def\svgwidth{80mm} %% Creator: Inkscape inkscape 0.92.3, www.inkscape.org
%% PDF/EPS/PS + LaTeX output extension by Johan Engelen, 2010
%% Accompanies image file '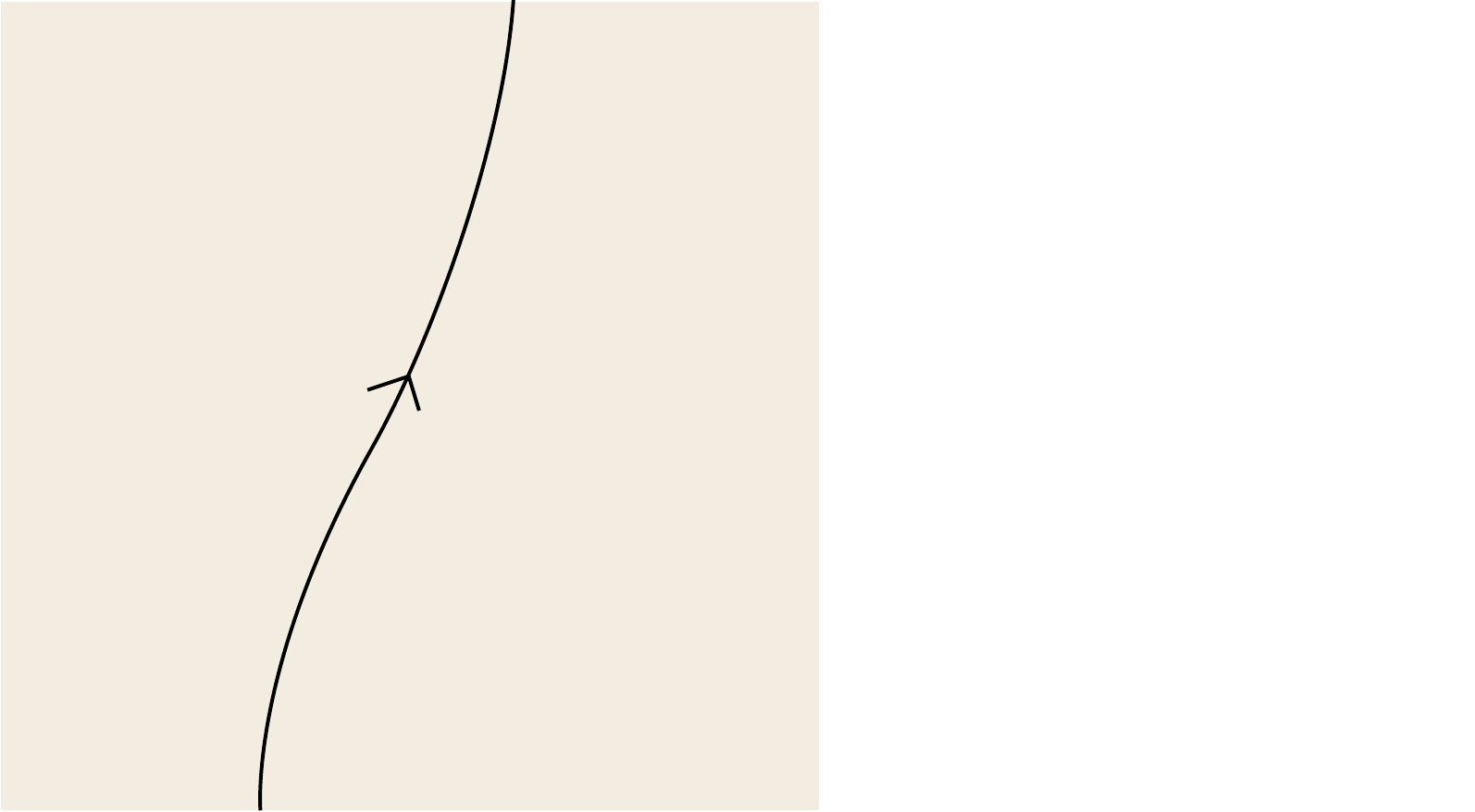' (pdf, eps, ps)
%%
%% To include the image in your LaTeX document, write
%%   \input{<filename>.pdf_tex}
%%  instead of
%%   \includegraphics{<filename>.pdf}
%% To scale the image, write
%%   \def\svgwidth{<desired width>}
%%   \input{<filename>.pdf_tex}
%%  instead of
%%   \includegraphics[width=<desired width>]{<filename>.pdf}
%%
%% Images with a different path to the parent latex file can
%% be accessed with the `import' package (which may need to be
%% installed) using
%%   \usepackage{import}
%% in the preamble, and then including the image with
%%   \import{<path to file>}{<filename>.pdf_tex}
%% Alternatively, one can specify
%%   \graphicspath{{<path to file>/}}
%% 
%% For more information, please see info/svg-inkscape on CTAN:
%%   http://tug.ctan.org/tex-archive/info/svg-inkscape
%%
\begingroup%
  \makeatletter%
  \providecommand\color[2][]{%
    \errmessage{(Inkscape) Color is used for the text in Inkscape, but the package 'color.sty' is not loaded}%
    \renewcommand\color[2][]{}%
  }%
  \providecommand\transparent[1]{%
    \errmessage{(Inkscape) Transparency is used (non-zero) for the text in Inkscape, but the package 'transparent.sty' is not loaded}%
    \renewcommand\transparent[1]{}%
  }%
  \providecommand\rotatebox[2]{#2}%
  \newcommand*\fsize{\dimexpr\f@size pt\relax}%
  \newcommand*\lineheight[1]{\fontsize{\fsize}{#1\fsize}\selectfont}%
  \ifx\svgwidth\undefined%
    \setlength{\unitlength}{459.02310674bp}%
    \ifx\svgscale\undefined%
      \relax%
    \else%
      \setlength{\unitlength}{\unitlength * \real{\svgscale}}%
    \fi%
  \else%
    \setlength{\unitlength}{\svgwidth}%
  \fi%
  \global\let\svgwidth\undefined%
  \global\let\svgscale\undefined%
  \makeatother%
  \begin{picture}(1,0.55024897)%
    \lineheight{1}%
    \setlength\tabcolsep{0pt}%
    \put(0,0){\includegraphics[width=\unitlength,page=1]{fig__edge_label.pdf}}%
    \put(0.22231,0.10550756){\color[rgb]{0,0,0}\makebox(0,0)[t]{\lineheight{1.25}\smash{\begin{tabular}[t]{c}$e$\end{tabular}}}}%
    \put(0.11867377,0.31884875){\color[rgb]{0,0,0}\makebox(0,0)[t]{\lineheight{1.25}\smash{\begin{tabular}[t]{c}$p_1$\end{tabular}}}}%
    \put(0.41931223,0.16432818){\color[rgb]{0,0,0}\makebox(0,0)[t]{\lineheight{1.25}\smash{\begin{tabular}[t]{c}$p_2$\end{tabular}}}}%
    \put(0.68498008,0.48677787){\color[rgb]{0,0,0}\makebox(0,0)[lt]{\lineheight{1.25}\smash{\begin{tabular}[t]{l}$H_{p_1}$ : Hopf algebra\end{tabular}}}}%
    \put(0.68293936,0.12849174){\color[rgb]{0,0,0}\makebox(0,0)[lt]{\lineheight{1.25}\smash{\begin{tabular}[t]{l}$H_{p_2}$ : Hopf algebra\end{tabular}}}}%
    \put(0,0){\includegraphics[width=\unitlength,page=2]{fig__edge_label.pdf}}%
    \put(0.68466158,0.30741148){\color[rgb]{0,0,0}\makebox(0,0)[lt]{\lineheight{1.25}\smash{\begin{tabular}[t]{l}$K_e$ : $H_{p_1}$-$H_{p_2}$-bicomodule algebra\end{tabular}}}}%
    \put(0,0){\includegraphics[width=\unitlength,page=3]{fig__edge_label.pdf}}%
  \end{picture}%
\endgroup%

	\caption{An edge $e$ and the adjacent plaquettes $p_1$ and $p_2$ with their algebraic data. The two arrows denote the orientations of the edge and, respectively, of the surface $\Sigma$ into which the edge is embedded.}
	\label{fig:edge-labelling}
\end{figure}
If the edge $e$ lies in the boundary of $\Sigma$ and hence only has a plaquette $p$ on one side (left or right), then $K_e$ is just a left or right $H_p$-comodule algebra, respectively.
\end{itemize}
\end{definition}
\begin{examples} \ \begin{enumerate}[ref={\theremark.(\arabic*)}]
\item \label{ex:reg-bicomodule-alg}
Let $H$ be a Hopf algebra.
The \emph{regular} $H$-bicomodule algebra is the algebra underlying the Hopf algebra $H$ together with left and right co-action given by the co-multiplication of $H$.
Note that the regular $H$-bicomodule algebra is semisimple if and only if the Hopf algebra $H$ is semisimple, since both are defined by the semisimplicity of the underlying algebra.
\item
Let $G$ be a finite group and $\kk G$ its group algebra, which has a basis $(b_g)_{g\in G}$ parametrized by $G$ and multiplication induced by the group multiplication.
$\kk G$ is a semisimple Hopf algebra with comultiplication given by the diagonal map $b_g \mapsto b_g \ot b_g$ for all $g\in G$.
Further, let $U \subseteq G$ be a subgroup and $\zeta \in Z^2(U,\kk^\times)$ a group $2$-cocycle.
Then the cocycle-twisted group algebra $\kk U_\zeta$ with multiplication $b_u \cdot b_v := \zeta(u,v) b_{uv}$ for all $u,v \in U$ is a $\kk G$-comodule algebra with co-action given by the diagonal map $b_u \mapsto b_u \ot b_u$.
\end{enumerate}
\end{examples}

\subsubsection{Tannaka-Krein duality: a category-theoretic motivation for bicomodule algebras} \label{subsubsec:tannaka-duality}

Let us explain the emergence of bicomodule algebras from the point of view of Tannaka-Krein duality, as outlined in the Introduction.
We thereby relate the algebraic input data for our construction, as defined in Definition \ref{def:bicomodule-algebra}, to the category-theoretic data for the state-sum construction of a modular functor in \cite{fss}.
For the relevant category-theoretic notions and background, see e.g.\ \cite{egno}.

First of all, for a finite-dimensional Hopf algebra $H$ over $\kk$, it is well known that the category $\lmod{H}$ of finite-dimensional left $H$-modules is a finite $\kk$-linear tensor category.
This tensor category comes equipped with a forgetful functor $\lmod{H} \lto \vect{\kk}$ into the tensor category of finite-dimensional vector spaces.
The forgetful functor is monoidal, exact and faithful.

In fact, it is known \cite{egno} that the datum of a finite-dimensional Hopf algebra $H$ over $\kk$ is equivalent to the datum of a finite $\kk$-linear tensor category $\A$ together with a monoidal fiber functor $\omega : \A \lto \vect{\kk}$, i.e.\! an exact and faithful $\kk$-linear tensor functor to the category of finite-dimensional vector spaces.
More precisely, the Hopf algebra $H$ can be reconstructed as the algebra of natural endo-transformations of the fiber functor $\omega$ and the tensor structure on the fiber functor $\omega$ induces the additional coalgebra structure on the algebra $H$, such that $\A \cong \lmod{H}$ as tensor categories.

We extend this idea to bimodule categories as follows.
For a finite-dimensional $H_1$-$H_2$-bicomodule algebra $K$ for Hopf algebras $H_1$ and $H_2$, the category $\lmod{K}$ has the structure of an $(\lmod{H_1})$-$(\lmod{H_2})$-bimodule category in a natural way.
Indeed, if $X_1$ is an $H_1$-module, $X_2$ is an $H_2$-module and $M$ is a $K$-module, then $X_1 \lact M \ract X_2 := X_1 \ot_\kk M \ot_\kk X_2$ becomes a $K$-module by pulling back the natural $(H_1 \ot K \ot H_2)$-action on it along the co-action map $K \lto H_1 \ot K \ot H_2$ that belongs to $K$.

On the other hand, let $(\A_1, \omega_1 : \A_1 \lto \vect{\kk})$ and $(\A_2, \omega_2 : \A_2 \lto \vect{\kk})$ be finite $\kk$-linear tensor categories together with monoidal fiber functors.
Consider $\vect{\kk}$ as an $\A_1$-$\A_2$-bimodule category via the monoidal functors $\omega_1$ and $\omega_2$.
Let $\M$ be a finite $\kk$-linear $\A_1$-$\A_2$-bimodule category.
Then we define a \emph{bimodule fiber functor} $\omega : \M \lto \vect{\kk}$ for $\M$ to be an exact and faithful $\A_1$-$\A_2$-bimodule functor from $\M$ to the category of finite-dimensional vector spaces.
Let $H_1$ and $H_2$ be the corresponding finite-dimensional Hopf algebras over $\kk$ corresponding to $(\A_1, \omega_1)$ and $(\A_2, \omega_2)$.
Then, by the same argument as for tensor categories \emph{mutatis mutandis}, the bimodule structure on the fiber functor $\omega$ induces the structure of an $H_1$-$H_2$-bicomodule algebra $K$ on the algebra of natural endo-transformations of $\omega$, such that $\omega$ induces an equivalence of bimodule categories $\M \cong \lmod{K}$.

\medskip

\noindent Hence, we conclude that bicomodule algebras emerge naturally as the algebraic input data for Kitaev models, if one follows the following idea in order to obtain concrete Hopf-algebraic data:
Take the category-theoretic data underlying the corresponding topological field theories or modular functors, which are tensor categories and bimodule categories \cite{fss,kitaevKong}, and equip them with fiber functors of the appropriate type.

\subsection{Algebraic structure at half-edges and sites}
\label{subsec:half-edges-and-sites}

It remains to determine the possible labels for the vertices of the cell decomposition.
This is the content of Subsection \ref{subsec:algebra-at-vertex}.
Before that, in this Subsection \ref{subsec:half-edges-and-sites}, we first introduce suitable notation and terminology in order to extract and conveniently speak about the combinatorial information contained in the cell decomposition.

\medskip

Fix a vertex $v \in \Sigma^0$.
Then let $\halfedges{v}$ be the set of \emph{half-edges incident to $v$}.
This is the set of incidences of an edge with the given vertex $v \in \Sigma^0$.
(A loop at $v$ yields two half-edges incident to $v$.)
Note that we have a map $\halfedges{v} \lto \Sigma^1$, assigning to any half-edge its underlying edge, which is in general not injective due to the possible existence of loops.
We will denote by $\edges{v}$ its image in $\Sigma^1$, that is the set of edges starting or ending at the given vertex $v$.

We will say that $e \in \halfedges{v}$ is \emph{directed away from} $v \in \Sigma^0$ if $v = s(e)$ and, that $e \in \halfedges{v}$ is \emph{directed towards} $v \in \Sigma^0$ if $v = t(e)$.
Then for any half-edge $e \in \halfedges{v}$ incident to the vertex $v \in \Sigma^0$, let the sign $\vertexedge{v}{e} \in \{ +1, -1 \}$ be
positive if the half-edge $e \in \halfedges{v}$ is directed away from the vertex $v$:
\begin{figure}[H]
\begin{minipage}[c]{0.25\textwidth}
	\centering
	\begin{flushright}
	\def\svgwidth{15mm}
	\input{fig__half-edge-pos__fixed.pdf_tex}
	\end{flushright}
\end{minipage}\hfill
\begin{minipage}[c]{0.73\textwidth}
	\caption{A half-edge $e\in\halfedges{v}$ incident to $v$ with sign $\vertexedge{v}{e} := +1$} \label{fig:positive-half-edge}
\end{minipage}
\end{figure}
and negative if $e \in \halfedges{v}$ is directed towards $v$:
\begin{figure}[H]
	\begin{minipage}[c]{0.25\textwidth}
		\centering
		\begin{flushright}
		\def\svgwidth{15mm}
		\input{fig__half-edge-neg__fixed.pdf_tex}
		\end{flushright}
	\end{minipage}\hfill
	\begin{minipage}[c]{0.73\textwidth}
		\caption{A half-edge $e\in\halfedges{v}$ incident to $v$ with sign $\vertexedge{v}{e} := -1$} \label{fig:negative-half-edge}
	\end{minipage}
\end{figure}
Let $p\in\Sigma^2$ be the plaquette on the left of the half-edge $e\in\halfedges{v}$, as seen from the vertex $v\in\Sigma^0$, and let $p' \in \Sigma^2$ be the plaquette on the right, as in Figure \ref{fig:half-edge-label}.
\begin{figure}[H]
	\centering
	\def\svgwidth{40mm}
	\input{fig__half-edge__fixed.tex}
	\caption{A half-edge $e$ at $v$ with neighboring plaquettes $p$ and $p'$}
	\label{fig:half-edge-label}
\end{figure}
What we have not represented in the figure is that the half-edge $e$ comes with an orientation, expressed by the sign $\eps := \vertexedge{v}{e}$.
By our assignment of labels, if the half-edge $e$ is directed away from the vertex $v$, i.e.\ $\eps = +1$, then it is labeled with an $H_p$-$H_{p'}$-bicomodule algebra $K_e$, with co-action written in Sweedler notation for comodules:
\[ \begin{array}{rll}
K_e &\lto &H_p \ot K_e \ot H_{p'} \\
k &\lmapsto &k_{(-1)} \ot k_{(0)} \ot k_{(1)}
\end{array} \bigg\} \text{ if } \vertexedge{v}{e} = +1. \]
If, on the other hand, the half-edge $e$ points towards $v$, that is $\eps = -1$, then $K_e$ is an $H_{p'}$-$H_p$-bicomodule algebra:
\[ \begin{array}{rll}
K_e &\lto &H_{p'} \ot K_e \ot H_p \\
k &\lmapsto &k_{(-1)} \ot k_{(0)} \ot k_{(1)}
\end{array} \bigg\} \text{ if } \vertexedge{v}{e} = -1. \]
We shall introduce notation that allows us to treat both cases $\eps=+1$ and $\eps=-1$ at once.
Let
\begin{align*}
K_e^{+1} &:= K_e	\\
K_e^{-1} &:= K_e^{\opp},
\end{align*}
where $K_e^\opp$ is the algebra with opposite multiplication.
Moreover, let
\begin{align*}
H_p^{+1} &:= H_p,	\\
H_p^{-1} &:= H_p^{\opp\coopp},
\end{align*}
where $H_p^{\opp\coopp}$ is the Hopf algebra with opposite multiplication and opposite comultiplication.
If $K_e$ is a left (or right, respectively) $H_p$-comodule algebra, then $K_e^{-1}$ is canonically a left (or right, respectively) $H_p^{-1}$-comodule algebra.

Hence, in both above cases we can write that $K_e^\eps$ is an $H_p^\eps$-$H_{p'}^\eps$-bicomodule algebra, with co-action in Sweedler notation:
\begin{align*}
	K_e^\eps &\lto H_p^\eps \ot K_e^\eps \ot H_{p'}^\eps, \\
	k &\lmapsto k_{(-\eps)} \ot k_{(0)} \ot k_{(\eps)}.
\end{align*}

\medskip
Denote by $\vtxsites{v}$ the set of \emph{sites incident to $v$}. These are incidences of a plaquette $p \in \Sigma^2$ with the given vertex $v \in \Sigma^0$.
(Note that a plaquette $p \in \Sigma^2$ can have two separate incidences with the vertex $v$.
This happens when an edge in its boundary is a loop.)
Dually, for a plaquette $p\in\Sigma^2$ denote by $\plaqsites{p}$ the set of \emph{sites incident to $p$}. These are incidences of a vertex $v \in \Sigma^0$ with the given plaquette $p$.
It is justified to use the name \emph{site} for both notions:
To any site $p \in\vtxsites{v}$ at a vertex $v\in\Sigma^0$ corresponds a unique site $\tild{v} \in \plaqsites{p}$ with underlying vertex $v$ at the plaquette that underlies the site $p\in\vtxsites{v}$.

Now let $p \in \vtxsites{v}$ be such a site at the vertex $v\in \Sigma^0$.
There is a half-edge $e'_p \in \halfedges{v}$ bounding $p$ on the left as seen from the vertex $v$ and there is a half-edge $e_p \in \halfedges{v}$ bounding $p$ on the right.
For an example consider Figure \ref{fig:site}.
\begin{figure}[h!]
	\centering
	\def\svgwidth{40mm}
	\input{fig__site__fixed.pdf_tex}
	\caption{A site $p\in\vtxsites{v}$ with neighboring half-edges $e'_p$ and $e_p$.}
	\label{fig:site}
\end{figure}

Then, in consideration of the respective signs $\eps := \vertexedge{v}{e_p}$ and $\eps' := \vertexedge{v}{e'_p}$ of the half-edges $e_p$ and $e'_p$, we have by our assignment of labels that $K_{e'_p}^{\eps'}$ is a right $H_p^{\eps'}$-comodule algebra and that $K_{e_p}^\eps$ is a left $H_p^\eps$-comodule algebra.
In other words, we have a left $((H_p^{\eps'})^\coopp \ot H_p^\eps)$-comodule structure on the algebra
\begin{equation}\label{eq:site-bicomodule-algebra}
K_{\{e_p,e'_p\}} := \bigot_{e \in \{e_p,e'_p\}\subseteq\halfedges{v}} K_e^{\vertexedge{v}{e}} =
\begin{cases}
K_{e'_p}^{\eps'} \ot K_{e_p}^\eps, & e_p\neq e'_p\in\halfedges{v} \\
K_{e_p}^\eps, & e_p = e'_p \in\halfedges{v}
\end{cases}.
\end{equation}

Next we introduce, for a fixed site $p \in \vtxsites{v}$, a canonical left $((H_p^{\eps'})^\coopp \ot H_p^\eps)$-module algebra,
which we think of as associated to the site $p$:
\begin{definition}	\label{def:balancing-algebra-hopf}
Let $v\in\Sigma^0$ be a vertex and $p\in\vtxsites{v}$ a site at $v$ with neighboring half-edges $e_p, e'_p\in\halfedges{v}$ with signs $\eps, \eps' \in \{+1, -1\}$ as before.

The \emph{$\eps'$-$\eps$-balancing algebra $H_p^*$}, or more explicitly $(H_p)_{(\eps',\eps)}^*$, is the left $((H_p^{\eps'})^\coopp \ot H_p^\eps)$-module algebra, whose underlying $\kk$-algebra is the dual algebra of the Hopf algebra $H_p$,
with the following action.
\begin{align*}
((H_p^{\eps'})^\coopp \ot H_p^\eps) \ot H_p^* &\lto H_p^*, \\
a' \ot a \ot f &\lmapsto f(\anti{a'}{-\eps'}\cdot ? \cdot \anti{a}{\eps}),
\end{align*}
where
\[ \anti{a}{\eps} :=
\begin{Bmatrix}
a, & \eps = +1 \\
S(a), & \eps = -1
\end{Bmatrix} \quad \text{ for all } a \in H_p \]
and where $S : H_p \lto H_p$ denotes the antipode.
\end{definition}

Together, the $((H_p^{\eps'})^\coopp \ot H_p^\eps)$-comodule algebra $K_{\{e_p,e'_p\}}$, associated to the half-edges $e_p\in \halfedges{v}$ and $e'_p\in \halfedges{v}$, and the $((H_p^{\eps'})^\coopp \ot H_p^\eps)$-module algebra $H_p^*$, associated to the site $p \in \vtxsites{v}$ situated between the edges $e_p$ and $e'_p$, can be \emph{coupled} into a single $\kk$-algebra,
denoted by
\begin{equation} \label{eq:site-algebra}
H_p^* \lract K_{\{e_p,e'_p\}}
\end{equation}
which has underlying vector space $H_p^* \ot K_{\{e_p,e'_p\}}$ and which is an instance of the following general construction.
For related constructions see \cite{montgomery}.
\begin{definition} \label{def:crossed-product-algebra}
Let $H$ be a Hopf algebra over $\kk$, let $A$ be a left $H$-module algebra and let $K$ be a left $H$-comodule algebra.
Then the
\emph{crossed product algebra} \( \vtxal{A}{K} \) is the $\kk$-algebra with underlying vector space $A\ot K$ and multiplication
\[ (a \ot k) \cdot (a' \ot k') := a (k_{(-1)}.a') \ot k_{(0)} k' \quad\text{ for } (a\ot k), (a'\ot k') \in A \ot K . \]
\end{definition}
In particular, the algebra $\vtxal{H_p^*}{K_{\{e_p,e'_p\}}}$ contains $H_p^*$ and $K_{\{e_p,e'_p\}}$ as subalgebras and the commutation relation between these is
\begin{equation} \label{eq:straightening-formula}
k \cdot f = f(\anti{{k}_{(\eps')}}{-\eps'}\cdot ? \cdot \anti{k_{(-\eps)}}{\eps}) \cdot k_{(0)} \quad\forall f \in H_p^*, k \in K_{\{e_p,e'_p\}},
\end{equation}
the so-called \emph{straightening formula}.
This generalizes the straightening formula of the Drinfeld double of a Hopf algebra, see Example \ref{ex:drinfeld-double-as-vertex-algebra}.

\subsection{Vertex algebras and their representations as labels for vertices} \label{subsec:algebra-at-vertex}

In this subsection we introduce, for each vertex $v\in\Sigma^0$, an algebra over $\kk$, which is constructed from the algebraic labelling in the neighbourhood of the vertex $v$.
The representations of this algebra will serve as possible labels for the vertex $v$.
In a corresponding three-dimensional topological field theory these are the possible labels for generalized Wilson lines.

Let us collect the algebras $K_e^{\vertexedge{v}{e}}$ of all half-edges $e \in \halfedges{v}$ incident to the vertex $v \in \Sigma^0$ into a tensor product
\[ K_{\halfedges{v}} := \bigotimes_{e \in \halfedges{v}} K_e^\vertexedge{v}{e} . \]
With the notation of the previous subsection, for each site $p\in\vtxsites{v}$ with neighboring half-edges $e_p$ and $e'_p$ as in Figure \ref{fig:site}, the algebra $K_{\{e'_p,e_p\}}$ is a left comodule over
\[ \big( H_p^{\vertexedge{v}{e'_p}} \big)^\coopp \ot H_p^{\vertexedge{v}{e_p}}	.	\]
This trivially extends to an $((H_p^{\vertexedge{v}{e_p'}})^\coopp \ot H_p^{\vertexedge{v}{e_p}})$-comodule structure on the tensor product $K_{\halfedges{v}}$ of $K_{\{e,e'\}}$ with the algebras attached to the remaining half-edges in $\halfedges{v}$.
The co-actions on $K_{\halfedges{v}}$ for different sites commute with each other, because they come from the bicomodule structures of the tensor factors $(K_e)_{e\in\halfedges{v}}$, making $K_{\halfedges{v}}$ a left comodule algebra over the tensor product of Hopf algebras
\begin{equation} \label{eq:tensor-product-of-balancing-algebras-around-vertex}
\bigot_{p\in\vtxsites{v}} \big( H_p^{\vertexedge{v}{e'_p}} \big)^\coopp \ot H_p^{\vertexedge{v}{e_p}} .
\end{equation}
For each site $p \in \vtxsites{v}$ we want to \emph{couple} the balancing algebra $H_p^*$ to $K_{\halfedges{v}}$, similarly as in \eqref{eq:site-algebra}.
For this we collect the balancing algebras of the sites around the vertex $v$ into a tensor product
\begin{equation*}
H_{\vtxsites{v}}^* := \bigotimes_{p \in \vtxsites{v}} H_p^*. 
\end{equation*}
This is a left module algebra over the tensor product of Hopf algebras  as in \eqref{eq:tensor-product-of-balancing-algebras-around-vertex}.
Now we have all the ingredients to introduce:
\medskip
\begin{definition} \label{def:vertex-algebra}
Let $v\in \Sigma^0$.
The \emph{$\kk$-algebra $C_v$ associated to the vertex $v$}, or \emph{vertex algebra}, is defined as follows.
For any site $p \in \vtxsites{v}$ denote by $e'_p$ and $e_p \in\halfedges{v}$ the half-edges bounding $p$ on the left and on the right, respectively, from the perspective of the vertex $v$, as illustrated in Figure \ref{fig:site}.
Then let
\begin{equation*}\label{eq:vertex-algebra}
C_v := \vtxal{H_{\vtxsites{v}}^*}{K_{\halfedges{v}}}
		= \vtxal{\left( \bigot_{p \in \vtxsites{v}} H_p^* \right)}{\left( \bigot_{e \in \halfedges{v}} K_e^\vertexedge{v}{e} \right)}
\end{equation*}
be the crossed product algebra, as introduced in Definition \ref{def:crossed-product-algebra},
for the left module algebra $H_{\vtxsites{v}}^*$ and the left comodule algebra $K_{\halfedges{v}}$ over the tensor product \eqref{eq:tensor-product-of-balancing-algebras-around-vertex} of Hopf algebras.
\end{definition}
\medskip
In particular, the algebra contains $H_{\vtxsites{v}}^* = \otimes_{p \in \vtxsites{v}} H_p^*$ and $K_{\halfedges{v}} = \otimes_{e \in \halfedges{v}} K_e^\vertexedge{v}{e}$
as subalgebras and, for each site $p'\in\vtxsites{v}$, we have the commutation relation \eqref{eq:straightening-formula}; so in other words,
\begin{equation} \label{eq:site-subalgebra-of-vertex-algebra}
\vtxal{H_{p'}^*}{K_{\{e_{p'}, e'_{p'} \}}} \subseteq \vtxal{\left( \bigot_{p \in \vtxsites{v}} H_p^* \right)}{\left( \bigot_{e \in \halfedges{v}} K_e^\vertexedge{v}{e} \right)} = C_v
\end{equation}
is a subalgebra of $C_v$.

\begin{example} \label{ex:drinfeld-double-as-vertex-algebra}
Let us consider the situation where the vertex $v\in\Sigma^0$ has precisely one half-edge $e$, which is directed away from the vertex and which is labeled by the regular $H$-bicomodule algebra $H$, the \emph{transparent} label.
\begin{figure}[H]
\begin{minipage}[c]{0.35\textwidth}
\begin{flushright}
\def\svgwidth{40mm}
\input{fig__transparent-half-edge__fixed.pdf_tex}
\end{flushright}
\end{minipage}
\hfill
\begin{minipage}[c]{0.55\textwidth}
	\begin{flushleft}
		\caption{A vertex $v$ with a single half-edge transparently labeled by $H$; \newline the associated algebra $C_v$ is the Drinfeld double $D(H)$}
		\label{fig:transparent-half-edge}
	\end{flushleft}
\end{minipage}
\end{figure}
\noindent Then for the algebra $C_v$ at the vertex $v$ we have $\vtxal{H_{\vtxsites{v}}^*}{K_{\halfedges{v}}} = \vtxal{H^*}{H}$ and the commutation relation \eqref{eq:straightening-formula} gives
\begin{equation} \label{eq:straightening-formula-standard}
h \cdot f = f(S(h_{(3)}) \cdot ? \cdot h_{(1)}) \cdot h_{(2)} .
\end{equation}
This is precisely the so-called \emph{straightening formula} of the Drinfeld double $D(H)$ of a semisimple Hopf algebra $H$ \cite{kassel}.
In the Kitaev model without defects as in \cite{buerschaperEtAl, balKir}, representations of the Drinfeld double $D(H)$ label point-like excitations.
\end{example}
~ \medskip
 
\noindent Up to this point we have explained how, for a given vertex $v \in \Sigma^0$, the algebraic labelling of the edges and plaquettes and the combinatorial structure of the cell decomposition around that vertex gives rise to the $\kk$-algebra $C_v = \vtxal{H_{\vtxsites{v}}^*}{K_{\halfedges{v}}}$.
\begin{definition} \label{def:rep-category-of-labels-at-vertex}
We declare the \emph{category of possible labels} for a vertex $v \in \Sigma^0$ for the Kitaev construction to be the $\kk$-linear category $\lmod{C_v}$ of finite-dimensional left modules over the $\kk$-algebra $C_v$.
\end{definition}

Indeed, in \cite{fss}, the category-theoretic data assigned to a vertex $v\in\Sigma^0$ is as follows.
In the language of \cite{fss}, a vertex $v$ corresponds to a boundary circle $\mathbb{L}_v$ with marked points on which defect lines end.
A $2$-cell $p\in\Sigma^2$ is labelled by a finite tensor category; in our context this is the representation category $\lmod{H_p}$ of a finite-dimensional Hopf algebra $H_p$.
An edge $e \in \Sigma^1$ is labelled by a finite bimodule category; in our context this is the representation category $\lmod{K_e}$ of a bicomodule algebra $K_e$.
Then according to \cite[Definitions~3.4~and~3.9]{fss} the category of possible labels of a vertex $v \in \Sigma^0$ is given by the category $\textup{T}(\mathbb{L}_v)$ of so-called balancings on the Deligne tensor product $\boxt_{e\in\halfedges{v}} (\lmod{K_e^{\vertexedge{v}{e}}})$ of the bimodule categories labelling the half-edges around the vertex $v$.
\begin{theorem}	\label{thm:gluing-category-as-representation-category_wo-proof}
Let $v\in\Sigma^0$.
There is a canonical equivalence of $\kk$-linear categories
\begin{equation*}
\textup{T}(\mathbb{L}_v) \cong \lmod{C_v}
\end{equation*}
between the category assigned by the modular functor $\textup{T}$, constructed in \cite{fss}, to the circle $\mathbb{L}_v$ with marked points corresponding to the half-edges incident to $v$ and the representation category of the algebra $C_v$.
\end{theorem}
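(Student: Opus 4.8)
The plan is to exhibit the equivalence as a refinement of an identification of underlying categories, upgrading it to a comparison that matches the balancing data on the $\textup{T}(\mathbb{L}_v)$ side with the module-algebra action of the balancing algebras on the $\lmod{C_v}$ side. First I would recall the standard fact that for finite-dimensional $\kk$-algebras the Deligne tensor product of module categories is computed by the tensor product of algebras, so that
\[
\boxt_{e\in\halfedges{v}} \lmod{K_e^{\vertexedge{v}{e}}} \;\cong\; \lmod{K_{\halfedges{v}}},
\]
where $K_{\halfedges{v}} = \bigotimes_{e\in\halfedges{v}} K_e^{\vertexedge{v}{e}}$. Under this equivalence the $\lmod{H_p}$-bimodule structure entering the definition of a balancing is, via the forgetful fiber functors, precisely the pullback of the ambient $(H_p^{\eps'})^\coopp \ot H_p^\eps$-action along the coaction of the comodule algebra $K_{\halfedges{v}}$, exactly as recorded around \eqref{eq:tensor-product-of-balancing-algebras-around-vertex}. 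Thus $\textup{T}(\mathbb{L}_v)$ and $\lmod{C_v}$ both sit over the common base $\lmod{K_{\halfedges{v}}}$ by canonical forgetful functors, and it suffices to identify the extra structure carried by objects on each side.

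Next I would unpack the definition of a balancing from \cite[Definitions~3.4~and~3.9]{fss}: for each site $p \in \vtxsites{v}$ it is a natural isomorphism implementing the cyclic identification of the left action of $\lmod{H_p}$ on one adjacent edge-factor with its right action on the other, subject to coherence with the tensor structure of $\lmod{H_p}$ and to mutual compatibility of the isomorphisms attached to distinct sites. This is the local form of a half-braiding, and the heart of the argument is the bimodule refinement of the familiar dictionary behind $\catcenter(\lmod{H}) \cong \lmod{\drinfelddouble(H)}$, which translates such natural isomorphisms into module structures over the dual algebra. Concretely, evaluating the natural isomorphism on the regular representation and using that $X \lact M \ract Y = X \ot_\kk M \ot_\kk Y$ turns the balancing at $p$ into a $\kk$-linear action of $H_p^*$; naturality together with tensor coherence then translate term by term into the module-algebra axioms for $H_p^*$ over $(H_p^{\eps'})^\coopp \ot H_p^\eps$ of Definition \ref{def:balancing-algebra-hopf}, with the antipodes $\anti{a}{\eps}$ produced by the orientation reversals encoded in the signs $\vertexedge{v}{e}$.

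Finally I would assemble these site-wise actions. Since the coactions of $K_{\halfedges{v}}$ belonging to different sites commute, the corresponding $H_p^*$-actions combine into a single action of $H_{\vtxsites{v}}^* = \bigotimes_{p\in\vtxsites{v}} H_p^*$, and the compatibility between a balancing and the $K_{\halfedges{v}}$-action is exactly the crossed-product commutation relation, i.e.\ the straightening formula \eqref{eq:straightening-formula}. Hence endowing an object of $\lmod{K_{\halfedges{v}}}$ with a balancing is the same datum as extending its $K_{\halfedges{v}}$-action to a module over $C_v = \vtxal{H_{\vtxsites{v}}^*}{K_{\halfedges{v}}}$, and a morphism is balancing-preserving if and only if it is $C_v$-linear; this yields the claimed equivalence, which in the transparent single-edge case reduces to $\catcenter(\lmod{H}) \cong \lmod{\drinfelddouble(H)}$ of Example \ref{ex:drinfeld-double-as-vertex-algebra}. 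The main obstacle is this last matching step: one must verify that the coherence axioms of the balancing in \cite{fss} --- natural isomorphisms glued cyclically around $\mathbb{L}_v$ --- correspond exactly to the module-algebra and crossed-product relations, keeping precise track of the signs $\eps$, the induced dualizations and antipodes $\anti{a}{\eps}$, and the $\coopp$-twists, none of which are visible at the level of the underlying categories.
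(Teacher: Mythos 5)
Your proposal is correct and takes essentially the same route as the paper's proof in Appendix \ref{app:gluing-category}: there, Proposition \ref{prop:category-of-balancings-as-representation-category} likewise realizes the Deligne product as $\lmod{K_{\halfedges{v}}}$, converts the balancing at each site into an $H_p^*$-action by evaluating it on the regular representation $H_\reg$ (with the inverse given by dual bases), and identifies the compatibility with the $K_{\halfedges{v}}$-action as the straightening formula \eqref{eq:straightening-formula-most-general-balancing}. Theorem \ref{thm:gluing-category-as-representation-category} then assembles these site-wise identifications into $\lmod{C_v}$ exactly as in your final step, using that the bicomodule structures for distinct sites commute.
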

\begin{proof}
The proof requires the introduction of significant additional notation and is therefore relegated to the Appendix \ref{app:gluing-category}, see Theorem \ref{thm:gluing-category-as-representation-category}.
\end{proof}

Furthermore, in the case that the edges incident to the vertex $v$ are labeled transparently by a single Hopf algebra $H$ seen as the regular $H$-bicomodule algebra, then the category $\lmod{C_v}$ is equivalent to the Drinfeld center $Z(\lmod{H})$ \cite[Remarks~3.5~(iii)~and~5.23]{fss}, which is equivalent to the category of representations of the Drinfeld double $D(H)$.
These are also the possible labels for point-like excitations in the Kitaev model without defects, cf.\ \cite{balKir}.

\section{Construction of a Kitaev model with defects}
\label{subsec:state-space}

Having specified in the preceding subsections the algebraic input data for the Kitaev model and, in particular, having identified the possible labels for vertices, we are now in a position to construct, for any oriented surface $\Sigma$ with labeled cell decomposition, the vector space and local projectors of the model. 

We recall that we have for each plaquette $p\in\Sigma^2$ a semisimple Hopf algebra $H_p$, for each edge $e\in\Sigma^1$ a semisimple algebra $K_e$ with a compatible bicomodule structure over the Hopf algebras of the incident plaquettes, and for each vertex $v\in\Sigma^0$ a left module $Z_v$ over the algebra $C_v = \vtxal{H_{\vtxsites{v}}^*}{K_{\halfedges{v}}}$, introduced in Definition \ref{def:vertex-algebra}.
We abbreviate
\begin{align*}
 K_{\Sigma^1} &:= \bigot_{e\in\Sigma^1} K_e, \\
 Z_{\Sigma^0} &:= \bigot_{v\in\Sigma^0} Z_v,
\end{align*}
for the tensor products as vector spaces over $\kk$.
More precisely, $K_{\Sigma^1}$ enters our construction of the local projectors and the Hamiltonian of the model not only as a vector space, but together with its structure as the regular $(\bigot_{e\in\Sigma^1} K_e)$-bimodule and its various
co-actions with respect to the Hopf algebras labeling the plaquettes. 
Similarly, we will regard $Z_{\Sigma^0}$ together with its $C_v$-module structure for every vertex $v\in \Sigma^0$.

 \medskip 

The first thing we construct is the vector space, 
on which subsequently the commuting local projectors and the Hamiltonian will be defined.
\begin{definition} \label{def:statespace}
The \emph{state space} assigned to an oriented surface $\Sigma$ with labeled cell decomposition as above is the vector space
\begin{equation} \label{eq:the-vector-space}
\mathscr{H} := \Hom{\kk}{K_{\Sigma^1}, Z_{\Sigma^0}} = (\bigot_{e\in\Sigma^1} K_e^*) \ot (\bigot_{v\in\Sigma^0} Z_v).
\end{equation}
We refer to a tensor factor associated to an edge $e$ or to a vertex $v$ as a \emph{local degree of freedom} associated to $e$ or $v$, respectively.
\end{definition}
\begin{remarks} \ \begin{enumerate}[ref={\theremark.(\arabic*)}]
\item
In the standard Kitaev construction without defects, the vector space is a tensor product of copies of a single Hopf algebra $H$ for every edge, which we interpret in our context as the regular bicomodule algebra over $H$ (the transparent labeling), and for every vertex the dual vector space of a module over $D(H)$ \cite{buerschaperEtAl, balKir}.
In our construction, we instead consider a module over the algebra $C_v$ for every vertex $v\in\Sigma^0$ and the vector space duals of the bicomodule algebras for the edges.
This dual version will make it easier to compare our ground-state spaces with the block spaces of \cite{fss}.
\item
In order to define the state space $\statespace$ we are implicitly using that we do not only have the categories $(\lmod{K_e})_{e\in\Sigma^1}$ and $(\lmod{H_p})_{p\in\Sigma^2}$ as algebraic input data, but we also have the algebras $(K_e)_{e\in\Sigma^1}$ and $(H_p)_{p\in\Sigma^2}$, of which they are the representation categories.
In other words, we need fibre functors on the categories $(\lmod{K_e})_{e\in\Sigma^1}$ and $(\lmod{H_p})_{p\in\Sigma^2}$ to the category of vector spaces in order to define $\mathscr{H}$ as a space of $\kk$-linear homomorphisms.
\item
Note that we are only defining a vector space over $\kk$, and not a Hilbert space, i.e.\ we do not consider a scalar product here.
Accordingly, when we speak of \emph{projectors} on this vector space we always mean idempotent endomorphisms.
By a \emph{Hamiltonian} we mean a diagonalizable endomorphism.
\end{enumerate}
\end{remarks}

\subsection{Local representations of the vertex algebras on the state space}
\label{subsec:local-reps-on-state-space}

Next, we exhibit on the vector space $\statespace$ a natural $C_v$-bimodule structure for each vertex $v \in \Sigma^0$, that is \emph{local} in the sense that it acts non-trivially only on the local degrees of freedom in a neighborhood of the vertex $v \in \Sigma^0$.
This is analogous to the existence of local actions of the Drinfeld double $D(H)$ on the state space in the ordinary Kitaev model without defects for a semisimple Hopf algebra $H$ \cite{buerschaperEtAl, balKir}.
In our construction, however, the algebras $C_v$ are in general not Hopf algebras and we only obtain \emph{bi}module structures on $\statespace$.
(A $C_v$-bimodule structure is equivalent to a left $(C_v \ot C_v^\opp)$-action, where $C_v^\opp$ has the opposite multiplication of $C_v$.
Whenever $C_v$ is a Hopf algebra, such as $D(H)$, any $C_v$-bimodule structure can be pulled back to a left $C_v$-action via the algebra map $(\id{}\ot S)\circ\Delta : C_v \to C_v \ot C_v^\opp$, using the co-multiplication $\Delta$ and the antipode $S$ of the Hopf algebra.)

\medskip

Let $v \in \Sigma^0$ be any vertex.
Recall from Subsection \ref{subsec:algebra-at-vertex} that the algebra $$C_v = \vtxal{H_{\vtxsites{v}}^*}{K_{\halfedges{v}}}$$ is a crossed product of $H_{\vtxsites{v}}^*$ and $K_{\halfedges{v}}$ and contains these as subalgebras, and that $$H_{\vtxsites{v}}^* = \bigot_{p \in \vtxsites{v}} H_p^*$$ is the tensor product of the algebras $H_p^*$ for each site $p\in\vtxsites{v}$.
A $C_v$-bimodule structure on $\statespace$ is therefore fully determined by a $K_\halfedges{v}$-bimodule structure and $H_p^*$-bimodule structures for each site $p \in \vtxsites{v}$, provided that for each $p\in \vtxsites{v}$ the left and right actions of $K_\halfedges{v}$ and $H_p^*$ each satisfy the straightening formula \eqref{eq:straightening-formula} of the crossed product algebra $\vtxal{H_p^*}{K_{\halfedges{v}}}$, which we prove in Theorem \ref{thm:straightening-formula}.

We start by exhibiting a
$K_{\halfedges{v}}$-bimodule structure on the vector space $\mathscr{H}$.
This is the analogon of the action of the Hopf algebra $H$ for every vertex in the ordinary Kitaev model for a semisimple Hopf algebra $H$.
\begin{definition} \label{def:vertex-bimodule}
Let $v \in \Sigma^0$.
The \emph{$K_{\halfedges{v}}$-bimodule structure
on $\statespace$}
\begin{align*}
\tild{A}_v : K_{\halfedges{v}} \ot K_{\halfedges{v}}^\opp \ot \statespace &\lto \statespace, 
\end{align*}
is defined on the vector space of linear maps $\mathscr{H} = \Hom{\kk}{K_{\Sigma^1}, Z_{\Sigma^0}}$
in the standard way by pre-composing with the left action on $K_{\Sigma^1}$ and post-composing with the left action on $Z_{\Sigma^0}$, which are defined as follows:
\begin{itemize}
\item
Firstly, the vector space $K_{\Sigma^1}$ becomes a left $K_{\halfedges{v}}$-module as follows.
Restrict the regular $K_{\Sigma^1}$-bimodule structure of $K_{\Sigma^1}$, seen as a left $(K_{\Sigma^1} \ot K_{\Sigma^1}^\opp)$-action,
to the subalgebra $K_\halfedges{v} \subseteq K_{\Sigma^1} \ot K_{\Sigma^1}^\opp$.
\item
Secondly, the vector space $Z_{\Sigma^0}$ becomes a left $K_\halfedges{v}$-module as follows.
Restrict the given $C_v$-module structure on $Z_v$ to the subalgebra $K_{\halfedges{v}} \subseteq \bigot_{v\in\Sigma^0} (\vtxal{H_{\vtxsites{v}}^*}{K_{\halfedges{v}}}) = C_v$ and extend the action trivially to the vector space $Z_{\Sigma^0} = Z_v \ot \bigot_{w\in\Sigma^0\setminus \{v\}} Z_w$.
\end{itemize}
\end{definition}

\medskip

Next we will exhibit, for any site $p\in\vtxsites{v}$ incident to a vertex $v\in\Sigma^0$,  an $H_p^*$-bimodule structure on $\statespace$.

Recall that $\plaqsites{p}$ denotes the set of incidences of a vertex with a given plaquette $p$ (which we also call \emph{sites}) and denote by $\plaqedges{p}$ the set of incidences of an edge with the given plaquette $p$ (which we call \emph{plaquette edges}). 
We consider their union $\plaqsites{p} \cup \Sigma_p^1$ together with a cyclic order on it, given by the clockwise direction along the boundary of $p$ with respect to the orientation of $\Sigma$, as illustrated in Figure \ref{fig:cyclic-order-of-plaquette-boundary}
\begin{figure}[H]
\begin{minipage}[c]{0.35\textwidth}
\centering
\begin{flushright}
	\def\svgwidth{40mm}
	\input{fig__clockwise_cyclic_order_of_plaquette_boundary__fixed.pdf_tex}
\end{flushright}
\end{minipage}\hfill
\begin{minipage}[c]{0.6\textwidth}
	\caption{Cyclic order on the set $\plaqsites{p} \cup \Sigma_p^1$ of sites and plaquette edges of a plaquette $p$}
\label{fig:cyclic-order-of-plaquette-boundary}
\end{minipage}
\end{figure}
Furthermore, for any plaquette edge $e \in \Sigma_p^1$ at the plaquette $p$, let the sign $\plaqedge{p}{e} \in \{ +1, -1 \}$ be
positive if the plaquette edge $e \in \Sigma_p^1$ is clockwise directed around the plaquette $p$:
\begin{figure}[H]
	\begin{minipage}[c]{0.3\textwidth}
		\centering
		\begin{flushright}
		\def\svgwidth{20mm}
		\input{fig__plaquette_edge_directed_positively__fixed.pdf_tex}
		\end{flushright}
	\end{minipage}\hfill
	\begin{minipage}[c]{0.7\textwidth}
		\caption{A plaquette edge $e$ with sign $\plaqedge{p}{e} := +1$} \label{fig:positive-plaquette-edge}
	\end{minipage}
\end{figure}
and negative if $e \in \Sigma_p^1$ is directed counter-clockwise around $p$:
\begin{figure}[H]
	\begin{minipage}[c]{0.3\textwidth}
		\centering
		\begin{flushright}
\def\svgwidth{20mm}
\input{fig__plaquette_edge_directed_negatively__fixed.pdf_tex}
		\end{flushright}
	\end{minipage}\hfill
	\begin{minipage}[c]{0.7\textwidth}
		\caption{A plaquette edge $e$ with sign $\plaqedge{p}{e} := -1$} \label{fig:negative-plaquette-edge}
	\end{minipage}
\end{figure}

Recall that, attached to each plaquette $p\in\Sigma^2$, there is a Hopf algebra $H_p$.
Now, depending on choice of a site $v \in \plaqsites{p}$ at $p$,
we define an $H_p^*$-bimodule structure on the vector space $\statespace$.
This is the analogon of the action of the dual Hopf algebra $H^*$ for every site in the ordinary Kitaev model for a semisimple Hopf algebra $H$.

\begin{definition} \label{def:plaquette-bimodule}
Let $p\in \Sigma^2$.
We define, for each site $v \in \plaqsites{p}$, the \emph{$H_p^*$-bimodule structure on $\statespace$}, or left action of the enveloping algebra $H_p^* \ot (H_p^*)^\opp$,
\begin{align*}
	\tild{B}_{(p,v)} : H_p^* \ot (H_p^*)^\opp \ot \statespace &\lto \statespace ,
\end{align*}
by the following left and right $H_p^*$-actions on $\statespace$.
\begin{itemize}
\item
We start by declaring that $H_p^*$ acts from the left on $\statespace = (\bigot_{e\in\Sigma^1} K_e^*) \ot (\bigot_{w\in\Sigma^0} Z_w)$ by the action of $H_p^* \subseteq \vtxal{H_{\vtxsites{v}}^*}{K_{\halfedges{v}}}$ on the $(\vtxal{H_{\vtxsites{v}}^*}{K_{\halfedges{v}}})$-module $Z_v$ and by acting as the identity on the remaining tensor factors of $\statespace$.
\item
For the right action of $H_p^*$ on $\statespace$,
we use the total order on the set $(\plaqsites{p} \cup \plaqedges{p}) \setminus \{ v \}$ starting right after $v \in \plaqsites{p}$ in $\plaqsites{p} \cup \plaqedges{p}$ with respect to the cyclic order declared above, given by the clockwise direction around the plaquette $p$.
We first exhibit individual right $H_p^*$-actions on the tensor factors of $(\bigot_{e\in\Sigma_p^1} K_e^*) \ot (\bigot_{w\in\Sigma_p^0\setminus\{v\}} Z_w)$:
\begin{itemize}
	\item
	For any $e \in \plaqedges{p}$, the vector space $K_e^*$ becomes a right $H_p^*$-module as follows.
	$K_e$ is a right $H_p^{\plaqedge{p}{e}}$-comodule and, hence, a left ${(H_p^*)}^{\plaqedge{p}{e}}$-module.
	Thus the vector space dual $K_e^*$ becomes a right ${(H_p^*)}^{\plaqedge{p}{e}}$-module, and finally, by pulling back along the algebra isomorphism $\anti{?}{\plaqedge{p}{e}} : H_p^* \to {H_p^*}^{\plaqedge{p}{e}}$, a right $H_p^*$-module.
	
	Recall that $\anti{?}{+1} \eqbydef \id{H_p^*}$ and $\anti{?}{-1} \eqbydef S$, the antipode of $H_p^*$.
	Explicitly, this right $H_p^*$-action is given by
	\begin{align*}
	K_e^* \ot H_p^* &\lto K_e^*,	\\
	\varphi \ot f &\lmapsto \left( k \mapsto \varphi\left( k_{(0)} f\left( \anti{k_{(\plaqedge{p}{e})}}{\plaqedge{p}{e}} \right) \right) \right) .
	\end{align*}
	\item
	For any $w\in \plaqsites{p} \setminus \{v\}$, the vector space $Z_w$ becomes a right $H_p^*$-module as follows.
	The $(\vtxal{H_{\Sigma_w^2}^*}{K_{\Sigma_w^1}})$-module $Z_w$ comes with a left $H_p^*$-action since $H_p^* \subseteq \vtxal{H_{\Sigma_w^2}^*}{K_{\Sigma_w^1}}$ is a subalgebra.
	We let $H_p^*$ act on $Z_w$ from the right by pulling back this left action along the antipode $\anti{?}{-1} = S : H_p^* \to H_p^*$.
\end{itemize}
Then we declare $H_p^*$ to act from the right on the tensor product $(\bigot_{e\in\Sigma_p^1} K_e^*) \ot (\bigot_{w\in\Sigma_p^0\setminus\{v\}} Z_w)$ by applying the co-multiplication on $H_p^*$ suitably many times and then acting individually on the tensor factors in the sequence given by the image of the clockwise linear order that we have prescribed on the set $(\plaqsites{p} \cup \plaqedges{p}) \setminus \{ v \}$ under the map $(\plaqsites{p} \cup \plaqedges{p}) \setminus \{ v \} \to (\Sigma_p^0 \cup \Sigma_p^1) \setminus \{ v \}$ that assigns to a site its underlying vertex and to a plaquette edge its underlying edge.
Finally, this gives a right $H_p^*$-action on $\statespace = (\bigot_{e\in\Sigma^1} K_e^*) \ot (\bigot_{w\in\Sigma^0} Z_w)$ by acting with the identity on all remaining tensor factors.
\end{itemize}
\end{definition}

\medskip

So far we have defined, in Definitions \ref{def:vertex-bimodule} and \ref{def:plaquette-bimodule}, on the vector space $\statespace$ an $K_{\halfedges{v}}$-bimodule structure $\tild{A}_v$ for each vertex $v \in \Sigma^0$ and an $H_p^*$-bimodule structure $\tild{B}_{(p,v)}$ for each site $p \in \vtxsites{v}$.
These are analogous to the actions of the Hopf algebra $H$ and the dual Hopf algebra $H^*$ defined for each site in the ordinary Kitaev model without defects.
Just as the latter are shown to interact with each other non-trivially, giving a representation of the Drinfeld double $D(H)$ at each site \cite{buerschaperEtAl}, we will now proceed to study how the bimodule structures $\tild{A}_v$ and $\tild{B}_{(p,v')}$ of $K_{\halfedges{v}}$ and $H_p^*$ for various $v$ and $(p,v')$ interact with each other.

In order to simplify the proof we will make a certain regularity assumption on the cell decomposition of the surface $\Sigma$: 
We call a cell decomposition \emph{regular} if it has no looping edges, i.e.\ there is no edge which has the same source vertex as target vertex and if the Poincaré-dual cell decomposition also has no looping edges, i.e.\ in the original cell decomposition there is no plaquette that has two incidences with one and the same edge (on its two sides).

\begin{theorem}  \label{thm:straightening-formula}
Let $\statespace$ be the vector space defined in Definition \ref{def:statespace} for an oriented surface $\Sigma$ with a labelled cell decomposition.
Recall from Definitions \ref{def:vertex-bimodule} and \ref{def:plaquette-bimodule} the $K_\halfedges{v}$-bimodule structure $\tild{A}_v$ on $\statespace$ for every vertex $v\in\Sigma^0$, and the $H_p^*$-bimodule structure $\tild{B}_{(p,v)}$ on $\statespace$ for every plaquette $p \in \Sigma^2$ together with incident site $v' \in \plaqsites{p}$. 
Then
\begin{itemize}
\item
For any pair of vertices $v_1 \neq v_2 \in \Sigma^0$, the actions $\tild{A}_{v_1}$ and $\tild{A}_{v_2}$ commute with each other.
\item
For any pair of sites $(p_1\in\Sigma^2, v_1\in\plaqsites{p_1})$ and $(p_2\in\Sigma^2, v_2\in\plaqsites{p_2})$ such that $p_1\neq p_2$, the actions $\tild{B}_{(p_1,v_1)}$ and $\tild{B}_{(p_2,v_2)}$ commute with each other.
\item
Assume that the cell decomposition of $\Sigma$ is regular.
For any site $(p\in\Sigma^2, v\in\plaqsites{p})$,
the actions $\tild{A}_{v}$ and $\tild{B}_{(p,v)}$ compose to give on $\statespace$ a bimodule structure over the crossed product algebra $\vtxal{H_{(p,v)}^*}{K_{\halfedges{v}}}$, 
\begin{align*}
\tild{B}_{(p,v)} \tild{A}_v : H_p^* \ot K_{\halfedges{v}} \ot (H_p^* \ot K_{\halfedges{v}})^\opp \ot \statespace &\lto \statespace,	\\
f \ot k \ot f' \ot k' \ot x &\lmapsto \tild{B}_{(p,v)}^{f\ot f'}  \tild{A}_v^{k\ot k'} (x)  .
\end{align*}
\end{itemize}
\end{theorem}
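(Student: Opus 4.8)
The plan is to use the tensor decomposition $\statespace=(\bigot_{e\in\Sigma^1}K_e^*)\ot(\bigot_{w\in\Sigma^0}Z_w)$ and to record, for each bimodule structure in play, the precise set of tensor factors on which its left and its right part act non-trivially. The left part of $\tild{A}_v$ post-composes with the $C_v$-action on $Z_v$, and its right part pre-composes with left multiplication on the factors $K_e$ ($e\in\edges{v}$); the left part of $\tild{B}_{(p,v)}$ acts on $Z_v$ through $H_p^*\subseteq C_v$, while its right part acts on $K_e^*$ ($e\in\plaqedges{p}$) and on $Z_w$ ($w\in\plaqsites{p}\setminus\{v\}$). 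With this bookkeeping each assertion reduces either to disjointness of supports or to a local relation on a single shared factor.

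For the first bullet I would argue factorwise. For $v_1\neq v_2$ the left parts of $\tild{A}_{v_1}$ and $\tild{A}_{v_2}$ sit on the distinct factors $Z_{v_1}$ and $Z_{v_2}$, and the right parts meet a common factor only on an edge $e$ joining $v_1$ and $v_2$; since $e$ points away from its source and towards its target, one vertex acts on $K_e$ by left and the other by right multiplication, and these commute by associativity. For the second bullet, with $p_1\neq p_2$, the only shared supports of $\tild{B}_{(p_1,v_1)}$ and $\tild{B}_{(p_2,v_2)}$ are an edge $e$ on the common boundary of $p_1$ and $p_2$, where the two actions on $K_e^*$ arise from the two (left and right) legs of the bicomodule structure of $K_e$ and commute by its coassociativity, and a common vertex $w$, where the two actions factor through the distinct tensor factors $H_{p_1}^*$ and $H_{p_2}^*$ of $H_{\vtxsites{w}}^*\subseteq C_w$ and hence commute. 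Distributing $H_{p_i}^*$ along $\partial p_i$ by iterated comultiplication does not affect this, since the relevant actions already commute on each individual factor.

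For the third bullet the key structural observation is that every left part (of both $\tild{A}_v$ and $\tild{B}_{(p,v)}$) acts only on the factor $Z_v$, while every right part acts only on $K_{\Sigma^1}$ and on the $Z_w$ with $w\neq v$; the composite action therefore factorizes as an action on $Z_v$ times an action on the complementary factors, so the left and right parts commute for trivial reasons. The left parts act on $Z_v$ through the $C_v$-module structure, and since $\vtxal{H_p^*}{K_\halfedges{v}}$ is a subalgebra of $C_v$ (compare \eqref{eq:site-subalgebra-of-vertex-algebra}), they automatically assemble into a left action of the crossed product and in particular satisfy the straightening formula \eqref{eq:straightening-formula}. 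For the right parts, the supports of the right $\tild{A}_v$- and the right $\tild{B}_{(p,v)}$-actions overlap only in the two factors $K_{e_p}^*$ and $K_{e'_p}^*$ carried by the half-edges bounding $p$ at $v$ (on every other $Z_w$ only $\tild{B}_{(p,v)}$ acts, so there is nothing to check). On each of these two factors I would verify by a direct computation, from the explicit right $K_e$-action dual to left multiplication and the right $H_p^*$-action of Definition \ref{def:plaquette-bimodule}, that they obey the straightening relation of the opposite crossed product, using that the coaction $K_e\to K_e\ot H_p^{\plaqedge{p}{e}}$ is an algebra map.

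The main obstacle is precisely this last computation on $K_{e_p}^*$ and $K_{e'_p}^*$: one must match the antipode-twisted identifications $\anti{?}{\eps}$ and $\anti{?}{\plaqedge{p}{e}}$, the appropriate leg of the bicomodule coaction, and the transpose of left multiplication on $K_e$, so that all signs and antipodes reproduce exactly the straightening formula \eqref{eq:straightening-formula}. The regularity hypothesis is what legitimizes the localization: the absence of looping edges makes $e_p$ and $e'_p$ distinct edges, and the absence of a plaquette meeting an edge on both sides makes each plaquette edge contribute a single tensor factor, so that no factor is touched twice by the same action and the supports are as described.
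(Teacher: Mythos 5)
Your reductions for the first two bullets are correct and essentially identical to the paper's: disjointness of supports, left-versus-right multiplication on a shared $K_e$, and commutativity of the subalgebras $H_{p_1}^*$ and $H_{p_2}^*$ inside $C_w$. (On the second bullet you are in fact slightly more explicit than the paper, which lists the shared edge $e$ between $p_1$ and $p_2$ among the common tensor factors but only writes out the argument for a shared vertex; your observation that the two actions on $K_e^*$ come from the two legs of the bicomodule coaction and commute by its coassociativity handles that case correctly.) Likewise, your skeleton for the third bullet --- left parts act only on $Z_v$ and assemble into a left $\vtxal{H_p^*}{K_{\halfedges{v}}}$-action because this is a subalgebra of $C_v$; left and right parts commute because their supports are disjoint; regularity forces $e_p$ and $e'_p$ to have distinct underlying edges and localizes the right actions --- is exactly the paper's reduction.

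The gap is that you stop precisely where the theorem's actual content begins. Having reduced the third bullet to the claim that the right $K_{\halfedges{v}}$- and right $H_p^*$-actions satisfy the straightening formula, you write that you ``would verify by a direct computation'' and then name that computation as ``the main obstacle'' without performing it. That verification --- showing that $f \big( \anti{{a'}_{(\eps')}}{-\eps'} \cdot ? \cdot \anti{a_{(-\eps)}}{\eps} \big) . (a_{(0)} \ot a'_{(0)}) . (x \ot x' \ot v) = (a \ot a') . f . (x \ot x' \ot v)$ for the dualized actions on $K_{e_p} \ot K_{e'_p} \ot V$ --- is the heart of the proof: the paper carries it out in a multi-step calculation generalizing the Drinfeld-double computation of \cite{buerschaperEtAl}, using multiplicativity of the coactions and the antipode axiom, and the theorem is simply not proved without it. Moreover, your framing of the check as happening ``on each of these two factors'' is not viable as stated: in the straightening formula the coaction legs of $a$ and of $a'$ sit on opposite sides of the argument of $f$, and the comultiplied legs of $f$ also land on the spectator factor $V = (\bigot_{e\in\Sigma_p^1\setminus\{e_p,e'_p\}} K_e) \ot (\bigot_{w\in\Sigma_p^0\setminus\{v\}} Z_w^*)$, so the identity intertwines both half-edge factors and $V$ at once and must be verified jointly, as the paper does. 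A factor-by-factor check would additionally require an argument that the straightening relation is multiplicative in $k$, reducing $k = a \ot a'$ to $a \ot 1$ and $1 \ot a'$ --- itself delicate because of the twisted multiplications $\cdot_\eps$ and $\cdot_{\eps'}$ --- and this too is absent from your proposal.
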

\begin{proof} \
\begin{itemize}
\item
The left $K_{\halfedges{v_1}}$- and $K_{\halfedges{v_2}}$-actions act as the identity on all tensor factors of $ \statespace $ except on $Z_{v_1}$ and $Z_{v_2}$, respectively.
It is thus clear that they commute for $v_1 \neq v_2$.

The right $K_{\halfedges{v_1}}$- and $K_{\halfedges{v_2}}$-actions only have a common tensor factor on which they do not act by the identity for every edge $e\in\Sigma^1$ that joins the vertices $v_1$ and $v_2$.
Such an edge is directed away from one of the vertices and directed towards the other.
Hence, the action for one of the vertices comes from left multiplication of $K_e$ and the other one from right multiplication, so they commute.
\item
The left $H_{p_1}^*$- and $H_{p_2}^*$-actions act as the identity on all tensor factors of $ \statespace$ except on $Z_{v_1}$ and $Z_{v_2}$, respectively.
It is thus clear that they commute for $v_1 \neq v_2$.
In the remaining case $v_1=v_2=:v$, $H_{p_1}^*$ and $H_{p_2}^*$ are commuting subalgebras in $C_v$.
Since their actions on $Z_v$ are by Definition \ref{def:plaquette-bimodule} the restrictions of the $C_v$-action that $Z_v$ comes with, they must therefore commute.

The right $H_{p_1}^*$- and $H_{p_2}^*$-actions only have a common tensor factor on which they do not act by the identity for every vertex $v\in\Sigma^0$ and for every edge $e\in\Sigma^1$ that lies in the boundaries of both plaquettes $p_1$ and $p_2$.
For any such vertex $v$, the two actions come from the $(\vtxal{H_{\vtxsites{v}}^*}{K_{\halfedges{v}}})$-action on $Z_v$ restricted to the two subalgebras $H_{p_1}^*$ and $H_{p_2}^*$, respectively.
These subalgebras commute inside $\vtxal{H_{\vtxsites{v}}^*}{K_{\halfedges{v}}}$, therefore showing the claim.
\item
The left $K_{\halfedges{v}}$- and $H_p^*$-actions on $\statespace$ are simply the restrictions of the left $C_v$-action on $Z_v$ to $K_{\halfedges{v}}$ and $H_p^*$, respectively, and the identity on all other tensor factors of $\statespace$.
Hence, by construction they satisfy the commutation relations of the crossed product algebra $\vtxal{H_{p}^*}{K_{\halfedges{v}}} \subseteq C_v$, see also \eqref{eq:site-subalgebra-of-vertex-algebra}.

The right $K_{\halfedges{v}}$- and $H_p^*$-actions on $\statespace$ are non-trivial only on the tensor factors $\bigot_{e\in\Sigma_v^1} K_e^*$ and $(\bigot_{e\in\Sigma_p^1} K_e^*) \ot (\bigot_{w\in\Sigma_p^0\setminus\{v\}} Z_w)$, respectively.
We can therefore restrict our attention to the vector space $(\bigot_{e\in\Sigma_v^1\cup\Sigma_p^1} K_e^*) \ot (\bigot_{w\in\Sigma_p^0\setminus\{v\}} Z_w)$, on which $K_{\halfedges{v}}$ and $H_p^*$ act from the right.

For convenience, for the remainder of the proof we now switch to the dual vector space $(\bigot_{e\in\Sigma_v^1\cup\Sigma_p^1} K_e) \ot (\bigot_{w\in\Sigma_p^0\setminus\{v\}} Z_w^*)$, with the corresponding left actions of 
$K_{\halfedges{v}}$ and $H_p^*$.
With the notation of Subsection \ref{subsec:half-edges-and-sites}, let $e_p, e'_p \in \halfedges{v}$ be the half-edges at $v$ on the two sides of the site $p \in \vtxsites{v}$, with signs $\eps:=\vertexedge{v}{e_p}$ and $\eps':= \vertexedge{v}{e'_p}$.
The $K_{\halfedges{v}}$- and $H_p^*$-actions only overlap on the tensor factors $(K_e)_{e\in\Sigma_v^1\cap\Sigma_p^1}$ corresponding to the edges underlying the half-edges $e_p, e'_p \in \halfedges{v}$.
Due to our regularity assumption on the cell decomposition, the half-edges $e_p$ and $e'_p$ have distinct underlying edges.
Then the action of $K_\halfedges{v} = (K_{e_p}^{\eps} \ot K_{e'_p}^{\eps'}) \ot \bigot_{e\in\halfedges{v}\setminus\{e_p,e'_p\}} K_e^\vertexedge{v}{e}$ on $\bigot_{e\in\Sigma_v^1} K_e$, which is a tensor product of algebras, decomposes into a tensor product of the action of $K_{e_p}^{\eps} \ot K_{e'_p}^{\eps'}$ on $K_{e_p} \ot K_{e'_p}$ and the action of $\bigot_{e\in\halfedges{v}\setminus\{e_p,e'_p\}} K_e^\vertexedge{v}{e}$ on $\bigot_{e\in\Sigma_v^1\setminus\{e_p,e'_p\}} K_e$.
On the latter vector space, $H_p^*$ does not act non-trivially by our regularity assumption on the cell decomposition.
Hence, it remains to consider the interactions of the left actions of $K_{e_p}^{\eps} \ot K_{e'_p}^{\eps'}$ and $H_p^*$ on the vector space $K_{e_p} \ot K_{e'_p} \ot (\bigot_{e\in\Sigma_p^1\setminus\{e_p,e'_p\}} K_e) \ot (\bigot_{w\in\Sigma_p^0\setminus\{v\}} Z_w^*)$.
We abbreviate by $V := (\bigot_{e\in\Sigma_p^1\setminus\{e_p,e'_p\}} K_e) \ot (\bigot_{w\in\Sigma_p^0\setminus\{v\}} Z_w^*)$ the tensor factor on which only $H_p^*$ acts non-trivially.
Furthermore, without loss of generality, we write the left $H_p^*$-action on $V$ in terms of the Sweedler notation for the corresponding right $H_p$-coaction, $V\to V\ot H_p, v \mapsto v_{(0)} \ot v_{(1)}$:
$$ H_p^* \ot V \lto V,v \lmapsto f.v =: f(v_{(1)}) v_{(0)} . $$
Finally, it is left to analyze the interaction between the $H_p^*$-action
\begin{align*}
H_p^* \ot K_{e_p} \ot K_{e'_p} \ot V &\lto K_{e_p} \ot K_{e'_p} \ot V,	\\
f \ot x \ot x' \ot v &\lmapsto f_{(3)}.x \ot f_{(1)}.x' \ot f_{(2)}.v	\\
&\phantom{\lmapsto} = f\left( \anti{{x'}_{(\eps')}}{\eps'} v_{(1)} \anti{{x}_{(-\eps)}}{-\eps} \right) x_{(0)} \ot x'_{(0)} \ot v_{(0)},
\end{align*}
and the $(K_{e_p}^{\eps} \ot K_{e'_p}^{\eps'})$-action
\begin{align*}
(K_{e_p}^{\eps} \ot K_{e'_p}^{\eps'}) \ot K_{e_p} \ot K_{e'_p} \ot V &\lto K_{e_p} \ot K_{e'_p} \ot V,	\\
a \ot a' \ot x \ot x' \ot v &\lmapsto a.x \ot a'.x' \ot v	\\
&\phantom{\lmapsto} (a \cdot_\eps x) \ot (a' \cdot_{\eps'} x') \ot v,
\end{align*}
where $\cdot_\eps$ and $\cdot_{\eps'}$ denote the multiplication in $K_{e_p}^\eps$ and $K_{e'_p}^{\eps'}$, respectively,
that is \[ a \cdot_\eps x := \begin{cases}
a x, & \eps = +1, \\
x a, & \eps = -1.
\end{cases} \]
It remains to show that that these actions satisfy the straightening formula
\begin{equation*}
f(\anti{{a'}_{(\eps')}}{-\eps'} \cdot ? \cdot \anti{a_{(-\eps)}}{\eps}).(a_{(0)} \ot a'_{(0)}).(x \ot x' \ot v) = (a \ot a').f.(x \ot x' \ot v),
\end{equation*}
for all $f \in H_p^*$, $a\ot a' \in K_{e_p}^{\eps} \ot K_{e'_p}^{\eps'}$ and $x \ot x' \ot v \in K_{e_p} \ot K_{e'_p} \ot V$.
Indeed, the following calculation, which is analogous to the calculation in the proof of \cite[Theorem~1]{buerschaperEtAl} but more general and at the same time shorter, verifies this.
\begin{align*}
& f \Big( \anti{{a'}_{(\eps')}}{-\eps'} \cdot ? \cdot \anti{{a}_{(-\eps)}}{\eps} \Big) . (a_{(0)} \ot a'_{(0)}) . (x \ot x' \ot v) \\
&\phantom{xxxxxxxxx} = f \Big( \anti{{a'}_{(\eps')}}{-\eps'} \cdot ? \cdot \anti{{a}_{(-\eps)}}{\eps} \Big) . ((a_{(0)} \cdot_\eps x) \ot (a'_{(0)} \cdot_{\eps'} x') \ot v) \\
&\phantom{xxxxxxxxx}= f \Big( \anti{{a'}_{(2\eps')}}{-\eps'} \cdot \anti{(a'_{(0)} \cdot_{\eps'} x')_{(\eps')}}{\eps'} \cdot v_{(1)} \cdot \anti{(a_{(0)} \cdot_\eps x)_{(-\eps)}}{-\eps} \cdot \anti{{a}_{(-2\eps)}}{\eps} \Big) \\
&\phantom{xxxxxxxxx} \phantom{xxx} ((a_{(0)} \cdot_\eps x)_{(0)} \ot (a'_{(0)} \cdot_{\eps'} x')_{(0)} \ot v_{(0)}) \\
&\phantom{xxxxxxxxx}= f \Big( \anti{{a'}_{(2\eps')}}{-\eps'} \cdot \anti{{a'}_{(\eps')}}{\eps'} \cdot \anti{{x'}_{(\eps')}}{\eps'} \cdot v_{(1)} \cdot \anti{x_{(-\eps)}}{-\eps} \cdot \anti{a_{(-\eps)}}{-\eps} \cdot \anti{{a}_{(-2\eps)}}{\eps} \Big) \\
&\phantom{xxxxxxxxx} \phantom{xxx} ((a_{(0)} \cdot_\eps x_{(0)}) \ot (a'_{(0)} \cdot_{\eps'} x'_{(0)}) \ot v_{(0)}) \\
&\phantom{xxxxxxxxx}= f \Big( \anti{{x'}_{(\eps')}}{\eps'} \cdot v_{(1)} \cdot \anti{x_{(-\eps)}}{-\eps} \Big) ( (a \cdot_\eps x_{(0)}) \ot (a' \cdot_{\eps'} x'_{(0)}) \ot v_{(0)} ) \\
&\phantom{xxxxxxxxx}= (a \ot a') . \bigg( f \Big( \anti{{x'}_{(\eps')}}{\eps'} \cdot v_{(1)} \cdot \anti{x_{(-\eps)}}{-\eps} \Big) ( x_{(0)} \ot x'_{(0)} \ot v_{(0)} ) \bigg) \\
&\phantom{xxxxxxxxx}= (a \ot a') . \big( f . ( x \ot x' \ot v ) \big) .
\end{align*}
This proves that $H_p^*$ and $K_{e_p}^{\eps} \ot K_{e'_p}^{\eps'}$ together give a representation of the crossed product algebra $\vtxal{H_p^*}{(K_{e_p}^{\eps} \ot K_{e'_p}^{\eps'})}$, as claimed.
\end{itemize}
\end{proof}

\begin{remark}
Taking all sites $p \in \vtxsites{v}$ around a given vertex $v \in \Sigma^0$ together, we thus get, due to Theorem \ref{thm:straightening-formula}, on $\statespace$ a bimodule structure over the vertex algebra $C_v$.
It is remarkable that this makes the crossed product algebra structure on $C_v$ show up naturally -- analogous to the appearance of the algebra structure of the Drinfeld double in the commutation relation of the vertex and plaqette actions in the standard Kitaev model without defects.
\end{remark}

\subsection{Towards local projectors: Symmetric separability idempotents for bicomodule algebras} \label{subsec:symm-sep-idem}

Before we proceed to use the bimodule structures on the state space $\statespace$ defined in Subsection \ref{subsec:local-reps-on-state-space} to define commuting local projectors on the vector space $\statespace$, we need to invoke another algebraic ingredient.

The standard Kitaev construction for a semisimple Hopf algebra $H$ makes use of the Haar integrals of $H$ and of $H^*$, in order to define commuting local projectors on the state space via the actions of $H$ and $H^*$.
The \emph{Haar integral} of a semisimple Hopf algebra $H$ over $\kk$ is the unique element $\ell \in H$ satisfying $x \ell = \eps(x) \ell = \ell x$ for all $x \in H$ and $\eps(\ell)=1$.
This means that $\ell$ is the central idempotent which projects to the $H$-invariants: for any $H$-module $M$, we have $\ell.M = M^H := \{ m \in M \mid h.m = \eps(h) m \quad\forall h\in H \}$.
Furthermore, $\ell \in H$ is cocommutative, i.e.\ $\ell_{(1)} \ot \ell_{(2)} = \ell_{(2)} \ot \ell_{(1)}$ in Sweedler notation.
The idempotence, centrality and cocommutativity of the Haar integral are crucial in showing that the Haar integral gives rise to commuting local projectors in the standard Kitaev construction \cite{buerschaperEtAl}.

In our setting, instead of a semisimple Hopf algebra acting on the state space, we have, for each vertex $v\in\Sigma^0$, a bimodule structure on the state space over a semisimple (bi-)comodule algebra $K_{\halfedges{v}}$.
Hence, we need a notion replacing the Haar integral, that works in this setting.
Our main insight is that the suitable generalization of the Haar integral to our setting is the unique symmetric separability idempotent, which exists for any semisimple algebra over an algebraically closed field $\kk$ with characteristic zero.
\begin{definition} \label{def:symm-sep-idem}
Let $A$ be an algebra over a field $\kk$.
A \emph{symmetric separability idempotent} for $A$ is an element $p \in A \ot A$, which we write as $p = p^1 \ot p^2 \in A \ot A$ omitting the summation symbol, satisfying
\begin{align}
(x \cdot p^1) \ot p^2 &= p^1 \ot (p^2 \cdot x)	\quad\forall x\in A,	\label{eq:invariance-of-sep-idem}	\\
p^1\cdot p^2 &= 1,	\label{eq:normalization-of-sep-idem} \\
p^1 \ot p^2 &= p^2 \ot p^1, \qquad\qquad\text{(symmetry)}	\label{eq:symmetry-of-sep-idem}
\end{align}
where on both sides of equation \eqref{eq:invariance-of-sep-idem} and in equation \eqref{eq:normalization-of-sep-idem} we are using the multiplication in $A$.

The properties \eqref{eq:invariance-of-sep-idem} and \eqref{eq:normalization-of-sep-idem} immediately imply that $p^1 \ot p^2$ is an idempotent when seen as an element of the enveloping algebra $A \ot A^\opp$.
\end{definition}
\begin{remarks} \ \begin{enumerate}[ref={\theremark.(\arabic*)}]
\item
The structure of a separability idempotent, i.e. an element $p^1\ot p^2 \in A \ot A$ satisfying \eqref{eq:invariance-of-sep-idem} and \eqref{eq:normalization-of-sep-idem}, is equivalent to an $A$-bimodule map $s : A \lto A \ot A$ that is a section of the multiplication $m : A \ot A \lto A$, by defining $s(x) := p^1 \ot p^2 x$ for all $x\in A$.
An algebra endowed with such a structure is called \emph{separable} and, in general, such a separability structure might not exist or be unique.
A symmetric separability structure, however, is always unique -- see the end of the proof of Proposition \ref{prop:symm-sep-idem-unique}.
\item
Representation-theoretically, a separability idempotent $p^1 \ot p^2 \in A \ot A^\opp$ plays the role of projecting to the subspace of invariants for any $A$-bimodule $M$.
Indeed, due to property \eqref{eq:invariance-of-sep-idem}, one has
\[ p^1.M.p^2 = M^A := \{ m\in M \mid a.m = m.a \quad\forall a\in A \} \subseteq M. \]
This is in analogy to the Haar integral $\ell \in H$ of a semisimple Hopf algebra $H$ which projects to the invariants $\ell.M = M^H := \{ m \in M \mid h.m = \eps(h) m \quad\forall h\in H \}$ of any left $H$-module $M$.
\end{enumerate}
\end{remarks}

Just as every finite-dimensional semisimple Hopf algebra over a field $\kk$ has a unique Haar integral, for every finite-dimensional semisimple $\kk$-algebra there exists a unique symmetric separability idempotent:
\begin{proposition}[\cite{aguiar}]
	\label{prop:symm-sep-idem-unique}
Let $A$ be a finite-dimensional semisimple algebra over a field $\kk$ which is algebraically closed and of characteristic zero.
Then there exists a unique symmetric separability idempotent $p^1 \ot p^2 \in A \ot A^\opp$ for $A$.
\end{proposition}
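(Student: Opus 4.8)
The plan is to prove existence and uniqueness separately, exploiting the structure theorem for semisimple algebras over an algebraically closed field. Since $\kk$ is algebraically closed, the Artin--Wedderburn theorem gives a decomposition $A \cong \bigoplus_{i=1}^r \Mat{n_i}{\kk}$ as a direct product of matrix algebras. Because all three defining properties \eqref{eq:invariance-of-sep-idem}, \eqref{eq:normalization-of-sep-idem} and \eqref{eq:symmetry-of-sep-idem} are stated blockwise and the blocks are mutually orthogonal ideals, I would first reduce to the case of a single matrix block $A = \Mat{n}{\kk}$: a symmetric separability idempotent for $A$ must decompose as a sum of symmetric separability idempotents, one supported in each $\Mat{n_i}{\kk} \ot \Mat{n_i}{\kk}$, so it suffices to treat one factor and take the direct sum.

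For a single matrix algebra $A = \Mat{n}{\kk}$, I would exhibit the candidate explicitly. Writing $(e_{ij})$ for the matrix units, the element
\begin{equation*}
p \;=\; \frac{1}{n} \sum_{i,j=1}^n e_{ij} \ot e_{ji}
\end{equation*}
is the natural guess (it is the normalized Casimir-type element). One then checks the three axioms directly: property \eqref{eq:normalization-of-sep-idem} follows from $\frac{1}{n}\sum_{i,j} e_{ij}e_{ji} = \frac{1}{n}\sum_{i,j} e_{ii} = \frac{1}{n}\cdot n \sum_i e_{ii} = 1$; the symmetry \eqref{eq:symmetry-of-sep-idem} is manifest from the symmetric rule $e_{ij}\ot e_{ji}$ under swapping the two tensor factors (relabel $i \leftrightarrow j$); and property \eqref{eq:invariance-of-sep-idem} is a short computation on matrix units, verifying $(e_{kl}\cdot p^1)\ot p^2 = p^1 \ot (p^2 \cdot e_{kl})$ for all $k,l$ and extending by linearity. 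Here the hypothesis $\characteristic \kk = 0$ (or more generally that $n$ is invertible, which semisimplicity guarantees) is what allows the normalizing factor $1/n$.

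For uniqueness, I would argue as indicated in the remark following Definition \ref{def:symm-sep-idem}. Suppose $p = p^1 \ot p^2$ and $q = q^1 \ot q^2$ are two symmetric separability idempotents. Viewing them as idempotents in the enveloping algebra $A \ot A^\opp$, the invariance property \eqref{eq:invariance-of-sep-idem} says precisely that each is a bimodule element, i.e. central in $A\ot A^\opp$ in the appropriate sense. The standard trick is to compute the ``product'' of the two using the bimodule/section interpretation: invariance \eqref{eq:invariance-of-sep-idem} lets one slide elements of $A$ across the tensor sign, and combined with normalization \eqref{eq:normalization-of-sep-idem} and symmetry \eqref{eq:symmetry-of-sep-idem} one obtains a chain of equalities
\begin{equation*}
p^1 \ot p^2 = p^1 q^1 q^2 \ot p^2 = p^1 q^1 \ot q^2 p^2 = q^1 \ot q^2 p^1 p^2 = q^1 \ot q^2,
\end{equation*}
forcing $p = q$. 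The main subtlety to get right is the bookkeeping in this computation: one must insert $1 = q^1 q^2$ and $1 = p^1 p^2$ at the correct moments and apply symmetry to move factors between the two tensor legs, each step justified by \eqref{eq:invariance-of-sep-idem}. This is the step I expect to be the delicate one, since it is where all three axioms are used simultaneously and where the symmetry condition is genuinely needed (a non-symmetric separability idempotent need not be unique). As the cited reference \cite{aguiar} establishes this result, I would present the argument as a verification along these lines.
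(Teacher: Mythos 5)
Your existence argument is correct, and it takes a genuinely different route from the paper: you reduce via Artin--Wedderburn to a single matrix block and exhibit the normalized Casimir element $\frac{1}{n}\sum_{i,j} e_{ij}\ot e_{ji}$ explicitly, whereas the paper constructs the idempotent basis-freely as $\sum_i p^1_i \ot p^2_i$ for a pair of dual bases with respect to the nondegenerate trace pairing $T(a\ot b) = \tr_A(L_{a\cdot b})$, equivalently as the preimage of $\id{A}$ under the induced isomorphism $A \ot A \cong \End{\kk}{A}$. The two constructions yield the same element, but the trace-form description is the one the paper reuses later in the proof of Proposition~\ref{prop:symm-sep-idem-coinvariant}. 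One parenthetical slip: semisimplicity does \emph{not} guarantee that $n$ is invertible --- $\Mat{p}{\kk}$ is semisimple over a field of characteristic $p$ yet admits no symmetric separability idempotent; it is the characteristic-zero hypothesis that does the work there.

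The uniqueness argument, however, has a genuine gap at the second equality of your chain, $p^1 q^1 q^2 \ot p^2 = p^1 q^1 \ot q^2 p^2$. This is not an instance of any axiom: invariance \eqref{eq:invariance-of-sep-idem} only slides an element from the \emph{outer} left of $p^1$ to the \emph{outer} right of $p^2$ (namely $x p^1 \ot p^2 = p^1 \ot p^2 x$); moving a factor from the inner right end of the first leg to the inner left end of the second leg is the defining relation of $A \ot_A A$, not an identity in $A \ot_\kk A$. In fact this step \emph{is} the statement being proved: its left-hand side equals $p^1 \ot p^2$ by normalization \eqref{eq:normalization-of-sep-idem} of $q$, while its right-hand side equals $q^1 \ot q^2$ by your (correct) third and fourth equalities. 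Note also that your displayed chain never invokes symmetry \eqref{eq:symmetry-of-sep-idem}, and no such chain can be valid, since --- as you yourself remark --- uniqueness fails for non-symmetric separability idempotents. The paper avoids this trap by inserting $1 = q^1 q^2$ on the \emph{outer} side, $p^1 \ot p^2 = q^1 q^2 p^1 \ot p^2 = q^1 p^1 \ot p^2 q^2$, and then using symmetry explicitly three times (to swap $q^1$ and $q^2$, to rewrite $p^2 p^1$ as $p^1 p^2$, and to flip the final result). Alternatively, your inner-sliding step can be legitimized, but only by first deriving from \eqref{eq:invariance-of-sep-idem} and \eqref{eq:symmetry-of-sep-idem} the extra rule $p^1 x \ot p^2 = p^1 \ot x p^2$ (apply the flip to invariance and use symmetry of $p$), together with $q^2 q^1 = 1$ (symmetry of $q$); either way, the symmetry axiom must enter at concrete, identifiable steps --- which is precisely the bookkeeping your proposal flags as delicate but does not carry out.
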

\begin{proof}
For a more detailed proof, see \cite[Thm.~3.1,~Cor.~3.1.1]{aguiar}.
Here we recall the main idea that the unique symmetric separability idempotent can be described in terms of the trace form on $A$, because we will use this description in Proposition \ref{prop:symm-sep-idem-coinvariant}.

Due to semisimplicity, the following symmetric bilinear pairing on $A$ is non-degenerate:
\begin{align*}
T : A \ot A &\lto \kk, \\
a \ot b &\lmapsto t(a \cdot b) := \tr_A (L_{a\cdot b}),
\end{align*}
defined in terms of the trace form
where $L_?$ denotes the left multiplication of $A$.
In fact, this non-degenerate bilinear pairing turns $A$ into a symmetric special Frobenius algebra.
Consider the isomorphism $\#_T : A \simto A^*, a \mapsto t(a \cdot -),$ induced by this non-degenerate bilinear pairing.
This is an isomorphism of $A$-bimodules.
It induces an isomorphism $A \ot A \simto A^* \ot A \cong \End{\kk}{A}$.
Consider the pre-image $p \in A \ot A$ of the identity $\id{A}$ under this isomorphism.
As usual, we write an element $p \in A\ot A$ as $p = p^1 \ot p^2$, omitting the summation symbol.
In fact, if we choose a basis $(p^1_i)_i$ for $A$ and let $(p^2_i)_i$ be its dual basis of $A$ with respect to the non-degenerate pairing $T$, then $p^1 \ot p^2$ is the sum $\sum_i p^1_i \ot p^2_i$.
With this definition of $p^1 \ot p^2 \in A \ot A$ it is straightforward to verify the defining properties \eqref{eq:invariance-of-sep-idem}, \eqref{eq:normalization-of-sep-idem} and \eqref{eq:symmetry-of-sep-idem} of a symmetric separability idempotent.

To prove that the symmetric separability idempotent is unique, let $p^1 \ot p^2, q^1 \ot q^2 \in A \ot A^\opp$ be any two symmetric separability idempotents for $A$.
Then they are equal by the following computation:
\begin{align*}
	p^1 \ot p^2
&\stackrel{\eqref{eq:normalization-of-sep-idem}}{=}	q^1 q^2 p^1 \ot p^2
\stackrel{\eqref{eq:invariance-of-sep-idem}}{=}		q^1 p^1 \ot p^2 q^2
\stackrel{\eqref{eq:symmetry-of-sep-idem}}{=}		q^2 p^1 \ot p^2 q^1		\\
&\stackrel{\eqref{eq:invariance-of-sep-idem}}{=}	q^2 \ot p^2 p^1 q^1
\stackrel{\eqref{eq:symmetry-of-sep-idem}}{=}		q^2 \ot p^1 p^2 q^1
\stackrel{\eqref{eq:normalization-of-sep-idem}}{=}	q^2 \ot q^1
\stackrel{\eqref{eq:symmetry-of-sep-idem}}{=}		q^1 \ot q^2,
\end{align*}
using the defining properties \eqref{eq:invariance-of-sep-idem}, \eqref{eq:normalization-of-sep-idem} and \eqref{eq:symmetry-of-sep-idem}.
\end{proof}

\begin{example} \label{ex:haar-int-as-symm-sep-idem}
Let $H$ be a finite-dimensional semisimple Hopf algebra over $\kk$ with Haar integral $\ell \in H$.
Then the symmetric separability idempotent for $H$ is $\ell_{(1)} \ot S(\ell_{(2)}) \in H \ot H^\opp$.

Indeed, the invariance property of the Haar integral, $x \ell = \eps(x) \ell$ for all $x\in H$, implies the corresponding invariance property \eqref{eq:invariance-of-sep-idem} of $\ell_{(1)} \ot S(\ell_{(2)})$.
The normalization $\eps(\ell) = 1$ of the Haar integral implies the corresponding normalization property \ref{eq:normalization-of-sep-idem} for the separability idempotent.
Finally, using that the Haar integral is two-sided, which implies $S(\ell) = \ell$, it can be shown that $\ell_{(1)} \ot S(\ell_{(2)})$ is symmetric.

Hence we see that, in the sense of this example, the symmetric separability idempotent of a semisimple algebra generalizes the Haar integral of a semisimple Hopf algebra.
\end{example}

In our construction of a Kitaev model, however, we are not only dealing with semisimple algebras, but semisimple algebras together with a compatible bicomodule structure.
On the other hand, the Haar integral $\ell \in H$ has the property of being cocommutative, $\ell_{(1)} \ot \ell_{(2)} = \ell_{(2)} \ot \ell_{(1)}$, which is crucial in showing that it gives rise to commuting projectors in \cite{buerschaperEtAl}
and we have not exhibited an analogous property of the symmetric separability idempotent.
In the following proposition we prove such a property, which holds for the symmetric separability idempotent of a semisimple (bi-)comodule algebra and which generalizes the cocommutativity of the Haar integral, see Example \ref{ex:coinv-of-reg-symm-sep-idem}.

\begin{proposition}\label{prop:symm-sep-idem-coinvariant}
Let $H$ be a semisimple Hopf algebra over $\kk$ and let $K$ be a semisimple right $H$-comodule algebra with symmetric separability idempotent $p^1 \ot p^2 \in K \ot K^\opp$.
Consider the right $H$-coaction on the tensor product $K \ot K^\opp$: 
\begin{align*}
K \ot K^\opp &\lto K \ot K^\opp \ot H,	\\
k \ot k' &\lmapsto k_{(0)} \ot k'_{(0)} \ot k_{(1)} k'_{(1)}	.
\end{align*}
Then $p^1 \ot p^2 \in K \ot K^\opp$ is an $H$-coinvariant element of $K \ot K^\opp$, i.e.\ $p^1_{(0)} \ot p^2_{(0)} \ot p^1_{(1)} p^2_{(1)} = p^1 \ot p^2 \ot 1_H \in K\ot K^\opp \ot H$, and this is equivalent to
\begin{equation}\label{eq:symm-sep-idem-cyclic} p^1_{(0)} \ot p^1_{(1)} \ot p^2 = p^1 \ot S(p^2_{(1)}) \ot p^2_{(0)} \in K \ot H \ot K^\opp	.  \end{equation}
Analogously, if $K$ is a left $H$-comodule algebra, then
\begin{equation}\label{eq:symm-sep-idem-cyclic_left-version}
p^1_{(0)} \ot p^1_{(-1)} \ot p^2 = p^1 \ot S(p^2_{(-1)}) \ot p^2_{(0)} \in K \ot H \ot K^\opp	.  \end{equation}
\end{proposition}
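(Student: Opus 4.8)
The plan is to identify the symmetric separability idempotent with the Casimir (dual-basis) element of the non-degenerate trace form used in the proof of Proposition~\ref{prop:symm-sep-idem-unique}, and then to show that this element is coinvariant because the trace form is a morphism of $H$-comodules. Concretely, recall from that proof that $p^1 \ot p^2 = \sum_i p^1_i \ot p^2_i$, where $(p^1_i)_i$ and $(p^2_i)_i$ are dual bases of $K$ with respect to the symmetric non-degenerate pairing $T(a \ot b) = t(a \cdot b)$, with $t(x) = \tr_K(L_x)$ and $L_x$ left multiplication; here semisimplicity of $K$ is what guarantees that $T$ is non-degenerate, so that these dual bases exist. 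The multiplication $m \colon K \ot K \to K$ is a morphism of right $H$-comodules precisely because $K$ is a comodule algebra. Hence, if I can show that $t \colon K \to \kk$ is coinvariant, i.e.\ $t(k_{(0)})\, k_{(1)} = t(k)\, 1_H$ for all $k \in K$ (with $\kk$ carrying the trivial coaction), then $T = t \circ m$ is a coinvariant pairing for the coaction $k \ot k' \mapsto k_{(0)} \ot k'_{(0)} \ot k_{(1)} k'_{(1)}$ appearing in the statement.

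The heart of the argument is the coinvariance of the trace form, and this is the step where semisimplicity of $H$ is essential. I would write $t = \tr \circ L$ with $L \colon K \to \End{\kk}{K}$ the left regular representation. A short computation using the counit and antipode axioms shows that $L$ is a morphism of $H$-comodules for the natural coaction on $\End{\kk}{K} \cong K \ot K^*$ induced by the duality in the category of right $H$-comodules; explicitly one finds $\rho(L_k) = L_{k_{(0)}} \ot k_{(1)}$, and this step does not use $S^2=\id$. It then remains to see that the ordinary trace $\tr \colon \End{\kk}{K} \to \kk$ is itself coinvariant. In a basis $(e_i)_i$ with matrix coaction $\rho(e_i) = \sum_j e_j \ot t_{ji}$, coinvariance of $\tr$ amounts to the identity $\sum_i t_{ik}\, S(t_{ji}) = \delta_{jk}\, 1_H$; expanding the left-hand side with coassociativity and the counit reduces it to $\sum h_{(2)} S(h_{(1)}) = \eps(h)\, 1_H$, which holds exactly when $S^2 = \id$. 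Since $H$ is a semisimple Hopf algebra over a field of characteristic zero, the antipode is involutive by Larson--Radford, so $S^2 = \id$ and the identity holds. Equivalently, for $S^2=\id$ the ordinary trace coincides with the categorical trace in the rigid monoidal category of $H$-comodules, which is automatically a morphism to the unit. I expect this to be the main obstacle; the remaining steps are formal.

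Given coinvariance of $T$, the coinvariance of $p^1 \ot p^2$ follows because the copairing associated to a non-degenerate coinvariant pairing is again coinvariant: the dual-basis element is the unique solution of the snake identity $T(a \ot p^1)\, p^2 = a$, which once $T$ is a morphism becomes an equation between comodule maps, so its solution is forced to be coinvariant as well. This yields exactly $p^1_{(0)} \ot p^2_{(0)} \ot p^1_{(1)} p^2_{(1)} = p^1 \ot p^2 \ot 1_H$.

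Finally, for the equivalence with \eqref{eq:symm-sep-idem-cyclic} I would argue directly: coacting once more on the $K^\opp$-factor of the coinvariance relation, using coassociativity to split the resulting $H$-leg, and then contracting two $H$-legs through the antipode axiom turns the coinvariance relation into \eqref{eq:symm-sep-idem-cyclic}; since $S$ is invertible (indeed involutive here) the manipulation is reversible, which gives the asserted equivalence in both directions. The left-comodule version \eqref{eq:symm-sep-idem-cyclic_left-version} then follows by the same reasoning applied to $K$ regarded as a right $H^\coopp$-comodule algebra, using that the antipode of $H^\coopp$ equals $S^{-1} = S$ because $S^2 = \id$.
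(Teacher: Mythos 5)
Your proposal is correct and follows essentially the same route as the paper: both characterize $p^1 \ot p^2$ as the dual-basis element of the non-degenerate trace pairing $t \circ \mu$ and reduce everything to the statement that $t$ is a morphism of $H$-comodules, which rests on the involutivity of the antipode of the semisimple Hopf algebra $H$ (your decomposition $t = \tr \circ L$ is the paper's $t = \ev_K \circ (\mu \ot \id{K^*}) \circ (\id{K} \ot \coev_K)$ in disguise, and your matrix-coefficient identity is exactly the colinearity of $\ev_K$). The only cosmetic difference is the order of deduction: you obtain coinvariance of $p^1 \ot p^2$ directly from the coinvariant pairing and then derive \eqref{eq:symm-sep-idem-cyclic} from it, whereas the paper first establishes \eqref{eq:symm-sep-idem-cyclic} via colinearity of the inverse isomorphism $K^* \lto K$ and then deduces coinvariance.
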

\begin{proof}
Without loss of generality we only show the case where $K$ is a right $H$-comodule algebra.
Recall from the proof of Proposition \ref{prop:symm-sep-idem-unique} that the symmetric separability idempotent $p^1 \ot p^2 \in K \ot K^\opp$ for $K$ can be characterized in terms of the multiplication and the trace form $t : K \lto \kk$ on $K$, namely by $t(p^1\cdot x) p^2 = x \ \forall x\in K$, as explained in the proof of Proposition \ref{prop:symm-sep-idem-unique}.
Another way of phrasing this is that the map $K^* \lto K$ defined by $f \lmapsto f(p^1) p^2$ is the inverse of the isomorphism $K \lto K^*, k \lmapsto t(?\cdot k)$ induced by the non-degenerate pairing $t \circ \mu$, where $\mu : K \ot K \lto K$ is the multiplication on $K$.

The crucial step for the present proof is the observation that the multiplication and the trace form on $K$ are morphisms of $H$-comodules if $K$ is an $H$-comodule algebra.
For the multiplication this means that $x_{(0)} y_{(0)} \ot x_{(1)} y_{(1)} = (xy)_{(0)} \ot (xy)_{(1)} \ \forall x,y \in K$, which holds by definition of a comodule algebra, see Definition \ref{def:bicomodule-algebra}.
As for the $H$-colinearity of the trace form, note that $t = \ev_K \circ (\mu \ot \id{K^*}) \circ (\id{K} \ot \coev_K)$, where $\mu: K\ot K \to K$ denotes the multiplication, and $\coev_K : \kk \lto K \ot K^*$ and $\ev_K : K \ot K^* \lto \kk$ are the standard coevaluation and evaluation morphisms for vector spaces.
Due the involutivity of the antipode $S$ of $H$, both $\ev_K$ and $\coev_K$ are morphisms of right $H$-comodules for the $H$-comodule structure on the dual $K^*$ given by $K^* \lto K^* \ot H, \varphi \lmapsto \varphi_{(0)} \ot \varphi_{(1)}$, where $\varphi_{(0)}(x) \varphi_{(1)} := \varphi(x_{(0)}) S(x_{(1)})$ for all $x \in K$.
(We are here implicitly using the canonical trivial pivotal structure on the tensor category of right $H$-comodules, which exists due to the involutivity of the antipode of $H$.)
Since therefore the trace form $t$ is composed only of morphisms of right $H$-comodules, it is itself a morphism of right $H$-comodules, i.e.\
\begin{equation}\label{eq:trace-form-colinear}	t(k_{(0)}) k_{(1)} = t(k) 1_H \quad \forall k\in K . \end{equation}

As a consequence, the isomorphism $K \lto K^*, k \lmapsto t(?\cdot k)$ induced by the pairing $t\circ \mu$ is an isomorphism of $H$-comodules.
Indeed, for all $x \in K$ one has $t(x k_{(0)}) k_{(1)} = t(x_{(0)} k_{(0)}) S(x_{(2)}) x_{(1)} k_{(1)} \stackrel{\eqref{eq:trace-form-colinear}}{=} t(x_{(0)} k) S(x_{(1)}) \eqbydef (t(?\cdot k))_{(0)}(x) (t(?\cdot k))_{(1)}$. 

This immediately implies that the inverse map, $K^* \lto K, \varphi \lmapsto \varphi(p^1)p^2$, must also be a morphism of $H$-comodules, which spelled out means that $\varphi(p^1_{(0)})p^2 \ot S(p^1_{(1)}) \eqbydef \varphi_{(0)}(p^1)p^2\ot \varphi_{(1)} = \varphi(p^1) p^2_{(0)} \ot p^2_{(1)}$ for all $\varphi \in K^*$.
This implies the equation \eqref{eq:symm-sep-idem-cyclic} of the claim.
To show that this is equivalent to $p^1 \ot p^2 \in K\ot K^\opp$ being $H$-coinvariant, we compute $$p^1_{(0)} \ot p^2_{(0)} \ot p^1_{(1)} p^2_{(1)} \stackrel{\eqref{eq:symm-sep-idem-cyclic}}{=} p^1 \ot p^2_{(0)} \ot S(p^2_{(1)}) p^2_{(2)} = p^1 \ot p^2 \ot 1_H . $$
\end{proof}
\begin{example} \label{ex:coinv-of-reg-symm-sep-idem}
Let $H$ be a semisimple Hopf algebra and consider it as the regular $H$-bicomodule algebra, as in Example \ref{ex:reg-bicomodule-alg}.
Recall that for $H$ the symmetric separability idempotent is $p^1 \ot p^2 = \ell_{(1)} \ot S(\ell_{(2)}) \in H \ot H$.
Let us spell out Proposition \ref{prop:symm-sep-idem-coinvariant} for the left and right $H$-comodule structures on the regular bicomodule algebra $H$.
Equation \eqref{eq:symm-sep-idem-cyclic} boils down to the equation $(\ell_{(1)})_{(1)} \ot (\ell_{(1)})_{(2)} \ot S(\ell_{(3)}) = \ell_{(1)} \ot S(S(\ell_{(2)})_{(2)}) \ot S(\ell_{(2)})_{(1)}$.
But due to $S^2 = \id{H}$ both sides of the equation are equal to $\ell_{(1)} \ot \ell_{(2)} \ot S(\ell_{(3)})$.
On the other hand, equation \eqref{eq:symm-sep-idem-cyclic_left-version} boils down to the equation $(\ell_{(1)})_{(2)} \ot (\ell_{(1)})_{(1)} \ot S(\ell_{(3)}) = \ell_{(1)} \ot S(S(\ell_{(2)})_{(1)}) \ot S(\ell_{(2)})_{(2)}$, which in turn due to $S^2 = \id{H}$ simplifies to $\ell_{(2)} \ot\ell_{(1)} \ot S(\ell_{(3)}) = \ell_{(1)} \ot \ell_{(3)} \ot S(\ell_{(2)})$.
This is equivalent to the cocommutativity property $\ell_{(1)} \ot \ell_{(2)} = \ell_{(2)} \ot \ell_{(1)}$.

Hence we have shown that the coinvariance property of the symmetric separability idempotent for a bicomodule algebra, proven in Proposition \ref{prop:symm-sep-idem-coinvariant}, is the appropriate analogue of the cocommutativity of the Haar integral.
In the proof of Lemma \ref{lem:symm-sep-idempotents-commute-in-drinfeld-double} we will use it in a crucial way, on the way towards proving in Theorem \ref{thm:vertex-and-plaquette-operators-commute} that symmetric separability idempotents allow for defining commuting projectors.
\end{example}

\begin{lemma} \label{lem:symm-sep-idempotents-commute-in-drinfeld-double}
Let $H$ be a semisimple Hopf algebra over $\kk$ and let $K$ be a semisimple left $H$-comodule algebra and $A$ a semisimple left $H$-module algebra.
Let $p^1 \ot p^2 \in K \ot K^\opp$ and $\pi^1 \ot \pi^2 \in A \ot A^\opp$ be the symmetric separability idempotents for $K$ and $A$, respectively.

Then $(1_A\ot p^1) \ot (1_A\ot p^2)$ and $(\pi^1 \ot 1_K) \ot (\pi^2 \ot 1_K)$ commute in the algebra $(\vtxal{A}{K}) \ot (\vtxal{A}{K})^\opp$, where $\vtxal{A}{K}$ is the crossed product algebra defined in Definition \ref{def:crossed-product-algebra}.
\end{lemma}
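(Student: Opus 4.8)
The plan is to reduce the statement to a single identity in the underlying vector space $A\ot K\ot A\ot K$ of $(\vtxal{A}{K})\ot(\vtxal{A}{K})^\opp$, and then to verify that identity from the equivariance properties of the two separability idempotents. Write $B:=\vtxal{A}{K}$; recall that multiplication in $B$ is $(a\ot k)(a'\ot k')=a(k_{(-1)}.a')\ot k_{(0)}k'$ and that in $B\ot B^\opp$ the second tensor leg multiplies in the opposite order. First I would compute both products directly. Using that $1_K$ is coinvariant, i.e.\ its coaction is $1_K\mapsto 1_H\ot 1_K$, and that $1_H.1_A=1_A$, one finds
\[ \big((1_A\ot p^1)\ot(1_A\ot p^2)\big)\cdot\big((\pi^1\ot 1_K)\ot(\pi^2\ot 1_K)\big)=(p^1_{(-1)}.\pi^1)\ot p^1_{(0)}\ot\pi^2\ot p^2 \]
and, in the opposite order,
\[ \big((\pi^1\ot 1_K)\ot(\pi^2\ot 1_K)\big)\cdot\big((1_A\ot p^1)\ot(1_A\ot p^2)\big)=\pi^1\ot p^1\ot(p^2_{(-1)}.\pi^2)\ot p^2_{(0)} . \]
Thus the lemma is equivalent to the equality of these two elements of $A\ot K\ot A\ot K$; call it $(\star)$.

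Next I would bring in the two inputs. For $K$ I would apply the left-comodule cyclic identity \eqref{eq:symm-sep-idem-cyclic_left-version} of Proposition \ref{prop:symm-sep-idem-coinvariant}, $p^1_{(0)}\ot p^1_{(-1)}\ot p^2=p^1\ot S(p^2_{(-1)})\ot p^2_{(0)}$, to the $K$-data sitting in the first action-slot and in the two $K$-legs of the left-hand side of $(\star)$. This rewrites that side as $(S(p^2_{(-1)}).\pi^1)\ot p^1\ot\pi^2\ot p^2_{(0)}$, so that now $p^1$ and $p^2_{(0)}$ occupy the $K$-legs while the antipode of the coaction of $p^2$ acts on the first $A$-leg. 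Comparing with the right-hand side of $(\star)$, it then suffices to prove the single identity
\[ (S(h).\pi^1)\ot\pi^2=\pi^1\ot(h.\pi^2)\qquad\text{for all }h\in H, \]
applied with $h=p^2_{(-1)}$, since the surrounding $K$-legs are inert under the summation defining $\pi^1\ot\pi^2$.

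To establish this I would first show that $\pi^1\ot\pi^2$ is $H$-invariant, $h_{(1)}.\pi^1\ot h_{(2)}.\pi^2=\eps(h)\,\pi^1\ot\pi^2$. Here I would use the trace-form description of the symmetric separability idempotent from the proof of Proposition \ref{prop:symm-sep-idem-unique}: $\pi^1\ot\pi^2$ is the copairing dual to the non-degenerate pairing $a\ot b\mapsto t(ab)$. Since $A$ is a left $H$-module algebra the multiplication is $H$-linear, and writing $L_a$ for left multiplication and $\rho(h)$ for the action one checks $L_{h.a}=\sum_{(h)}\rho(h_{(1)})\,L_a\,\rho(S(h_{(2)}))$; taking the trace and using its cyclicity gives $t(h.a)=\tr\!\big(L_a\,\rho(\textstyle\sum_{(h)}S(h_{(2)})h_{(1)})\big)=\eps(h)\,t(a)$, the last equality because $\sum S(h_{(2)})h_{(1)}=\eps(h)1_H$ holds precisely when the antipode is involutive — which is the case as $H$ is semisimple. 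Hence the pairing is $H$-invariant and so is its copairing, giving invariance of $\pi^1\ot\pi^2$. The antipode-twisted form then follows formally: one writes $\pi^1\ot h.\pi^2=\sum_{(h)}(S(h_{(1)})h_{(2)}).\pi^1\ot h_{(3)}.\pi^2=\sum_{(h)}S(h_{(1)}).(h_{(2)}.\pi^1)\ot h_{(3)}.\pi^2$ and applies invariance to the inner pair to collapse it to $(S(h).\pi^1)\ot\pi^2$.

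Finally I would assemble the pieces: substituting $h=p^2_{(-1)}$ in the twisted-invariance identity turns the rewritten left-hand side $(S(p^2_{(-1)}).\pi^1)\ot p^1\ot\pi^2\ot p^2_{(0)}$ into $\pi^1\ot p^1\ot(p^2_{(-1)}.\pi^2)\ot p^2_{(0)}$, which is exactly the right-hand side of $(\star)$, proving that the two elements commute. I expect the main obstacle to be the invariance of $\pi^1\ot\pi^2$: in contrast with the comodule case of Proposition \ref{prop:symm-sep-idem-coinvariant}, here one must verify that the trace form of a module algebra over $H$ is $H$-invariant, and this is exactly the step where the involutivity of the antipode of the semisimple Hopf algebra $H$ is indispensable.
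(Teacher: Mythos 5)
Your proof is correct, and its skeleton coincides with the paper's: both compute the two products in $(\vtxal{A}{K}) \ot (\vtxal{A}{K})^\opp$, reduce the claim to an equality in $A \ot K \ot A \ot K$, and resolve it by first applying the left-comodule identity \eqref{eq:symm-sep-idem-cyclic_left-version} to $p^1 \ot p^2$ and then an antipode-twisted invariance property of $\pi^1 \ot \pi^2$; your two displayed products and the final substitution $h = p^2_{(-1)}$ match the paper's computation essentially verbatim (up to replacing $h$ by $S(h)$, harmless since $S^2 = \id{H}$). The one genuine divergence is how the $A$-side ingredient is obtained. The paper gets $(h.\pi^1) \ot \pi^2 = \pi^1 \ot (S(h).\pi^2)$ in a single line, by regarding the left $H$-module algebra $A$ as a right $H^*$-comodule algebra and citing equation \eqref{eq:symm-sep-idem-cyclic} of Proposition \ref{prop:symm-sep-idem-coinvariant} for the Hopf algebra $H^*$. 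You instead re-prove this special case from scratch: using the trace-form description of $\pi^1 \ot \pi^2$ from the proof of Proposition \ref{prop:symm-sep-idem-unique}, the identity expressing left multiplication by $h.a$ through conjugation by the $H$-action, cyclicity of the trace, and $S(h_{(2)})h_{(1)} = \eps(h) 1_H$ (equivalent to involutivity of $S$, which holds by semisimplicity), you obtain $H$-invariance of the trace pairing, hence of its copairing $\pi^1 \ot \pi^2$, and then derive the twisted form formally. Both routes are valid: the paper's is more economical, reusing a proposition it has already proven (where the involutivity of $S$ is hidden inside the colinearity of the trace form), while yours is self-contained on the module-algebra side, avoids passing through the dual Hopf algebra $H^*$, and makes explicit exactly where the involutivity of the antipode of a semisimple Hopf algebra is indispensable -- which is a fair and accurate diagnosis of the crux of the argument.
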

\begin{proof}
Due to the co-invariance of the symmetric separability idempotent of a semisimple comodule algebra over $\kk$, proven in Proposition \ref{prop:symm-sep-idem-coinvariant}, we have $$p^1_{(-1)} \ot p^1_{(0)} \ot p^2 \stackrel{\eqref{eq:symm-sep-idem-cyclic_left-version}}{=} S(p^2_{(-1)}) \ot p^1 \ot p^2_{(0)}$$ and $$(h.\pi^1) \ot \pi^2 = \pi^1 \ot (S(h).\pi^2)$$ for all $h \in H$, where the latter can be derived from equation \eqref{eq:symm-sep-idem-cyclic} by regarding $A$ as a right $H^*$-comodule algebra, which is equivalent to a left $H$-module algebra \cite{montgomery}.
By definition of the multiplication in $(\vtxal{A}{K}) \ot (\vtxal{A}{K})^\opp$ we have:
\[ (1_A\ot p^1) \ot (1_A\ot p^2) \cdot (\pi^1 \ot 1_K) \ot (\pi^2 \ot 1_K) = (p^1_{(-1)}.\pi^1 \ot p^1_{(0)}) \ot (\pi^2 \ot p^2) \]
and
\[ (\pi^1 \ot 1_K) \ot (\pi^2 \ot 1_K) \cdot (1_A\ot p^1) \ot (1_A\ot p^2) = (\pi^1 \ot p^1) \ot (p^2_{(-1)} . \pi^2 \ot p^2_{(0)}) \]
But the right-hand sides of these equations are equal by the following computation:
\begin{align*}
(p^1_{(-1)}.\pi^1 \ot p^1_{(0)}) \ot (\pi^2 \ot p^2) &= (S(p^2_{(-1)}).\pi^1 \ot p^1) \ot (\pi^2 \ot p^2_{(0)}) \\
&= (\pi^1 \ot p^1) \ot (S^2(p^2_{(-1)}).\pi^2 \ot p^2_{(0)}) \\
&= (\pi^1 \ot p^1) \ot (p^2_{(-1)}.\pi^2 \ot p^2_{(0)}) .
\end{align*}
\end{proof}

\subsection{Local commuting projector Hamiltonian from vertex and plaquette operators}
\label{subsec:vertex-plaquette-operators}

In this subsection we define on the vector space $\statespace$ assigned to a surface $\Sigma$ with a labelled cell decomposition a set of commuting local projectors and finally, in the spirit of Kitaev lattice models, a Hamiltonian on $\statespace$ as the sum of commuting projectors.

Recall that in Subsection \ref{subsec:local-reps-on-state-space} we have defined on $\statespace$ a $K_{\halfedges{v}}$-bimodule structure $\tild{A}_v$ for each vertex $v\in\Sigma^0$ and a $H_p^*$-bimodule structure $\tild{B}_{(p,v)}$ for each site $(p,v)$, $p\in\Sigma^2$, $v\in\plaqsites{p}$.
A $K_{\halfedges{v}}$-bimodule structure is equivalent to a left $(K_{\halfedges{v}} \ot K_{\halfedges{v}}^\opp)$-action on $\statespace$, so that specifying an element of the so-called enveloping algebra $(K_{\halfedges{v}} \ot K_{\halfedges{v}}^\opp)$ determines an endomorphism of $\statespace$.
By assumption, all bicomodule algebras $K_e$ labelling the cell decomposition of $\Sigma$ are semisimple and, hence, the tensor product $K_\halfedges{v}$ is semisimple and possesses a unique symmetric separability idempotent $p_v^1 \ot p_v^2 \in (K_{\halfedges{v}} \ot K_{\halfedges{v}}^\opp)$ according to Proposition \ref{prop:symm-sep-idem-unique}.

\begin{definition} \label{def:vertex-operator}
Let $v \in \Sigma^0$.
The \emph{vertex operator} for the vertex $v$ is the idempotent endomorphism of the state space $\statespace$
\begin{equation*}
A_v := \tild{A}_v (p_v^1\ot p_v^2) : \statespace \lto \statespace
\end{equation*}
given by acting with the unique symmetric separability idempotent $$p_v^1 \ot p_v^2 \in K_{\halfedges{v}} \ot K_{\halfedges{v}}^\opp$$ via the $K_{\halfedges{v}}$-bimodule structure $\tild{A}_v$, defined in Definition \ref{def:vertex-bimodule}.
\end{definition}
This operator is \emph{local} in the sense that it acts as the identity on all tensor factors in $\statespace = (\ot_{e\in\Sigma^1} K_e^*) \ot (\ot_{w\in\Sigma^0} Z_w)$ except for those associated to the vertex $v \in \Sigma^0$ and to the edges $e\in\Sigma_v^1$ incident to $v$.
Since the symmetric separability idempotent of a semisimple bicomodule algebra generalizes the Haar integral of a semisimple Hopf algebra, as explained in Subsection \ref{subsec:symm-sep-idem}, we see that the vertex operator defined here provides a suitable analogon to the vertex operators in the ordinary Kitaev model for a semisimple Hopf algebra.

\medskip

Next we want to define a projector on $\statespace$ for each plaquette $p \in \Sigma^2$ in analogy to the plaquette operators of the ordinary Kitaev model for a semisimple Hopf algebra $H$, which are defined by acting with the Haar integral of the dual Hopf algebra $H^*$.
In our construction, we have defined in Definition \ref{def:plaquette-bimodule} an $H_p^*$-bimodule structure $\tild{B}_{(p,v)}$ on $\statespace$ for every plaquette $p \in \Sigma^2$ with incident site $v\in\plaqsites{p}$ and we can again use this to define a projector $\tild{B}_{(p,v)}({\lambda_p}_{(1)} \ot S({\lambda_p}_{(2)}))$ on $\statespace$ by acting with the symmetric separability idempotent of the semisimple algebra $H_p^*$, which is ${\lambda_p}_{(1)} \ot S({\lambda_p}_{(2)}) \in H_p^* \ot (H_p^*)^\opp$, see Example \ref{ex:haar-int-as-symm-sep-idem}.
However note that, as opposed to the vertex operator here it is actually not necessary to invoke the concept of the symmetric separability idempotent, since $H_p^*$ is a Hopf algebra just as in the ordinary Kitaev model, and its symmetric separability idempotent is given by the Haar integral.

When considering the projector $\tild{B}_{(p,v)}({\lambda_p}_{(1)} \ot S({\lambda_p}_{(2)}))$ on $\statespace$, it seems that a priori it depends not only on the plaquette $p$ but also on the site $v \in \plaqsites{p}$ that we had to choose in Definition \ref{def:plaquette-bimodule} in order to define the bimodule structure $\tild{B}_{(p,v)}$.
Just like the plaquette operators in the ordinary Kitaev model, we will show that due to the properties of the Haar integral the projector only depends on the plaquette $p$: 

\begin{lemma}
Let $p \in \Sigma^2$.
If $\lambda_p \in H_p^*$ is the Haar integral of $H_p^*$,
then the endomorphism $$\tild{B}_{(p,v)}({\lambda_p}_{(1)} \ot S({\lambda_p}_{(2)})) : \statespace \lto \statespace$$ does not depend on the choice of the site $v \in \plaqsites{p}$.
\end{lemma}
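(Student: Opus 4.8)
The plan is to exhibit, for the specific element ${\lambda_p}_{(1)} \ot S({\lambda_p}_{(2)})$, a description of the operator $\tild{B}_{(p,v)}({\lambda_p}_{(1)} \ot S({\lambda_p}_{(2)}))$ in which the chosen site $v$ enters only as the starting point of the cyclic order on $\plaqsites{p}\cup\plaqedges{p}$, and then to invoke the cocommutativity of the Haar integral to conclude that this starting point is immaterial. Equivalently, since the boundary cycle of the disk $p$ alternates between sites and plaquette edges, one may reduce to comparing $\tild{B}_{(p,v)}$ with $\tild{B}_{(p,v'')}$ for two consecutive sites $v,v''$ separated by a single plaquette edge, as any two sites are joined by such elementary moves; but I would phrase the conclusion through the global cyclic symmetry, which handles all cases at once.

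First I would write out $\tild{B}_{(p,v)}({\lambda_p}_{(1)} \ot S({\lambda_p}_{(2)}))$ explicitly from Definition \ref{def:plaquette-bimodule}: the factor ${\lambda_p}_{(1)}$ acts on $Z_v$ through the left $H_p^*$-action coming from $H_p^*\subseteq C_v$, while the factor $S({\lambda_p}_{(2)})$ is comultiplied and spread, in the cyclic order starting right after $v$, over the remaining positions --- acting on each $Z_w$ with $w\neq v$ as the left action pulled back along $S$, and on each $K_e^*$ through the dualized $H_p$-comodule structure twisted by $\anti{?}{\plaqedge{p}{e}}$. Using coassociativity together with the identity $\Delta\circ S=(S\ot S)\circ\tau\circ\Delta$, where $\tau$ is the flip, I would repackage the whole operator as acting with the single fully iterated comultiplication $\Delta^{(N-1)}(\lambda_p)={\lambda_p}_{(1)}\ot\dots\ot{\lambda_p}_{(N)}$, with $N=\lvert\plaqsites{p}\rvert+\lvert\plaqedges{p}\rvert$, distributed over the $N$ positions of the boundary cycle.

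The decisive simplification is that, on each site $Z_w$ with $w\neq v$, the right action is the left action precomposed with $S$, and the factor it receives already carries an antipode from $S({\lambda_p}_{(2)})$; since $H_p$ is semisimple its antipode is involutive, so these two antipodes cancel and $Z_w$ is acted on by the \emph{plain} left $H_p^*$-action, exactly as $Z_v$ is. Thus all sites are treated uniformly, and each plaquette edge $K_e^*$ is acted on by a piece of $\lambda_p$ through its comodule structure with the sign $\plaqedge{p}{e}$, which does not depend on $v$; the identity $S(\lambda_p)=\lambda_p$, following from two-sidedness and uniqueness of the Haar integral, absorbs any residual antipode on the edge factors. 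After these cancellations the operator depends on $v$ only through which of the $N$ tensor factors of $\Delta^{(N-1)}(\lambda_p)$ is assigned to the first position of the cycle. By cocommutativity, $\Delta^{(N-1)}(\lambda_p)$ is invariant under arbitrary permutations of its factors, in particular under the cyclic rotation that relates $v$ and $v''$; hence the operator is unchanged, proving site-independence.

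The step I expect to be the main obstacle is the honest bookkeeping in the repackaging: keeping track of the antipode supplied by the separability idempotent $S({\lambda_p}_{(2)})$, the order-reversing antipodes produced when comultiplying $S({\lambda_p}_{(2)})$ through $\Delta\circ S$, the pullback-along-$S$ built into the right action on the site modules $Z_w$, and the orientation signs $\plaqedge{p}{e}$ on the edges, and verifying that they combine --- via $S^2=\id{H_p^*}$ and cocommutativity --- into the manifestly base-point-symmetric distribution. This is the defect generalization of the corresponding computation for the ordinary plaquette operator in \cite{buerschaperEtAl}, where cocommutativity of the Haar integral likewise yields independence of the base site.
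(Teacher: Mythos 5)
Your proposal follows essentially the same route as the paper's proof: unpack Definition \ref{def:plaquette-bimodule}, use $S^2=\id{}$ to absorb all antipodes into ``receiving maps'' at the individual boundary positions that do not depend on $v$, recognize the resulting operator as the iterated coproduct of $\lambda_p$ distributed cyclically around the boundary of $p$ starting at $v$, and conclude by cocommutativity of the Haar integral. The paper merely streamlines the unpacking by first pulling $\tild{B}_{(p,v)}$ back along the algebra map $(\id{}\ot S)\circ\Delta : H_p^* \to H_p^*\ot (H_p^*)^\opp$ and then observing that the pulled-back action of any cocommutative element changes only by a cyclic shift of the multiple coproduct when the starting site is changed.

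Two of your bookkeeping claims are, however, stated incorrectly, although neither is needed in the form you state it. First, cocommutativity of $\lambda_p$ does \emph{not} make $\Delta^{(N-1)}(\lambda_p)$ invariant under arbitrary permutations of its tensor factors. For $H_p=\kk G$ with $G$ non-abelian, the Haar integral of $H_p^*=\kk^G$ is $\delta_e$, and $\Delta^{(2)}(\delta_e)=\sum_{hkl=e}\delta_h\ot\delta_k\ot\delta_l$ is invariant under cyclic shifts (since $hkl=e$ if and only if $klh=e$) but not under transposing the first two factors when $G$ is non-abelian. Cocommutativity of an element yields exactly invariance of its iterated coproducts under \emph{cyclic} shifts, which is what you in fact use, so the argument survives. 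Second, the identity $S(\lambda_p)=\lambda_p$ cannot be applied to a single Sweedler factor, so it does not by itself ``absorb'' the residual antipode sitting on an edge factor; what actually happens is that this antipode combines with the twist $\anti{?}{\plaqedge{p}{e}}$ built into the right $H_p^*$-action on $K_e^*$ to produce the twist $\anti{?}{-\plaqedge{p}{e}}$, again by $S^2=\id{}$. The resulting receiving map at each edge is then still independent of $v$, which --- as you correctly emphasize in the same sentence --- is all the argument requires.
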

\begin{proof}
The endomorphism $\tild{B}_{(p,v)}({\lambda_p}_{(1)} \ot S({\lambda_p}_{(2)}))$
is equal to the endomorphism of $\statespace$ obtained by acting with the Haar integral $\lambda$ via the left $H_p^*$-action ${B}'_{(p,v)}$ on $\statespace$ that is the pullback of the left $(H_p^* \ot (H_p^*)^\opp)$-action $\tild{B}_{(p,v)}$ along the algebra map $(\id{H_p^*} \ot S) \circ \Delta : H_p^* \lto H_p^* \ot (H_p^*)^\opp$.
Next we observe that the action ${B}'_{(p,v)}$ is independent of $v$ for any cocommutative element $\lambda$ of the Hopf algebra $H_p^*$.
Indeed, looking carefully at Definition \ref{def:plaquette-bimodule}, we extract from it that ${B}'_{(p,v)}(\lambda)$ acts with the multiple coproduct of $\lambda$ on the degrees of freedom of $\statespace$ in the boundary of the plaquette $p$ in a cyclic order starting at the vertex $v$.
Therefore, for a different vertex $v' \in \plaqsites{p}$, the endomorphism ${B}'_{(p,v')}(\lambda)$ will only differ by a cyclic shift in the multiple coproduct of $\lambda$.
But since $\lambda$ is cocommutative, any multiple coproduct of it is invariant under such cyclic shifts of its tensor factors.
\end{proof}
Thus we have shown that the following is well-defined.
\begin{definition} \label{def:plaquette-operator}
Let $p\in\Sigma^2$.
The \emph{plaquette operator} for the plaquette $p$ is the idempotent endomorphism of the state space $\statespace$
\begin{equation*}
B_p := \tild{B}_{(p,v)}({\lambda_p}_{(1)} \ot S({\lambda_p}_{(2)})) : \statespace \lto \statespace	
\end{equation*}
given by acting via the $H_p^* \ot (H_p^*)^\opp$-action $\tild{B}_{(p,v)}$ introduced in Definition \ref{def:plaquette-bimodule} with the unique symmetric separability idempotent ${\lambda_p}_{(1)} \ot S({\lambda_p}_{(2)}) \in H_p^* \ot (H_p^*)^\opp$ for $H_p^*$.
Here $\lambda_p \in H_p^*$ is the Haar integral for $H_p^*$.
\end{definition}

This operator is \emph{local} in the sense that it acts as the identity on all tensor factors in $\statespace = (\ot_{e\in\Sigma^1} K_e^*) \ot (\ot_{v\in\Sigma^0} Z_v)$ except for those associated to the edges $e\in\Sigma_p^1$ and the vertices $v \in \Sigma_p^0$ incident to the plaquette $p$.

We have thus defined a family of projectors $(A_v)_{v\in\Sigma^0}$ and $(B_p)_{p\in \Sigma^2}$ on the vector space $\statespace$.
We now finally reach our main result that they all commute with each other.

\begin{theorem} \label{thm:vertex-and-plaquette-operators-commute}
Let $\Sigma$ be an oriented compact surface with a regular cell decomposition labeled by semisimple Hopf algebras, semisimple bicomodule algebras and representations of the vertex algebras, and
let $\statespace$ be the associated vector space defined in Definition \ref{def:statespace} with vertex and plaquette operators $\{ (A_v)_{v\in\Sigma^0}, (B_p)_{p\in\Sigma^2} \}$ defined in Definitions \ref{def:vertex-operator} and \ref{def:plaquette-operator}.

Then any pair of vertex or plaquette operators commutes.
\end{theorem}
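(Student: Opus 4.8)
The plan is to reduce the claim to three types of pairs and to settle each one using the results already proved, so that all the genuine work has been front-loaded into Theorem~\ref{thm:straightening-formula} and Lemma~\ref{lem:symm-sep-idempotents-commute-in-drinfeld-double}. Two elementary principles organize the argument. First, if two of the bimodule actions from Definitions~\ref{def:vertex-bimodule} and~\ref{def:plaquette-bimodule} have commuting images in the endomorphisms of $\statespace$, then the operators obtained by evaluating them on fixed elements (here, separability idempotents) commute as well. Second, if two such actions jointly assemble into a single representation $\rho \colon C \ot C^\opp \lto \operatorname{End}(\statespace)$, then operators that arise as $\rho$ of two commuting elements of $C \ot C^\opp$ commute automatically, since $\rho$ is an algebra homomorphism. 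Recall that $A_v = \tild{A}_v(p_v^1 \ot p_v^2)$ and $B_p = \tild{B}_{(p,v)}({\lambda_p}_{(1)} \ot S({\lambda_p}_{(2)}))$.

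For two vertex operators $A_{v_1}, A_{v_2}$ with $v_1 \neq v_2$, the first bullet of Theorem~\ref{thm:straightening-formula} says $\tild{A}_{v_1}$ and $\tild{A}_{v_2}$ commute, so by the first principle $A_{v_1}$ and $A_{v_2}$ commute. For two plaquette operators $B_{p_1}, B_{p_2}$ with $p_1 \neq p_2$, the second bullet gives that $\tild{B}_{(p_1, v_1)}$ and $\tild{B}_{(p_2, v_2)}$ commute for every choice of incident sites; since $B_p$ is independent of the chosen site, $B_{p_1}$ and $B_{p_2}$ commute. The cases $v_1 = v_2$ and $p_1 = p_2$ are trivial.

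The substantive pair is a vertex operator $A_v$ together with a plaquette operator $B_p$. If $v$ is not a vertex of $p$, then $A_v$ acts nontrivially only on $Z_v$ and on the factors $K_e^*$ with $e \in \Sigma_v^1$, while $B_p$ acts nontrivially only on the $Z_w$ with $w \in \Sigma_p^0$ and the $K_e^*$ with $e \in \Sigma_p^1$; as every edge in $\Sigma_p^1$ has both endpoints among the vertices of $p$, these sets of factors are disjoint and the operators commute. If instead $v \in \Sigma_p^0$, use site-independence to represent $B_p$ by $\tild{B}_{(p,v)}$ for the site at $v$. By the third bullet of Theorem~\ref{thm:straightening-formula}, $\tild{A}_v$ and $\tild{B}_{(p,v)}$ compose to a representation $\rho$ of $C \ot C^\opp$ on $\statespace$ with $C = \vtxal{H_p^*}{K_{\halfedges{v}}}$. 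Under $\rho$, the operator $A_v$ is the image of $\alpha := (1 \ot p_v^1) \ot (1 \ot p_v^2)$ and $B_p$ the image of $\beta := (\pi^1 \ot 1) \ot (\pi^2 \ot 1)$, where $\pi^1 \ot \pi^2 = {\lambda_p}_{(1)} \ot S({\lambda_p}_{(2)})$ is the symmetric separability idempotent of $H_p^*$. Applying Lemma~\ref{lem:symm-sep-idempotents-commute-in-drinfeld-double} with $H = (H_p^{\eps'})^\coopp \ot H_p^\eps$, $A = H_p^*$ and $K = K_{\halfedges{v}}$ (all semisimple, so that the idempotents exist and are unique) gives $\alpha\beta = \beta\alpha$ in $C \ot C^\opp$, whence $A_v B_p = \rho(\alpha)\rho(\beta) = \rho(\beta)\rho(\alpha) = B_p A_v$ by the second principle.

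The main obstacle is precisely this last case. Its heart is not a computation but the recognition that the full operators $A_v$ and $B_p$ --- not merely their restrictions to the overlapping degrees of freedom --- are exactly $\rho(\alpha)$ and $\rho(\beta)$ for the global joint action $\rho$ of Theorem~\ref{thm:straightening-formula}, together with the verification that the hypotheses of Lemma~\ref{lem:symm-sep-idempotents-commute-in-drinfeld-double} are met by the balancing Hopf algebra $(H_p^{\eps'})^\coopp \ot H_p^\eps$. A secondary delicacy, permitted under regularity, is that $v$ may be incident to $p$ through several sites; one then runs the same argument at each site, using that the balancing algebras of distinct sites of $v$ are commuting tensor factors of $H_{\vtxsites{v}}^*$, each coupling to the shared idempotent $p_v^1 \ot p_v^2$ of $K_{\halfedges{v}}$ via the Lemma, and that regularity guarantees the two edges bounding each site have distinct underlying edges so that the straightening formula applies.
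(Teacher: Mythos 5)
Your proposal is correct and takes essentially the same approach as the paper: the case analysis is organized by Theorem~\ref{thm:straightening-formula}, and the incident vertex--plaquette pair is settled by combining its third bullet (the joint crossed-product bimodule structure) with Lemma~\ref{lem:symm-sep-idempotents-commute-in-drinfeld-double} applied to $H=(H_p^{\eps'})^\coopp\ot H_p^\eps$, $A=H_p^*$, $K=K_{\halfedges{v}}$. The paper's own proof is just a terser version of yours---it leaves the disjoint-support argument for non-incident pairs and the identification $A_v=\rho(\alpha)$, $B_p=\rho(\beta)$ implicit---and your closing ``several sites'' discussion is unnecessary, since the site-independence of $B_p$ lets you run the argument once, at the single site used to define it.
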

\begin{proof}
Due to Theorem \ref{thm:straightening-formula}, the only non-trivial commutation relations between a $K_\halfedges{v}$-action and an $H_p^*$-action on $\statespace$ may occur when $v$ and $p$ are incident to each other.
In that case, the $K_\halfedges{v}$-bimodule structure $\tild{A}_v$ and the $H_p^*$-bimodule structure $\tild{B}_{(p,v)}$ together form a bimodule structure over the crossed product algebra $\vtxal{H_p^*}{K_\halfedges{v}}$.
However, due to Lemma \ref{lem:symm-sep-idempotents-commute-in-drinfeld-double} the symmetric separability idempotents for $K_\halfedges{v}$ and $H_p^*$ commute in $(\vtxal{H_p^*}{K_\halfedges{v}}) \ot (\vtxal{H_p^*}{K_\halfedges{v}})^\opp$ and, hence, the vertex operator $A_v$ and the plaquette operator $B_p$ commute with each other.
\end{proof}

This is completely analogous to the standard Kitaev model without defects:
We have a family of commuting projectors on the state space.
Since any family of commuting projectors is simultaneously diagonalizable, this allows for the definition of an exactly solvable Hamiltonian as the sum of commuting projectors.
We thus conclude our construction of the Kitaev lattice model with defects as follows:

\begin{definition}
The \emph{Hamiltonian} on the state space $\statespace$  assigned to an oriented surface $\Sigma$ with labeled cell decomposition as above is the diagonalizable endomorphism
\begin{equation*}
h := \sum_{v \in \Sigma^0} (\id{\statespace} - A_v) + \sum_{p \in \Sigma^2} (\id{\statespace} - B_p) : \statespace \lto \statespace .
\end{equation*} 
The associated \emph{ground-state space} is its kernel,
\begin{equation*} \label{eq:ground-state-space}
\statespace_0 := \ker h ,
\end{equation*}
i.e.\! the simultaneous $1$-eigenspace for all the projectors $\{ (A_v)_{v\in\Sigma^0}, (B_p)_{p\in\Sigma^2} \}$.

Such a Hamiltonian is also called \emph{frustration-free}, as its lowest eigenvalue is not lower than any eigenvalue of its summands.
\end{definition}

\begin{remark}
The ground-state space $\statespace_0$ is isomorphic to the vector space that is category-theoretically realized by the modular functor constructed in \cite{fss} for the defect surface $\Sigma$ labeled by the corresponding representation categories of the Hopf algebras and bicomodule algebras.
We leave the detailed proof of this statement for a future update of this paper.

As a consequence, the ground-state space $\statespace_0$ is invariant under fusion of defects and independent of the transparently labeled part of the cell decomposition.
Moreover, due to the results of \cite{fss}, there will be a mapping class group action on $\statespace_0$ that can be explicitly computed.
This allows to define quantum gates on the ground-state space in terms of the mapping class group action, as has been proposed before, and to address questions of universality of such gates.
We have thus constructed an explicit Hamiltonian model which offers the possibility for quantum computation, realizing a general framework for theories of the type discussed e.g.\! in \cite{barkeshli}.
\end{remark}
A detailed investigation of the above and related questions remain for future work.

\pagebreak

\appendix

\section{A category-theoretic motivation for the vertex algebras} \label{app:gluing-category}

The construction in this paper takes as its input a compact oriented surface $\Sigma$, whose $2$-cells are labelled by
Hopf algebras and whose $1$-cells are labelled by
bicomodule algebras.
Furthermore, we have introduced in Definition \ref{def:vertex-algebra}, for every vertex $v \in \Sigma^0$, an algebra $C_v$, which we call vertex algebra.
The category of possible labels for a vertex $v \in \Sigma^0$ of the cell decomposition is the category of modules over the relevant vertex algebra $C_v$, see Definition \ref{def:rep-category-of-labels-at-vertex}.

On the other hand, in three-dimensional topological field theories and modular functors defined on surfaces with defects such as in \cite{fss, kitaevKong}, the strata are labelled by category-theoretic data: $2$-cells by finite tensor categories and $1$-cells by finite bimodule categories, which in our setting arise as the representation categories of the Hopf algebras and bicomodule algebras that we use as labels for our construction.

Furthermore, in \cite{fss}, a category is assigned to any boundary circle of a surface with defects, which is equivalent to a Drinfeld center in the absence of defects.
Such a boundary circle can be intersected by defect lines labelled by bimodule categories, leading to marked points on the circle.
In our construction this situation corresponds to a vertex $v \in \Sigma^0$ at which a number of edges labelled by bicomodule algebras meet.
We can regard such a vertex as a boundary circle $\mathbb{L}_v$, cut into the surface $\Sigma$, at which defect lines end which are labelled by the representation categories of the corresponding bicomodule algebras.

The main result of this section, Theorem \ref{thm:gluing-category-as-representation-category}, is that the category assigned to such a decorated circle with marked points $\mathbb{L}_v$ according to the prescription of \cite{fss}, defined in Definition \ref{def:gluing-category}, is canonically isomorphic to the category of labels that we have defined in Definition \ref{def:rep-category-of-labels-at-vertex} for such a vertex $v \in \Sigma^0$ in a labeled cell decomposition.		\\

\noindent First we must explain the category that is assigned to a boundary circle of a defect surface in the  construction of \cite{fss}.
For the category-theoretic background, see also \cite{egno}.
We adapt the notions and notation to our setting, since it slightly differs from the one in \cite{fss}.
Here, the tensor categories we consider are pivotal and the underlying defect surface is oriented, whereas in the reference no pivotal structures are used and instead the surfaces are framed.	\\

\noindent For a tensor category $\A$ and a sign $\eps \in \{ +1, -1 \}$, write \[
\A^\eps := \begin{cases}
\A, & \text{ if }\eps=+1,	\\
\overline{\A}, & \text{ if }\eps=-1,	
\end{cases}						\]
where $\overline{\A} := \A^{\opp,\mopp}$ is the tensor category whose underlying linear category is the opposite category of $\A$ and whose tensor product is also opposite to the one of $\A$, i.e. $\overline{a} \ot \overline{b} := \overline{b \ot a}$ for $a, b \in \A$ , where for any object $a \in \A$ we denote its corresponding object in the opposite category $\overline{\A}$ by $\overline{a}$, and likewise for morphisms.
If $\A = \lmod{H}$ for a Hopf algebra $H$, then $\overline{\A} \cong \lmod{\overline{H}}$ canonically as tensor categories, where $\overline{H} := H^{\opcop}$ is the Hopf algebra that has the opposite multiplication as well as the opposite co-multiplication with respect to $H$.
For $X \in \lmod{H}$, the corresponding object $\overline{X}$ in $\lmod{\overline{H}}$ is given by the vector space dual $\Hom{\kk}{X,\kk}$ of $X$ with the natural induced $\overline{H}$-action.
For $\eps \in \{+1,-1\}$, we also write $H^\eps := \overline{H}$ if $\eps = -1$, and $H^\eps := H$ if $\eps=+1$.

The right duality functor induces a monoidal equivalence, $\A \lto \overline{\A}, x \lmapsto \overline{x^\vee}$.
For $\A = \lmod{H}$ for a Hopf algebra $H$, this equivalence takes an $H$-module $X$ and turns it into an $\overline{H}$-module by pulling back the $H$-action along the antipode $S : \overline{H} \lto H$.
Note that instead of the right dual functor one can also take any other odd-fold right or left dual.
For our purposes this choice does not matter, since the tensor categories which we will consider are pivotal, where all these odd-fold duals are canonically identified.
Indeed, for a semisimple Hopf algebra $H$, the antipode is involutive, so that all odd powers of the antipode are the same.
(This is in contrast to \cite{fss} where no pivotal structures on the tensor categories are used, but instead $2$-framings on the underlying surfaces are used to determine which multiple of the duality functor to use in a given moment in the construction.)

If $\A_1$ and $\A_2$ are two tensor categories and $\M$ is an $\A_1$-$\A_2$-bimodule category, then the opposite linear category $\overline{\M} := \M^\opp$ canonically becomes an $\overline{\A_2}$-$\overline{\A_1}$-bimodule category by defining $\overline{a_2} \lact \overline{m} \ract \overline{a_1} := \overline{a_1 \lact m \ract a_2}$ for $a_1 \in \A_1$, $m \in \M$, $a_2 \in \A_2$ and likewise for morphisms.
For $\eps \in \{ +1, -1 \}$, we write \[
\M^\eps := \begin{cases}
\M \text{ as an $\A_1$-$\A_2$-bimodule category}, & \text{ if }\eps=+1,	\\
\overline{\M} \text{ as an $\overline{\A_2}$-$\overline{\A_1}$-bimodule category}, & \text{ if }\eps=-1.	
\end{cases}						\]
If $\M = \lmod{K}$ for an $H_1$-$H_2$-bicomodule algebra $K$, then $\overline{\M} \cong \lmod{\overline{K}}$ canonically as $(\lmod{H_1})$-$(\lmod{H_2})$-bimodule categories, where $\overline{K} := K^{\opp}$ is the opposite algebra with respect to $K$ considered as an $\overline{H_2}$-$\overline{H_1}$-bicomodule algebra.
For $M \in \lmod{K}$, the corresponding object $\overline{M}$ in $\lmod{\overline{K}}$ is given by the vector space dual $\Hom{\kk}{M,\kk}$ of $M$ with the natural induced $\overline{K}$-action.
For $\eps \in \{+1,-1\}$, we also write $K^\eps := \overline{K}$ if $\eps = -1$, and $K^\eps := K$ if $\eps=+1$. \\

\noindent A boundary circle of an oriented surface with defect lines labeled by bimodule categories gives rise to the following data.
Consider an oriented circle with $n$ marked points $(e_i)_{i \in \ZZ_n}$ that are each labelled with a sign $\eps_i \in \{ +1, -1 \}$, so that we call these points \emph{oriented}.
Label each segment between two marked points $e_i$ and $e_{i+1}$ by a finite pivotal tensor category $\A_{i,i+1}$ and label each marked point $e_i$ with a finite bimodule category $\M_i$, which is an $\A_{i-1,i}$-$\A_{i,i+1}$-bimodule category if $\eps_i = +1$, and an $\A_{i+1,i}$-$\A_{i,i-1}$-bimodule category if $\eps_i = -1$.
In other words, then $\M_i^{\eps_i}$ is an $\A_{i-1,i}^{\eps_i}$-$\A_{i,i+1}^{\eps_i}$-bimodule category, using the notation we have introduced above for opposite tensor categories and opposite bimodule categories.
The set $(\M_i^{\eps_i})_{i \in \ZZ_n}$ is called a \emph{string of cyclically composable bimodule categories}, according to \cite{fss}.

To this decorated circle with marked points, by the prescription of \cite{fss}, one associates a linear category, which we will explain now, see Definition \ref{def:gluing-category}.
First we consider the Deligne product $\M_1^{\eps_1} \boxt \cdots \boxt \M_n^{\eps_n}$ of the categories $(\M_i^{\eps_i})_{i \in \ZZ_n}$.
Following the above notation, corresponding to each segment between two marked points $e_i$ and $e_{i+1}$ in the circle there is the structure of an $\A_{i,i+1}^{\eps_{i+1}}$-$\A_{i,i+1}^{\eps_i}$-bimodule category on this Deligne product.
These $n$ bimodule category structures on the Deligne product commute with each other (up to canonical  coherent isomorphisms), since they act either on different Deligne factors or on two different sides of one of the bimodule categories.

For each of these bimodule category structures on the Deligne product we can consider so-called \emph{balancings}; e.g.\ for a $\boxt$-factorized object $(\opj{m_1}{\eps_1} \boxt \cdots \boxt \opj{m_n}{\eps_n})$ these are natural isomorphisms $( \opj{m_1}{\eps_1} \boxt \cdots \boxt \opj{m_i}{\eps_i} \boxt (\opj{a^{\eps_{i+1}}}{\eps_{i+1}} \lact \opj{m_{i+1}}{\eps_{i+1}}) \boxt \cdots \boxt \opj{m_n}{\eps_n} \lto \opj{m_1}{\eps_1} \boxt \cdots \boxt (\opj{m_i}{\eps_i} \ract \opj{a^{\eps_i}}{\eps_i}) \boxt \opj{m_{i+1}}{\eps_{i+1}} \boxt \cdots \boxt \opj{m_n}{\eps_n} )_{a \in \A_{i,i+1}}$
Here, for any category $\X$ and $\eps \in \{ +1, -1 \}$, we use the notation \[
\opj{x}{\eps} := \begin{cases}
x \in \X, & \text{ if }\eps=+1,	\\
\overline{x} \in \overline{\X}, & \text{ if }\eps=-1.	
\end{cases}						\]
for the object in $\X^\eps$ that corresponds to the object $x \in \X$, and for a pivotal tensor category $\A$ we use the notation \[
a^\eps := \begin{cases}
a , & \text{ if }\eps=+1,	\\
a^\vee , & \text{ if }\eps=-1.	
\end{cases}						\]
(While this notation would make sense for any tensor category that is not necessarily pivotal, it would be unnatural as it would arguably favor the right dual functor over all other odd-fold duals.
Therefore we assume that $\A$ is pivotal, which is the case of our interest anyway.)

Let us recall the general definition of such balancings for bimodule categories.

\begin{definition} \label{def:category-of-balancings}
Let $\A$ be a pivotal tensor category, let $\eps, \eps' \in \{ +1, -1 \}$ and let $\M$ be an $\A^\eps$-$\A^{\eps'}$-bimodule category.

Then the \emph{category $\balcat{\M}{\eps}{\eps'}$ of
	balancings in $\M$} has as objects pairs $(m,\beta)$, where $m$ is an object of $\M$ and the \emph{balancing} $(\beta_a : \opj{\dual{a}{\eps}}{\eps} \lact m \lsimto m \ract \opj{\dual{a}{\eps'}}{\eps'})_{a\in\A}$ is a natural isomorphism satisfying 
	\begin{center}
		\begin{tikzpicture}
		\matrix (m) [matrix of math nodes,row sep=2em,column sep=2em,minimum width=2em]{
			\opj{\dual{(a \ot b)}{\eps}}{\eps} \lact m \cong \opj{\dual{a}{\eps}}{\eps} \lact \opj{\dual{b}{\eps}}{\eps} \lact m & & \\
			& & \opj{\dual{a}{\eps}}{\eps} \lact m \ract \opj{\dual{b}{\eps'}}{\eps'} \\
			m \ract \opj{\dual{(a \ot b)}{\eps'}}{\eps'} \cong m \ract \opj{\dual{a}{\eps'}}{\eps'} \ract \opj{\dual{b}{\eps'}}{\eps'} & & \\
		};
		\path[-stealth]
		(m-1-1) edge node[above, xshift=20pt] {$\id{\opj{\dual{a}{\eps}}{\eps}} \lact \beta_{b}$} (m-2-3)
		edge node[left] {$\beta_{a \ot b}$} (m-3-1)
		(m-2-3) edge node[below, xshift=20pt] {$\beta_{a} \ract \id{\opj{\dual{b}{\eps'}}{\eps'}}$} (m-3-1)
		;
		\end{tikzpicture} \ \ \ \ \ \ \ \ \ 
		\begin{tikzpicture}
		\matrix (m) [matrix of math nodes,row sep=2em,column sep=2em,minimum width=2em]{
			\opj{\dual{\II}{\eps}}{\eps} \lact m & & \\
			& & m \\
			m \ract \opj{\dual{\II}{\eps'}}{\eps'} & & \\
		};
		\path[-stealth]
		(m-1-1) edge node[above] {$\cong$} (m-2-3)
		edge node[left] {$\beta_{\II}$} (m-3-1)
		(m-2-3) edge node[below] {$\cong$} (m-3-1)
		;
		\end{tikzpicture}
	\end{center}
	or, in formulas,
	\begin{align} \beta_{a \ot b} &= (\beta_{a} \ract \id{\opj{\dual{b}{\eps'}}{\eps'}}) \circ (\id{\opj{\dual{a}{\eps}}{\eps}} \lact \beta_{b}) \qquad \forall a,b \in \A,	\label{eq:hexagon-for-balancing}	\\
	\beta_{\II} &= \id{m} ,	\label{eq:triangle-for-balancing}
	\end{align}
	where we have omitted the bimodule constraint isomorphisms.
	
	The morphisms in the category of balancings are defined to be the morphisms in $\M$ that are compatible with the balancings.
\end{definition}
\begin{remark}
While this definition does not require any pivotal structure on the tensor category --  one can consider every dual to be the right dual, for example -- we will consider it only for a pivotal tensor category, since otherwise it would not coincide with the definition of the category of $\kappa$-balancings from \cite{fss} for an integer $\kappa \in \ZZ$.
In the construction in \cite{fss} this integer comes from a framing of the underlying surface and determines which of the various multiples of the double-dual functor, which are trivialised by a pivotal structure, we would need to insert in the above definition.
\end{remark}

The category that one finally assigns to the decorated circle with marked points, according to the prescription of \cite{fss} is as follows:

\begin{definition}[c.f.\ Definition~3.4 in \cite{fss}] \label{def:gluing-category}
Let $\mathbb{L}$ be an oriented circle with marked oriented points $\{ e_i \}_{i\in \ZZ_n}$ labelled by bimodule categories -- giving rise to a string $(\M_i^{\eps_i})_{i\in\ZZ_n}$ of cyclically composable bimodule categories.
The \emph{category $\textup{T}(\mathbb{L})$ assigned to the circle $\mathbb{L}$} is the category of balancings on the Deligne product $(\boxt_{i \in \ZZ_n} \M_i^{\eps_i})$ with respect to the $\A_{i,i+1}^{\eps_{i+1}}$-$\A_{i,i+1}^{\eps_i}$-bimodule category structures for all $i \in \ZZ_n$.
In formulas, \begin{equation}	\label{eq:definition-gluing-category}
\textup{T}(\mathbb{L}) := \balcat{\cdots \balcat{\boxt_{i \in \ZZ_n} \M_i^{\eps_i}}{\eps_2}{\eps_1}}{\eps_1}{\eps_n} .
\end{equation}
\end{definition}
\begin{remarks}~ \begin{itemize}
\item
This category is well-defined because the bimodule category structures on the Deligne product, with respect to which the balancings are considered, all commute with each other (up to canonical coherent natural isomorphisms).
In \cite{fss} it is explained that the category of balancings is monadic and that the monads for the balancings for the different bimodule category structures on the Deligne product satisfy a distributivity law, which also shows that \eqref{eq:definition-gluing-category} does not depend on the order in which we consider the balancings.
\item
The category assigned to a decorated circle with marked points reduces to the well-known Drinfeld center $\catcenter(\A)$, as shown in \cite{fss}, if all bimodule categories $\M_i$ are given by a single tensor category $\A$.
\end{itemize}
\end{remarks}

In Theorem \ref{thm:gluing-category-as-representation-category} we want to give a realization of such a category assigned to a decorated circle with marked points, in terms of representations of a $\kk$-algebra, namely the vertex algebra $C_v$, if the bimodule categories $(\M_i)_i$ are the representation categories of bicomodule algebras $(K_e)_{e\in\halfedges{v}}$.

To this end, we first show generally that the category of balancings, as in Definition \ref{def:category-of-balancings}, can be realized in such a representation-theoretic way.
For this, let $H$ be a
finite-dimensional Hopf algebra over $\kk$, let $\eps, \eps' \in \{+1, -1\}$ and let $K$ be an $H^\eps$-$H^{\eps'}$-bicomodule algebra.
Recall from Subsubsection \ref{subsubsec:tannaka-duality} that the category $\lmod{K}$ is an $H^\eps$-$H^{\eps'}$-bimodule category, so that we can consider the category of balancings $\balcat{\lmod{K}}{\eps}{\eps'}$ as defined in Definition \ref{def:category-of-balancings}.
On the other hand, recall from Definition \ref{def:balancing-algebra-hopf} the so-called balancing algebra $H^*_{\eps,\eps'}$, which is an $((H^{\eps'})^\coopp \ot H^\eps)$-module algebra, and recall from Definition \ref{def:crossed-product-algebra} the crossed product algebra $\vtxal{H^*_{\eps,\eps'}}{K}$, for which we consider $K$ as an $((H^{\eps'})^\coopp \ot H^\eps)$-comodule algebra.
This $\kk$-algebra $\vtxal{H^*_{\eps,\eps'}}{K}$ with underlying vector space $H^* \ot K$ is characterized by having $H^*$ and $K$ as subalgebras, and by the following instance of the straightening formula for the multiplication of an element $f \in H^*$ with an element $k \in K$:
\begin{equation} \label{eq:straightening-formula-most-general-balancing}
k \cdot f = f(\anti{k_{(1)}}{-\eps'} \cdot ? \cdot \anti{k_{(-1)}}{\eps}) \cdot k_{(0)}
\end{equation}

The following proposition proves that the category of balancings on $\lmod{K}$ is isomorphic to the representation category of the $\kk$-algebra $\vtxal{H^*_{\eps,\eps'}}{K}$.
This justifies the name ``balancing algebra'' for $H^*_{\eps,\eps'}$ and will be used in Theorem \ref{thm:gluing-category-as-representation-category}
to establish a connection between the vertex algebras defined in this paper and the categories assigned to circles in \cite{fss}.

\begin{proposition} \label{prop:category-of-balancings-as-representation-category}
Let $H$ be a semisimple
finite-dimensional Hopf algebra over $\kk$, let $\eps, \eps' \in \{+1, -1\}$ and let $K$ be an $H^\eps$-$H^{\eps'}$-bicomodule algebra.
Then there is a canonical equivalence of $\kk$-linear categories
\[ \balcat{\lmod{K}}{\eps}{\eps'} \cong \lmod{(H_{\eps,\eps'}^* \ogreaterthan K)}. \] 
\end{proposition}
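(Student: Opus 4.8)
The plan is to exhibit a pair of mutually inverse functors between $\balcat{\lmod{K}}{\eps}{\eps'}$ and $\lmod{(H^*_{\eps,\eps'} \ogreaterthan K)}$, both of which are the identity on the underlying $K$-modules; this gives in fact an isomorphism of $\kk$-linear categories, hence the asserted equivalence. The conceptual heart is a Tannaka--Krein / Yoneda reconstruction: since $H$ is finite-dimensional, a balancing $(\beta_a)_{a \in \A}$ -- a whole natural family of $K$-linear isomorphisms indexed by the objects of $\A = \lmod{H}$ -- carries no more information than a single piece of linear data on the underlying module $M$, namely an $H$-comodule structure, equivalently (by finite-dimensionality) a left $H^*$-action. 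The two module structures on $M$, by $K$ and by $H^*$, will then be shown to satisfy exactly the straightening relation \eqref{eq:straightening-formula-most-general-balancing}, so that together they amount to a module over the crossed product $H^*_{\eps,\eps'} \ogreaterthan K$ of Definition \ref{def:crossed-product-algebra}.

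To pass from a balancing to a module I would first extract the $H^*$-action. Evaluating $\beta$ on the regular object of $\A$ and composing with the unit of $H$ produces a coaction $M \lto M \ot H$; the bar-and-dual decorations $\opj{\dual{a}{\eps}}{\eps}$ and $\opj{\dual{a}{\eps'}}{\eps'}$ appearing in Definition \ref{def:category-of-balancings} introduce precisely the twists $\anti{?}{\eps}$ and $\anti{?}{-\eps'}$ occurring in the balancing-algebra action of Definition \ref{def:balancing-algebra-hopf}. I would then check that the hexagon axiom \eqref{eq:hexagon-for-balancing} is equivalent to coassociativity of the coaction and the triangle axiom \eqref{eq:triangle-for-balancing} to its counitality, so the extracted datum is a genuine $H^*$-action. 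Finally -- and this is where the $K$-linearity of each $\beta_a$ enters -- I would verify that the explicit left and right coactions defining the bimodule structure of $\lmod{K}$ (as recalled in Subsubsection \ref{subsubsec:tannaka-duality}) force the compatibility $k.(f.m) = f(\anti{k_{(1)}}{-\eps'} \cdot ? \cdot \anti{k_{(-1)}}{\eps}).(k_{(0)}.m)$ between the two actions, which is the module form of \eqref{eq:straightening-formula-most-general-balancing}.

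The inverse functor runs the other way. Starting from an $H^*_{\eps,\eps'} \ogreaterthan K$-module $N$, I would restrict along $K \hookrightarrow H^*_{\eps,\eps'} \ogreaterthan K$ to obtain the underlying $K$-module, reinterpret the $H^*$-action as an $H$-coaction, and define, for each $a \in \A$, the map $\beta_a : \opj{\dual{a}{\eps}}{\eps} \lact N \to N \ract \opj{\dual{a}{\eps'}}{\eps'}$ in terms of this coaction and the $H$-actions on $a$. Naturality in $a$ is then automatic, $K$-linearity of $\beta_a$ follows from the straightening relation read in reverse, and the hexagon and triangle identities follow from coassociativity and counitality. One checks that the two assignments are mutually inverse on objects and on morphisms: a morphism of balancings is by definition a $K$-linear map commuting with the $\beta$'s, which is exactly a simultaneously $H^*$-linear and $K$-linear map, i.e.\ a morphism of $H^*_{\eps,\eps'} \ogreaterthan K$-modules.

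The main obstacle I anticipate is not any single hard step but the sign bookkeeping across the four cases $(\eps,\eps') \in \{+1,-1\}^2$: one must track how the bar-and-dual functors $\opj{\dual{a}{\eps}}{\eps}$, the twists $\anti{?}{\eps}$, and the involutivity $S^2 = \id{H}$ of the antipode of the semisimple Hopf algebra $H$ conspire so that the balancing axioms reproduce the crossed-product relations on the nose. The reconstruction step itself -- identifying a $K$-linear natural family over all of $\lmod{H}$ with a single coaction -- is the other delicate point, and it is exactly where finite-dimensionality of $H$ is used, via the regular object generating $\A$ and the identification $H \cong (H^*)^*$.
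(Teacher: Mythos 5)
Your proposal is correct and follows essentially the same route as the paper's proof: extracting the $H^*$-action (equivalently the $H$-coaction) by evaluating the balancing at the regular object $H_\reg$ at $1_H$, matching the hexagon and triangle axioms with the module (co)algebra axioms, identifying $K$-linearity of the $\beta_a$ with the straightening formula \eqref{eq:straightening-formula-most-general-balancing}, and inverting via $\beta_X(x \ot m) = m_{(0)} \ot m_{(1)}.x$ (written in the paper with dual bases as $\sum_i e^i.m \ot e_i.x$). The only cosmetic difference is that the paper phrases the extracted datum as a left $H^*$-action and verifies its axioms directly using naturality of $\beta$, rather than speaking of coassociativity and counitality of a coaction.
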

\begin{proof}
Let $(M, \beta = (\beta_{X} : \opmod{\dual{X}{\eps}}{\eps} \lact M \lsimto M \ract \opmod{\dual{X}{\eps'}}{\eps'})_{X\in \lmod{H}})$ be an object in $\balcat{\lmod{K}}{\eps}{\eps'}$.
Recall that the vector spaces underlying the modules $\opmod{X^\eps}{\eps} \in \lmod{H^\eps}$ and $\opmod{X^{\eps'}}{\eps'} \in \lmod{H^{\eps'}}$ are the same as $X \in \lmod{H}$.
In this proof, to simplify notation, we will often write $\beta_{X}$ as a map $X \ot M \lto M \ot X$, keeping implicit the module structures on the respective vector spaces.

We define, using $\beta$, a left $H^*$-module structure on $M$ as follows.
We denote by $H_\reg \in \lmod{H}$ the left regular $H$-module with underlying vector space $H$, whose $H$-action is defined by left multiplication.
\begin{align} \label{eq:H^*-action-in-terms-of-balancing}
\rho : H^* \ot M &\lto M, \\
f \ot m &\lmapsto (\id{M} \ot f)\beta_{H_\reg}(1_H \ot m) \nonumber
\end{align}
We show that this indeed satisfies the axioms of a left $H^*$-module:
On the one hand we have, for $f,g \in H^*$ and $m \in M$,
\begin{align*}
\rho(f \ot \rho(g \ot m))
							&\eqbydef (\id{M} \ot f)\beta_{H_\reg}(1_H \ot (\id{M} \ot g)\beta_{H_\reg}(1_H \ot m))	\\
							&= (\id{M} \ot f \ot g)(\beta_{H_\reg} \ot \id{H})(\id{H} \ot \beta_{H_\reg})(1_H \ot 1_H \ot m) .
\end{align*}
On the other hand, we have
\begin{align*}
\rho((f\cdot g) \ot m) &= (\id{M} \ot (f\cdot g))\beta_{H_\reg}(1_H \ot m)	\\
								&= (\id{M} \ot f\ot g)(\id{M} \ot \Delta)\beta_{H_\reg}(1_H \ot m)	\\
								&\stackrel{\beta\text{ natural}}{=} (\id{M} \ot f\ot g)\beta_{H_\reg \ot H_\reg}(\Delta(1_H) \ot m)	\\
								&= (\id{M} \ot f\ot g)\beta_{H_\reg \ot H_\reg}(1_H \ot 1_H \ot m)	\\
&\stackrel{\eqref{eq:hexagon-for-balancing}}{=} (\id{M} \ot f \ot g)(\beta_{H_\reg} \ot \id{H})(\id{H} \ot \beta_{H_\reg})(1_H \ot 1_H \ot m) ,
\end{align*}
where we use in the third line that the coproduct of $H$ is an $H$-module morphism $\Delta : H_\reg \lto H_\reg \ot H_\reg$.
This shows one of the two axioms of an $H^*$-module.
For the other axiom, let again $m \in M$.
Then, indeed, we have
\begin{align*}
\rho(1_{H^*} \ot m) &= \rho(\eps \ot m)										\\
&\eqbydef (\id{M} \ot \eps)\beta_{H_\reg}(1_H \ot m)				\\
&\stackrel{\beta\text{ natural}}{=} \beta_\kk (\eps(1_H) \ot m) 	\\
&= m,
\end{align*}
where we use in the third line that the co-unit of $H$ is an $H$-module morphism $\eps : H_\reg \lto \kk$.
Hence, we have shown that $\rho$ endows $M$ with the structure of an $H^*$-module.

To prove that $(M, \rho)$ is an object of $\lmod{(\vtxal{H^*_{\eps,\eps'}}{K})}$ we have to show that the just defined $H^*$-action $\rho$ and the given $K$-action on $M$, which we simply denote by $K \ot M \to M, k \ot m \mapsto k.m$, satisfy the straightening formula \eqref{eq:straightening-formula-most-general-balancing}.
That is, we have to show that, for all $f \in H^*$, $k \in K$ and $m \in M$,
\begin{equation}
k . ((\id{M} \ot f)\beta_{H_\reg}(1_H \ot m)) = (\id{M} \ot f(\anti{k_{(1)}}{-\eps'} \cdot ? \cdot \anti{k_{(-1)}}{\eps}) )\beta_{H_\reg}(1_H \ot k_{(0)}.m)
\end{equation}
We start with the right-hand side:
\begin{align*}
(\id{M} \ot f(\anti{k_{(1)}}{-\eps'} \cdot ? \cdot \anti{k_{(-1)}}{\eps}) )\beta_{H_\reg}(1_H \ot k_{(0)}.m)	
&\stackrel{\beta\text{ natural}}{=} (\id{M} \ot f(\anti{k_{(1)}}{-\eps'} \cdot ? ))\beta_{H_\reg}(\anti{k_{(-1)}}{\eps} \ot k_{(0)}.m)	\\
&\stackrel{\beta_{H_\reg}\text{ $K$-linear}}{=} ((k_{(0)} . ?) \ot f(\anti{k_{(2)}}{-\eps'} \anti{k_{(1)}}{\eps'} \cdot ? )) \beta_{H_\reg} (1_H \ot m)	\\
&= k.((\id{M} \ot f)\beta_{H_\reg}(1_H \ot m)).
\end{align*}
Here we use in the first line that right multiplication by any element $h\in H$ is an $H$-module morphism $(? \cdot h) : H_\reg \lto H_\reg$ for the left regular $H$-module $H_\reg$, and in the last line we use the defining property of the antipode of $H$.
This concludes the proof that $(M, \rho) \in \lmod{(\vtxal{H^*_{\eps,\eps'}}{K})}$. \\ \\

\noindent Conversely, assume that $M \in \lmod{(\vtxal{H^*_{\eps,\eps'}}{K})}$ and let us define on $M$ a balancing $\beta_X : \opmod{\dual{X}{\eps}}{\eps} \lact M \lto M \ract \opmod{\dual{X}{\eps'}}{\eps'}$ for all $X \in \lmod{H}$.
Denoting by $(e^i \in H^*)_i$ and $(e_i \in H)_i$ a pair of dual bases, we define
\begin{align*}
\beta_X : X \ot M &\lto M \ot X,	\\
x \ot m &\lmapsto \sum_i e^i.m \ot e_i.x,
\end{align*}
where $e_i . x$ refers to $X$ as an $H$-module, not $\opmod{X^{\eps'}}{\eps'}$ as an $H^{\eps'}$-module, even though we will show that $\beta_X$ is a $K$-module morphism $\opmod{\dual{X}{\eps}}{\eps} \lact M \lto M \ract \opmod{\dual{X}{\eps'}}{\eps'}$.
Indeed, for $k \in K, x \in X, m \in M$, we calculate
\begin{align*}
k . ( \beta_X( x \ot m) &\eqbydef \sum_i (k_{(0)} . e^i . m) \ot (\anti{k_{(1)}}{\eps'} . e_i . x)	\\
									&\stackrel{\eqref{eq:straightening-formula-most-general-balancing}}{=} \sum_i (e^i(\anti{k_{(1)}}{-\eps'} \cdot ? \cdot \anti{k_{(-1)}}{\eps}) . k_{(0)} . m) \ot (\anti{k_{(2)}}{\eps'} . e_i . x)	\\
									&= \sum_i (e^i . k_{(0)} . m) \ot (\anti{k_{(2)}}{\eps'} . \anti{k_{(1)}}{-\eps'} . e_i . \anti{k_{(-1)}}{\eps} . x)	\\
									&= \sum_i (e^i . k_{(0)} . m) \ot (e_i . \anti{k_{(-1)}}{\eps} . x)	\\
									&\eqbydef \beta_X( k. (x \ot m) )
\end{align*}
Furthermore, it can be seen directly that $(\beta_X)_{X \in \lmod{H}}$ is a natural family.
Indeed, for any $H$-module morphism $f : X \lto Y$ and $x \in X, m \in M$, we have $\beta_Y(f(x) \ot m) \eqbydef \sum_i e^i.m \ot e_i.(f(x)) =  \sum_i e^i.m \ot f(e_i.x) \eqbydef (\id{M}\ot f)\beta_X(x \ot m)$.

It remains to show that $(\beta_X)_{X \in \lmod{H}}$ satisfies axioms \eqref{eq:hexagon-for-balancing} and \eqref{eq:triangle-for-balancing}, i.e.\ $\beta_{X \ot Y} = (\beta_X \ot \id{Y})(\id{X} \ot \beta_Y)$ for all $X, Y \in \lmod{H}$ and $\beta_\kk = \id{M}$.

For the first identity, let $x\in X$, $y \in Y$ and $m \in M$.
Then on the one hand we have $\beta_{X \ot Y}(x \ot y \ot m) \eqbydef \sum_i e^i.m \ot e_i.(x \ot y) = \sum_i e^i.m \ot ({e_i}_{(1)}.x) \ot ({e_i}_{(2)}.y)$.
On the other hand, $(\beta_X \ot \id{Y})(\id{X} \ot \beta_Y)(x\ot y \ot m) \eqbydef \sum_{i, j} e^j . e^i . m \ot e_j . x \ot e_i . y = \sum_i e^i.m \ot ({e_i}_{(1)}.x) \ot ({e_i}_{(2)}.y),$ where the last identity uses that the multiplication of $H^*$ is defined as the dual of the co-multiplication of $H$.

In order to show \eqref{eq:triangle-for-balancing}, we use that the unit of $H^*$ is the co-unit $\eps : H \to \kk$ of $H$.
For $\lambda \in \kk$ and $m \in M$ we thus have $\beta_\kk (m \ot \lambda) \eqbydef \sum_i e^i.m \ot \eps(e_i) \lambda = 1_{H^*} . m = m$.	\\	\\
So far in this proof, we have shown that on $M \in \lmod{K}$ one can construct out of a balancing on $M$ an $H^*$-action such that $M$ becomes an $(\vtxal{H^*_{\eps,\eps'}}{K})$-module, and that conversely out of an $(\vtxal{H^*_{\eps,\eps'}}{K})$-module structure one can construct a balancing on $M \in \lmod{K}$.
To conclude the proof of the proposition we have to show that these two assignments are inverse to each other.

First, assume that $(M,\beta) \in \balcat{\lmod{K}}{\eps}{\eps'}$.
Consider the balancing $\beta'$ on $M$ that is constructed from the $H^*$-action on $M$ which in turn is constructed from $\beta$, as shown above.
For $X\in \lmod{H}$, $x\in X$ and $m \in M$ we have
\begin{align*}
\beta'_X(x \ot m) &\eqbydef \sum_i (\id{M} \ot e^i)\beta_{H_\reg}(1_H \ot m) \ot e_i.x	\\
						&= (\beta_{H_\reg}(1_H \ot m))_{(M)} \ot (\beta_{H_\reg}(1_H \ot m))_{(X)}.x	\\
						&\stackrel{\beta\text{ natural}}{=} \beta_{X}(x \ot m),
\end{align*}
where use the notation $(\beta_{H_\reg}(1_H \ot m))_{(M)} \ot (\beta_{H_\reg}(1_H \ot m))_{(X)} := \beta_{H_\reg}(1_H \ot m) \in M \ot X$, and in the third line we use that $(?.x) : H_\reg \lto X$ is an $H$-module morphism for any $x \in X$.

Finally, assume that $M \in \lmod{(\vtxal{H^*_{\eps,\eps'}}{K})}$ with $H^*$-action $\rho : H^* \ot M \lto M$.
Consider the $H^*$-action $\rho'$ on $M$ that is constructed from the balancing on $M$ which in turn is constructed from $\rho$, as shown above.
For $f \in H^*$ and $m \in M$ we then have
\begin{align*}
\rho'(f \ot m) \eqbydef \sum_i (\id{M} \ot f) (\rho(e^i \ot m) \ot e_i.1_H)
					= \sum_i \rho(e^i \ot m) f(e_i)
					= \rho(f \ot m),
\end{align*}
which concludes the proof of the proposition.
\end{proof}

Now, finally, we can prove the main result of this appendix.
Most of the work for this has already been done in the proof of Proposition \ref{prop:category-of-balancings-as-representation-category}.
Let $v \in \Sigma^0$ be a vertex of a labeled cell decomposition of $\Sigma$ so that $(K_e)_{e\in\halfedges{v}}$ are bicomodule algebras labelling the incident edges at $v$.
Let $\mathbb{L}_v$ be the corresponding circle with marked points which are labeled by cyclically composable bimodule categories $(\lmod{K_e})_{e\in\halfedges{v}}$.

\begin{theorem} \label{thm:gluing-category-as-representation-category}
Let $v\in\Sigma^0$ be a vertex in a labelled (as defined in Definition \ref{def:bicomodule-algebra}) cell decomposition of a compact oriented surface $\Sigma$.
There is a canonical equivalence of $\kk$-linear categories
\begin{equation*}
\textup{T}(\mathbb{L}_v) \cong \lmod{C_v}
\end{equation*}
between the category assigned by the modular functor $\textup{T}$, constructed in \cite{fss}, to the circle $\mathbb{L}_v$ with marked points corresponding to the half-edges incident to a vertex $v \in \Sigma^0$ and the representation category of the algebra $C_v$.
\end{theorem}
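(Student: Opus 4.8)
The plan is to deduce the equivalence by iterating Proposition \ref{prop:category-of-balancings-as-representation-category} once for each site $p\in\vtxsites{v}$, thereby building the crossed product $C_v = \vtxal{H_{\vtxsites{v}}^*}{K_{\halfedges{v}}}$ one balancing algebra $H_p^*$ at a time.

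First I would identify the Deligne product underlying $\textup{T}(\mathbb{L}_v)$ with a representation category. Each marked point $e\in\halfedges{v}$ is labelled by $(\lmod{K_e})^{\vertexedge{v}{e}} \cong \lmod{K_e^{\vertexedge{v}{e}}}$, using the canonical identification $\overline{\lmod{K}}\cong\lmod{K^\opp}$ recalled above; since the Deligne product of finite module categories is the module category of the tensor product of the underlying algebras, we obtain $\boxt_{e\in\halfedges{v}} \lmod{K_e^{\vertexedge{v}{e}}} \cong \lmod{K_{\halfedges{v}}}$. Under this identification, the bimodule category structure on the Deligne product attached to a site $p\in\vtxsites{v}$ becomes precisely the bimodule structure on $\lmod{K_{\halfedges{v}}}$ induced by the comodule structure of $K_{\{e_p,e'_p\}}$ over $(H_p^{\eps'})^\coopp \ot H_p^\eps$ from \eqref{eq:site-bicomodule-algebra}. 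Invoking the distributivity law recalled after Definition \ref{def:gluing-category}, an object of $\textup{T}(\mathbb{L}_v)$ is then a $K_{\halfedges{v}}$-module $M$ together with a family of mutually compatible balancings $(\beta^p)_{p\in\vtxsites{v}}$, one for each site.

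Next I would process the sites one at a time. Applying Proposition \ref{prop:category-of-balancings-as-representation-category} to the site $p$ -- with adjacent half-edges $e_p,e'_p$ and signs $\eps,\eps'$, so that $K_{\{e_p,e'_p\}}$ is a comodule algebra over $(H_p^{\eps'})^\coopp \ot H_p^\eps$ -- converts the balancing $\beta^p$ into a left $H_p^*$-action on $M$ which, together with the action of $K_{\{e_p,e'_p\}}$, satisfies the straightening formula \eqref{eq:straightening-formula}. Since $H_p^*$ interacts only with the tensor factors $K_{e_p}^\eps$ and $K_{e'_p}^{\eps'}$ and acts trivially on the remaining factors of $K_{\halfedges{v}}$, this realizes exactly the subalgebra inclusion \eqref{eq:site-subalgebra-of-vertex-algebra}, namely $\vtxal{H_p^*}{K_{\{e_p,e'_p\}}} \subseteq C_v$. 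Carrying this out for every site equips $M$ with an action of each balancing algebra $(H_p^*)_{p\in\vtxsites{v}}$ alongside its $K_{\halfedges{v}}$-action. To assemble these into a single $C_v$-module I would observe that each $H_p^*$-action satisfies its own straightening formula with $K_{\halfedges{v}}$, and that for distinct sites the actions of $H_{p_1}^*$ and $H_{p_2}^*$ commute -- categorically this is the commutativity of the balancings at different sites, and algebraically it is that $H_{p_1}^*$ and $H_{p_2}^*$ are commuting subalgebras of $C_v = \vtxal{H_{\vtxsites{v}}^*}{K_{\halfedges{v}}}$, the crossed product being formed over the tensor product of Hopf algebras \eqref{eq:tensor-product-of-balancing-algebras-around-vertex}. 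These are precisely the defining relations of $C_v$, so $M$ becomes a $C_v$-module; conversely, restricting a $C_v$-module to each $H_p^*$ and transporting it back through Proposition \ref{prop:category-of-balancings-as-representation-category} recovers the balancings, and the two constructions are functorial and mutually inverse, yielding the claimed equivalence $\textup{T}(\mathbb{L}_v)\cong\lmod{C_v}$.

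I expect the main obstacle to be the bookkeeping that makes the iterated balancing construction match the simultaneous crossed product. One must check that introducing the $H_{p_1}^*$-action at one site does not disturb the balancing at a later site $p_2$; equivalently, that the bicomodule structure over $H_{p_2}$ needed for the next application of Proposition \ref{prop:category-of-balancings-as-representation-category} extends trivially across the already-added factor $H_{p_1}^*$. This holds because $H_{p_1}^*$ is attached to a plaquette distinct from $p_2$ and is coinvariant for the $H_{p_2}$-coaction, and because the coactions for different sites commute, so the order in which the sites are processed is immaterial. Making this precise, together with tracking the balancing axioms \eqref{eq:hexagon-for-balancing} and \eqref{eq:triangle-for-balancing} simultaneously across all sites, is where the genuine care is required.
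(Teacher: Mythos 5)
Your proposal takes essentially the same route as the paper's proof: identify the Deligne product with $\lmod{(\bigot_{e \in \halfedges{v}} K_e^{\vertexedge{v}{e}})}$, then iterate Proposition \ref{prop:category-of-balancings-as-representation-category} once per site, using the fact that the coactions attached to distinct sites commute to make the iteration well-defined and order-independent, so that the resulting iterated crossed product is recognized as $C_v$. Your additional care about extending the coaction trivially across already-adjoined balancing-algebra factors is exactly what the paper compresses into its remark that the bicomodule structures for different sites commute.
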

\begin{proof}
Consider the bicomodule algebra $(\bigot_{e \in \halfedges{v}} K_e^{\vertexedge{v}{e}})$, which realizes the Deligne product $\boxt_{e\in\halfedges{v}} (\lmod{K_e})^{\vertexedge{v}{e}} = \lmod{(\bigot_{e \in \halfedges{v}} K_e^{\vertexedge{v}{e}})}$ as a representation category.
For each incident site $p \in \vtxsites{v}$, which corresponds to a segment between two marked points of the corresponding decorated circle $\mathbb{L}_v$ and is labeled by a Hopf algebra $H_p$, it has an $H_p^\vertexedge{v}{e_p}$-$H_p^{\vertexedge{v}{e'_p}}$-bicomodule structure, where $e_p$ and $e'_p$ are half-edges incident to $v$ in the boundary of the plaquette $p$, cf.\ Figure \ref{fig:site}.
Denote the sites in $\vtxsites{v}$ in clockwise order around $v$ by $(p_{1,2}, \dots, p_{n,1})$ and abbreviate $\vertexedge{v}{e_{p_{i,i+1}}} =: \eps_{i+1}$ and $\vertexedge{v}{e'_{p_{i,i+1}}} =: \eps_i$.
We then repeatedly apply Proposition \ref{prop:category-of-balancings-as-representation-category} for each of these $H_{p_{i,i+1}}^{\eps_{i+1}}$-$H_{p_{i,i+1}}^{\eps_i}$-bicomodule structures.
This is well-defined and does not depend on the order, since for different $p \in \vtxsites{v}$ the bicomodule structures commute with each other.
We hence obtain an equivalence of categories
\begin{align*}
\balcat{\cdots \balcat{\boxt_{e \in \halfedges{v}} (\lmod{K_e})^{\vertexedge{v}{e}}}{\eps_2}{\eps_1}}{\eps_1}{\eps_n} &\cong \lmod{\big( \vtxal{ \big( (H_{p_{n,1}})_{\eps_1,\eps_n}^*\ot\cdots\ot (H_{p_{1,2}})_{\eps_2,\eps_1}^* \big) }{(\bigot_{e \in \halfedges{v}} K_e^{\vertexedge{v}{e}})} \big) }	\\
&\eqbydef \lmod{C_v}	,
\end{align*}
which concludes the proof.
\end{proof}
\begin{remark}
Since the category of balancings reduces to the Drinfeld center $\catcenter(\A)$ if all bimodule categories $\M_i$ are given by a single tensor category $\A$, as shown in \cite{fss}, we see that also in our construction in case of only transparently labeled edges incident to the vertex $v$, the category of labels is the representation category of the Drinfeld double, just as in the Kitaev construction without defects, see e.g.\ \cite{balKir}.
\end{remark}

\pagebreak
%%%%%%%%%%%%%%%%%%%%%%%%%%%%%%%%%%%%%%%%%%%%%%%%%%%%%%%%%%%%%%%%%%%%%%%%%%%%


\begin{thebibliography}{dassollterei}
	
\bibitem[A]{aguiar} M. Aguiar, \textit{A note on strongly separable algebras}.
Colloquium on Homology and Representation Theory (Vaquerías, 1998).
Bol. Acad. Nac. Cienc. (Córdoba) 65 (2000), 51–60.
\verb!http://pi.math.cornell.edu/~maguiar/strongly.pdf!

\bibitem[BK]{balKir} B. Balsam and A. Kirillov Jr, \emph{Kitaev's Lattice Model and Turaev-Viro TQFTs}, arXiv:1206.2308 [math.QA]
	
\bibitem[BJQ]{barkeshli} M. Barkeshli, C.-M. Jian and X.-L. Qi, \textit{Genons, twist defects, and projective non-Abelian braiding statistics}.
Phys. Rev. B 87, 045130 (2013). arXiv:1208.4834 [cond-mat.str-el]

\bibitem[BW1]{barWest-spherical} J. W. Barrett and B. W. Westbury, \emph{Spherical categories}.
Adv. Math. 143 (1999), no. 2, 357–375.
	
\bibitem[BW2]{barWest-invariants} J. W. Barrett and B. W. Westbury, \emph{Invariants of piecewise-linear 3-manifolds}.
Trans. Amer. Math. Soc. 348 (1996), no. 10, 3997–4022.


\bibitem[BSW]{beigi-shor-whalen}
S. Beigi, P. W. Shor and D. Whalen, \textit{The Quantum Double Model with Boundary: Condensations and Symmetries}.
Communications in Mathematical Physics, Volume 306, Number 3, 663-694 (2011).
arXiv:1006.5479 [quant-ph]


\bibitem[BMD]{bombin+martin-delgado}
H. Bombin and M. A. Martin-Delgado, \textit{A Family of Non-Abelian Kitaev Models on a Lattice: Topological Confinement and Condensation}.
Phys. Rev. B 78, 115421 (2008).
arXiv:0712.0190 [cond-mat.str-el]



\bibitem[BK]{bravyi-kitaev}
S. Bravyi and A. Kitaev, \textit{Quantum codes on a lattice with boundary}.
arXiv:quant-ph/9811052


\bibitem[BMCA]{buerschaperEtAl}
O. Buerschaper, J. M. Mombelli, M. Christandl and M. Aguado, \textit{A hierarchy of topological tensor network states}. J. Math. Phys. 54, 012201 (2013).
arXiv:1007.5283 [cond-mat.str-el]

\bibitem[CMS]{carqueville-etal}
N. Carqueville, C. Meusburger and G. Schaumann, \textit{3-dimensional defect TQFTs and their tricategories}.
arXiv:1603.01171 [math.QA]



\bibitem[CCW]{cong-cheng-wang}
I. Cong, M. Cheng and Z. Wang, \textit{Hamiltonian and Algebraic Theories of Gapped Boundaries in Topological Phases of Matter}.
Comm. Math. Phys. 355 (2), 645-689 (2017).
arXiv:1707.04564 [cond-mat.str-el]

\bibitem[D]{deligne} P. Deligne, \textit{Catégories tannakiennes}.
The Grothendieck Festschrift, Vol. II, 111–195,
Progr. Math., 87, Birkhäuser Boston, Boston, MA, 1990.

\bibitem[EGNO]{egno}
P. Etingof, S. Gelaki, D. Nikshych and V. Ostrik,
\textit{Tensor categories}.
Mathematical Surveys and Monographs, 205. American Mathematical Society, Providence, RI, 2015.
\verb!http://www-math.mit.edu/~etingof/egnobookfinal.pdf!

\bibitem[FLW]{freedman-larsen-wang} M. H. Freedman, M. Larsen and Z. Wang,
\textit{A modular functor which is universal for quantum computation}.
Comm. Math. Phys. 227 (2002), no. 3, 605–622. arXiv:quant-ph/0001108

\bibitem[FSS14]{fss-trace}
J. Fuchs, G. Schaumann and C. Schweigert, \textit{A trace for bimodule categories}.
Appl. Categ. Structures 25 (2017), no. 2, 227–268. arXiv:1412.6968 [math.CT]

\bibitem[FSS19]{fss}
J. Fuchs, G. Schaumann and C. Schweigert, \textit{A modular functor from state sums for finite tensor categories and their bimodules}. arXiv:1911.06214 [math.QA]

\bibitem[FS]{bilayer}
J. Fuchs and C. Schweigert, \textit{A note on permutation twist defects in topological bilayer phases}.
Lett. Math. Phys. 104 (2014), no. 11, 1385–1405. arXiv:1310.1329 [hep-th]

\bibitem[FSV]{fsv12} J. Fuchs, C. Schweigert and A. Valentino,
\textit{Bicategories for boundary conditions and for surface defects in 3-d TFT}.
Comm. Math. Phys. 321 (2013), no. 2, 543–575. arXiv:1203.4568 [hep-th]

\bibitem[Ka]{kassel}
C. Kassel, \emph{Quantum Groups}. Graduate Texts in Mathematics, 155. Springer-Verlag, New York, 1995.

\bibitem[Kit]{kitaev}
A. Kitaev, \textit{Fault-tolerant quantum computation by anyons}.
Annals Phys. 303 (2003) 2-30.
arXiv:quant-ph/9707021

\bibitem[KK]{kitaevKong} A. Kitaev and L. Kong, \emph{Models for gapped boundaries and domain walls}.
Comm. Math. Phys. 313 (2012), no. 2, 351–373. arXiv:1104.5047 [cond-mat.str-el]
	
\bibitem[KMS]{kms}
V. Koppen, E. Meir and C. Schweigert, \textit{On isotypic decompositions for non-semisimple Hopf algebras}.
arXiv:1910.13161 [math.QA]

\bibitem[LLW]{laubscherEtAl}
K. Laubscher, D. Loss and J. R. Wootton, \textit{Universal quantum computation in the surface code using non-Abelian islands}.
Phys. Rev. A 100, 012338 (2019).
 arXiv:1811.06738v2 [quant-ph]

\bibitem[M]{meusburger} C. Meusburger, \emph{Kitaev lattice models as a Hopf algebra gauge theory}.
Comm. Math. Phys. 353 (2017), no. 1, 413–468.
arXiv:1607.01144 [math.QA]

\bibitem[Mo]{montgomery} S. Montgomery, \emph{Hopf Algebras and Their Actions on Rings}. CBMS Regional Conference Series in Mathematics, 82. Published for the Conference Board of the Mathematical Sciences, Washington, DC; by the Amer. Math. Soc., Providence, RI, 1993.

\end{thebibliography}
\end{document}